\documentclass[11pt,reqno]{amsart}

\oddsidemargin 3ex
\evensidemargin 3ex
\textheight 7.9in
\textwidth 6.0in

\usepackage{mathrsfs}


\usepackage{hyperref}

\usepackage{graphicx}

\usepackage{xcolor}


\usepackage{enumerate}
\usepackage{bm}

\newtheorem{theorem}{Theorem}[section]
\newtheorem{lemma}[theorem]{Lemma}
\newtheorem{proposition}[theorem]{Proposition}
\newtheorem{corollary}[theorem]{Corollary}

\theoremstyle{definition}

\theoremstyle{remark}
\newtheorem{remark}[theorem]{Remark}

\numberwithin{equation}{section}

\begin{document}

\title[The boundary case for the supercritical deformed HYM equation]{
The boundary case for the supercritical deformed Hermitian-Yang-Mills equation 
}

\author{Wei Sun}


\address{Institute of Mathematical Sciences, ShanghaiTech University, Shanghai, China}
\email{sunwei@shanghaitech.edu.cn}



\begin{abstract}

In this paper, we shall study the weak solution to the supercritical deformed Hermitian-Yang-Mills equation in the boundary case. 

\end{abstract}

\maketitle

\medskip
\section{Introduction}

Let $(M,\omega)$ be a compact K\"ahler manifold of complex dimension $n \geq 3$.
In this paper, we shall study the {\em supercritical phase} case of deformed Hermitian-Yang-Mills equation, 
\begin{equation}
\label{equation-1-1}
\mathfrak{Re} \left(\chi + \sqrt{-1} \partial\bar\partial \varphi + \sqrt{-1} \omega\right)^n 
= 
\cot (\theta_0) \mathfrak{Im} \left(\chi + \sqrt{-1} \partial\bar\partial \varphi + \sqrt{-1} \omega\right)^n ,
\quad
\sup_M \varphi = 0,
\end{equation}
where $\theta_0 \in (0,\pi)$ and
\begin{equation*}
\mathfrak{Re} \int_M\left(\chi  + \sqrt{-1} \omega\right)^n 
= 
\cot (\theta_0) \mathfrak{Im} \int_M\left(\chi  + \sqrt{-1} \omega\right)^n .
\end{equation*}
Jacob and Yau~\cite{JacobYau2017} introduced the study on the solvability of Equation~\eqref{equation-1-1}, which is important in mirror symmetry~\cite{HarveyLawson1982} and mathematical physics~\cite{LeungYauZaslow2001}\cite{MarinoMinasianMooreStrominger2000}\cite{StromingerYauZaslow1996}.
Let $\bm{\lambda} (\chi + \sqrt{-1} \partial\bar\partial \varphi)$ denote the eigenvalue set of $\chi + \sqrt{-1} \partial\bar\partial \varphi$ with respect to $\omega$. 
Equation~\eqref{equation-1-1} can be written as
\begin{equation*}
	\dfrac{\mathfrak{Re} \left(\prod^n_{i = 1} (\lambda_i (\chi + \sqrt{-1}\partial\bar\partial\varphi) + \sqrt{-1}) \right)}{\mathfrak{Im} \left(\prod^n_{i = 1} (\lambda_i (\chi + \sqrt{-1}\partial\bar\partial\varphi)  + \sqrt{-1}) \right)} = \cot (\theta_0) .
\end{equation*}
As shown by Jacob and Yau~\cite{JacobYau2017},  
the supercritical phase case implies that Equation~\eqref{equation-1-1} can be further rewritten as
\begin{equation}
\label{equation-1-2}
	\sum^n_{i = 1} \textup{ arccot }
	\lambda_i \left(\chi + \sqrt{-1} \partial\bar\partial\varphi\right) 
	=
	\theta_0
	.
\end{equation}
In particular, Equation~\eqref{equation-1-2} is called {\em hypercritical} if $\theta_0 \in \left(0, \dfrac{\pi}{2}\right)$.
Collins, Jacob and Yau~\cite{CollinsJacobYau2020} adapted $\mathcal{C}$-subsolution~\cite{Guan2014}\cite{Szekelyhidi2018} to solve Equation~\eqref{equation-1-1}. 
They showed that
a real valued $C^2$ function is a $\mathcal{C}$-subsolution to Equation~\eqref{equation-1-1} 
if and only if 
at each point $\bm{z} \in M$,
\begin{equation}
\label{inequality-2-2}
	\sum_{i \neq j} \textup{ arccot} \left(\lambda_i (\chi + \sqrt{-1} \partial\bar\partial v) \right) < \theta_0 , \qquad \forall j = 1 , 2 , \cdots , n .  
\end{equation}
In this paper, we are concerned with the boundary case, 
%
%
\begin{equation}
\label{boundary-case-2-5}
\sum_{i \neq j} \textup{ arccot} \left(\lambda_i (\chi + \sqrt{-1} \partial\bar\partial v) \right) \leq \theta_0 ,\qquad \forall j = 1 , 2 ,\cdots , n .
\end{equation}

In dimension $2$,  we can rewrite Equation~\eqref{equation-1-1} as
\begin{equation}
\label{equation-2-9}
\left(\chi + \sqrt{-1} \partial\bar\partial \varphi - \cot(\theta_0) \omega\right)^2 
= 
\csc^2(\theta_0) \omega^2,
\end{equation}
when Condition~\eqref{boundary-case-2-5} occurs.
It is easy to see that Equation~\eqref{equation-2-9} is a complex Monge-Amp\`ere equation with semipositive and big metric $\chi + \sqrt{-1} \partial\bar\partial v - \cot (\theta_0) \omega$. 
It is known that Equation~\eqref{equation-2-9} has a unique solution in pluripotential sense~\cite{EyssidieuxGuedjZeiahi2009}.
To study the equation in higher dimensions, we need to impose some extra structure conditions. 
Indeed, we shall study the following equation
\begin{equation}
\label{equation-1-2-1}
\mathfrak{Re} \left(\chi + \tilde \chi + \sqrt{-1} \partial\bar\partial \varphi + \sqrt{-1} \omega\right)^n 
= 
\cot (\theta_0) \mathfrak{Im} \left(\chi + \tilde \chi + \sqrt{-1} \partial\bar\partial \varphi + \sqrt{-1} \omega\right)^n ,
\end{equation}
where $\sup_M \varphi = 0$, $[\chi]$ satisfies the boundary case condition~\eqref{boundary-case-2-5}, $[\tilde \chi]$ is nef and big, and
\begin{equation}
\mathfrak{Re} \int_M\left(\chi + \tilde \chi + \sqrt{-1} \omega\right)^n 
= 
\cot (\theta_0) \mathfrak{Im} \int_M\left(\chi  + \tilde \chi + \sqrt{-1} \omega\right)^n .
\end{equation}
In dimension $2$, $\chi - \cot (\theta_0) \omega$ is a natural choice for $\tilde \chi$. 
From the previous works~\cite{Yau1978}\cite{EyssidieuxGuedjZeiahi2009}\cite{FuYauZhang20201}, we know that the results in this paper still hold true when $n = 2$. 
However, we shall concentrate our research on the cases of $n \geq 3$ in this paper. 
For more details of dimensional $2$ case, we refer the reader to Fu-Yau-Zhang~\cite{FuYauZhang20201}. 

The solution to Equation~\eqref{equation-1-2-1} is probably in some weak sense. 
A classical strategy to discover a weak solution is to construct and then investigate an approximation equation.
We may choose a constant $\Theta_0 \in (\theta_0 , \pi)$, and assume that
$\tilde \chi + \omega > 0$
and
\begin{equation*}
\bm{\lambda} (\chi + \tilde \chi + \omega ) \in \Gamma_{\theta_0,\Theta_0} 
:= 
\left\{\bm{\lambda} \in \mathbb{R}^n \Bigg| \max \bigg\{\sum_{i \neq j} \textup{ arccot } \lambda_i\bigg\}^n_{j = 1} < \theta_0 \; ,\; \sum^n_{i = 1} \textup{ arccot } \lambda_i < \Theta_0 \right\} 
,
\end{equation*} 
without loss of generality. 
Then we introduce an approximation equation for $0 < t \leq 1$ and nonnegative smooth function $f$, 
\begin{equation}
\label{equation-1-5}
\begin{aligned}
&\mathfrak{Re} \left(\chi + \tilde \chi + t\omega + \sqrt{-1} \partial\bar\partial \varphi_t + \sqrt{-1} \omega\right)^n \\
&\qquad = 
\cot (\theta_0) \mathfrak{Im} \left(\chi + \tilde \chi + t\omega + \sqrt{-1} \partial\bar\partial \varphi_t + \sqrt{-1} \omega\right)^n + c_t f \omega^n ,
\end{aligned}
\end{equation}
where 
$\sup_M \varphi_t = 0$, 
$\int_M f\omega^n = \int_M \omega^n$ and
\begin{equation}
\mathfrak{Re} \int_M\left(\chi + \tilde \chi + t\omega + \sqrt{-1} \omega\right)^n 
= 
\cot (\theta_0) \mathfrak{Im} \int_M\left(\chi  + \tilde \chi + t\omega + \sqrt{-1} \omega\right)^n + c_t \int_M \omega^n.
\end{equation}
It is easy to see that $c_t$ is an increasing non-negative coefficient with respect to parameter $t$.
In fact, for $ t > 0$
\begin{equation}
\begin{aligned}
\dfrac{\partial c_t }{\partial t} 
&=
\dfrac{n}{\int_M \omega^n}   \int_M \bigg(\mathfrak{Re} (\chi + \tilde \chi +t\omega +  \sqrt{-1} \partial\bar\partial \underline{u}_t + \sqrt{-1} \omega)^{n - 1} \\
&\qquad \qquad \qquad \quad - \cot (\theta_0) \mathfrak{Im} (\chi + \tilde \chi + t\omega + \sqrt{-1} \partial\bar\partial \underline{u}_t +\sqrt{-1} \omega)^{n - 1} \bigg)\wedge \omega  
> 0
,
\end{aligned}
\end{equation}
where $\tilde \chi + t\omega + \sqrt{-1} \partial\bar\partial \underline{u} _t > 0$. 
%
%
%
%
%
%
%
%
%
%
%
%
By the work of Chen~\cite{Chen2021}, there is a smooth solution to Equation~\eqref{equation-1-5}, which is also to be discussed later.

The main result of this paper is as follows. 
\begin{theorem}
\label{main-theorem}

Suppose that smooth function $f \in L^q (M) $ for some $q > 1$.
The supercritical phase case of approximation equation~\eqref{equation-1-5} admits a unique smooth solution $\varphi_t$ for all $0 < t \leq 1$.
There is a sequence $\{t_i\} \subset (0,1]$ decreasing to $0$ such that 
$\varphi_{t_i}$ pointwisely converges to a $(\chi + \tilde \chi - \cot (\theta_0) \omega)$-PSH function $\varphi$,
if either of the following conditions holds true: 
\begin{enumerate}
\item $n \geq 4$ and $\theta_0 \in (0, \pi)$; 

\item $n = 3$ and $\theta_0 \in \left(0, \dfrac{\pi}{2}\right]$; 

\item $n = 3$, $\theta_0 \in \left(\dfrac{\pi}{2}, \pi\right)$ and $\bm{\lambda} (\chi + \sqrt{-1} \partial\bar\partial v) \in \bar \Gamma^2$.

\end{enumerate}

\end{theorem}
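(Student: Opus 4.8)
The plan is to argue in three steps. First, I would produce the smooth solution $\varphi_t$ for $t\in(0,1]$ by invoking Chen's solvability theorem \cite{Chen2021}. The operator $\bm\lambda\mapsto\sum_i\textup{arccot}\,\lambda_i$ is elliptic and, because $\theta_0,\Theta_0\in(0,\pi)$, has convex sublevel sets on $\Gamma_{\theta_0,\Theta_0}$, so the only input one must supply is a $\mathcal C$-subsolution of \eqref{equation-1-5} with eigenvalues in $\Gamma_{\theta_0,\Theta_0}$. I would build such a function as $v+\underline u_t$, where $v$ realizes the boundary condition \eqref{boundary-case-2-5} for $[\chi]$ and $\underline u_t$ is chosen — using that $[\tilde\chi]$ is nef and big — so that $\tilde\chi+t\omega+\sqrt{-1}\partial\bar\partial\underline u_t>0$. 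Since adding a strictly positive form strictly decreases the $n-1$ largest of the numbers $\{\textup{arccot}\,\lambda_i\}$, and hence their sum, the eigenvalues of $\chi+\tilde\chi+t\omega+\sqrt{-1}\partial\bar\partial(v+\underline u_t)$ satisfy $\sum_{i\ne j}\textup{arccot}\,\lambda_i<\theta_0$ for every $j$; arranging in addition $\sum_i\textup{arccot}\,\lambda_i<\Theta_0$ (which keeps the eigenvalues uniformly bounded below) requires a definite amount of positivity in one eigendirection, and this is precisely where the hypotheses on $(n,\theta_0)$ come in. I expect this to be the main obstacle: the needed positivity is automatic when $n\ge4$, and when $n=3$ and $\theta_0\le\pi/2$ one even has $\lambda_i\ge\cot\theta_0\ge0$ directly from \eqref{boundary-case-2-5}, whereas for $n=3$ and $\theta_0>\pi/2$ there are $3$-tuples obeying \eqref{boundary-case-2-5} but lying outside $\bar\Gamma^2$, which is why case (3) carries the extra hypothesis $\bm\lambda(\chi+\sqrt{-1}\partial\bar\partial v)\in\bar\Gamma^2$. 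Uniqueness of $\varphi_t$ normalized by $\sup_M\varphi_t=0$ follows from the comparison principle for the operator.

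Next I would extract the uniform geometric information that drives the convergence. Put $A_t=\chi+\tilde\chi+t\omega+\sqrt{-1}\partial\bar\partial\varphi_t$, $\lambda_i=\lambda_i(A_t)$, $\alpha_i=\textup{arccot}\,\lambda_i\in(0,\pi)$ and $\Theta_t=\sum_i\alpha_i$. From $\lambda_i+\sqrt{-1}=(\sin\alpha_i)^{-1}e^{\sqrt{-1}\,\alpha_i}$ one obtains $\mathfrak{Re}(A_t+\sqrt{-1}\omega)^n-\cot(\theta_0)\,\mathfrak{Im}(A_t+\sqrt{-1}\omega)^n=\dfrac{\prod_i\sqrt{\lambda_i^2+1}}{\sin\theta_0}\,\sin(\theta_0-\Theta_t)\,\omega^n$, so \eqref{equation-1-5} amounts to the pointwise identity
\[
\Bigl(\prod_i\sqrt{\lambda_i^2+1}\Bigr)\sin(\theta_0-\Theta_t)=c_t\,f\,\sin\theta_0\ \ge\ 0 .
\]
Since the solution lies in $\Gamma_{\theta_0,\Theta_0}$ we have $\Theta_t<\Theta_0<\pi$, so $\theta_0-\Theta_t\in(-\pi,\pi)$, and the inequality forces $\theta_0-\Theta_t\in[0,\pi)$, i.e. $\Theta_t\le\theta_0$; because the $\alpha_i$ are positive this yields $\alpha_i\le\theta_0$, i.e. $\lambda_i\ge\cot\theta_0$, for every $i$, that is
\[
\chi+\tilde\chi+t\omega-\cot(\theta_0)\,\omega+\sqrt{-1}\partial\bar\partial\varphi_t\ \ge\ 0 .
\]
In particular every $\varphi_t$, $t\in(0,1]$, is $\bigl(\chi+\tilde\chi+\omega-\cot(\theta_0)\,\omega\bigr)$-plurisubharmonic, and — as a by-product of $\Theta_t\le\theta_0$ — the form $A_t$ itself satisfies \eqref{boundary-case-2-5}.

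Finally, the compactness step. The smooth form $\chi+\tilde\chi+\omega-\cot(\theta_0)\,\omega$ is $\le C\omega$ for some $C>0$, so $\{\varphi_t\}_{0<t\le1}$ lies inside $\{\psi\in\mathrm{PSH}(M,C\omega):\sup_M\psi=0\}$, a set that is well known to be compact in $L^1(M)$. I would pick $t_i\downarrow0$ with $\varphi_{t_i}\to\varphi$ in $L^1$ and, after a further subsequence, pointwise almost everywhere; since for each fixed $s>0$ the set $\mathrm{PSH}\bigl(M,\chi+\tilde\chi+s\omega-\cot(\theta_0)\,\omega\bigr)$ is closed under $L^1$-limits and contains every $\varphi_{t_i}$ with $t_i\le s$, it contains $\varphi$, and letting $s\downarrow0$ shows $\chi+\tilde\chi-\cot(\theta_0)\,\omega+\sqrt{-1}\partial\bar\partial\varphi\ge0$; moreover $\varphi\not\equiv-\infty$ because $\|\varphi\|_{L^1}=\lim_i\|\varphi_{t_i}\|_{L^1}<\infty$. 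If desired, a uniform $L^\infty$ bound on the $\varphi_t$ would follow from $f\in L^q$ with $q>1$ by a Kolodziej-type argument, and convergence everywhere on $M$ can be obtained once one knows $\varphi_t$ is monotone in $t$ via the comparison principle; but neither refinement is needed for the statement as given.
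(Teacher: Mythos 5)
Your Steps 1 and 2 are essentially sound (for Step 1, the subsolution construction with Chen's theorem is what the paper does in Section~\ref{solvability}; for Step 2, the identity $\sum_i\operatorname{arccot}\lambda_i(A_t)\le\theta_0$ and hence $A_t\ge\cot(\theta_0)\omega$ is exactly what drives inequality~\eqref{inequality-3-1-19} and is used repeatedly). The problem is Step 3, and more importantly your diagnosis of where the three hypotheses on $(n,\theta_0)$ enter.

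The $L^1$ compactness of the family $\{\varphi_t\}$ only gives you a subsequence converging in $L^1$ and, after a further extraction, pointwise almost everywhere. But Theorem~\ref{main-theorem} asserts \emph{pointwise} convergence (on all of $M$), and for a family of quasi-PSH functions without monotonicity there is no way to promote a.e.\ convergence to everywhere convergence. Your suggested escape — ``$\varphi_t$ is monotone in $t$ via the comparison principle'' — does not hold: the right-hand side of~\eqref{equation-1-5} contains the $t$-dependent term $c_t f$, so the equations at two values of $t$ are not ordered and the comparison principle does not yield monotonicity of $\varphi_t$ in $t$. The paper instead proves a \emph{stability estimate} $\sup_M(\varphi_2-\varphi_1)\le 2(-U_t+C)\Vert(\varphi_2-\varphi_1)^+\Vert_{L^{q^*}}^{1/(n+2)}$ (Sections~\ref{stability} and the subsequent section for $n=3$), uses $L^1$ compactness to force the $L^{q^*}$ norms of successive differences to drop geometrically, and then shifts $\varphi_{t_i}$ by $\bigl(\tfrac{1}{2^j}-\tfrac{1}{2^i}\bigr)(U-C)$ to produce a genuinely \emph{decreasing} sequence; decreasing sequences of quasi-PSH functions do converge pointwise everywhere, and that is how the theorem's conclusion is reached.

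This is also where the three hypotheses are actually used, not — as you conjecture — in manufacturing a $\mathcal{C}$-subsolution with the right amount of positivity; Proposition~\ref{proposition-4-1} solves the approximation equation under the $\mathcal C$-subsolution hypothesis alone, in every dimension and for every $\theta_0\in(0,\pi)$. The case split appears in the proof of the stability estimate: for $n\ge4$ the key is item (4) of Lemma~\ref{lemma-2-3}, which permits the auxiliary parameter $b\ge-\epsilon_1<0$ and hence the construction of the intermediate solution $v_t$ to Equation~\eqref{equation-4-3} with a negative perturbation $-\sigma_1\omega^n$; for $n=3$ that lemma only allows $b\ge0$, so the paper falls back on rewriting the equation as a Monge--Amp\`ere type equation when $\cot\theta_0\ge0$ (case 2), and on the concavity of $S_{k+1}/S_k$ over $\Gamma^k$ together with the extra assumption $\bm\lambda(\chi+\sqrt{-1}\partial\bar\partial v)\in\bar\Gamma^2$ when $\cot\theta_0<0$ (case 3). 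As written, your argument proves only a.e.\ convergence and entirely skips the stability estimate, which is the technical heart of the paper and the sole reason the theorem is stated with the trichotomy of hypotheses.
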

%
%
%
%
%

When the envelope $U$ in \eqref{definition-2-2} is bounded, 
we can see that $\{\varphi_t\}_{0 < t \leq 1}$ is uniformly bounded
and hence $\left\{\varphi_{t_i} + \dfrac{C}{2^{i + 1}}\right\}$ is decreasing for some constant $C$. 
Therefore, the limit function $\varphi$ is indeed a weak solution in pluripotential sense.
\begin{corollary}
Suppose that 
$U$ is bounded. 
The supercritical phase case of Equation~\eqref{equation-1-2-1} admits a bounded pluripotential solution $\varphi$ which is $(\chi + \tilde \chi - \cot(\theta_0) \omega)$-PSH, 
\begin{enumerate}
\item $n \geq 4$ and $\theta_0 \in (0, \pi)$; 

\item $n = 3$ and $\theta_0 \in \left(0, \dfrac{\pi}{2}\right]$; 

\item $n = 3$, $\theta_0 \in \left(\dfrac{\pi}{2}, \pi\right)$ and $\bm{\lambda} (\chi + \sqrt{-1} \partial\bar\partial v) \in \bar \Gamma^2$.

\end{enumerate}

\end{corollary}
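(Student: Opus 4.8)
The plan is to deduce the Corollary from Theorem~\ref{main-theorem} by upgrading the pointwise limit $\varphi$ to a genuine pluripotential solution under the extra hypothesis that the envelope $U$ is bounded. First I would record that boundedness of $U$ gives a uniform $C^0$ bound for the family $\{\varphi_t\}_{0<t\le 1}$: since $\chi+\tilde\chi-\cot(\theta_0)\omega \le \chi+\tilde\chi+t\omega-\cot(\theta_0)\omega$ for $t>0$ and the solutions $\varphi_t$ of \eqref{equation-1-5} are $(\chi+\tilde\chi+t\omega-\cot(\theta_0)\omega)$-PSH with $\sup_M\varphi_t=0$, the envelope comparison forces $U \le \varphi_t \le 0$, so $\|\varphi_t\|_{L^\infty} \le \|U\|_{L^\infty} =: C$. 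This uniform bound is exactly what is missing in the general statement of Theorem~\ref{main-theorem}, where only pointwise convergence (possibly to a function that is merely $(\chi+\tilde\chi-\cot(\theta_0)\omega)$-PSH and a priori unbounded) is asserted.

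Next I would arrange the convergence to be monotone so that Bedford--Taylor/Guedj--Zeriahi continuity of the Monge--Ampère-type operator applies. Passing to the subsequence $\{t_i\}$ from Theorem~\ref{main-theorem}, consider $\psi_i := \varphi_{t_i} + \dfrac{C}{2^{i+1}}$. The point is that $c_t$ is increasing in $t$ and the right-hand side of \eqref{equation-1-5} degenerates as $t\to 0$; combined with the uniform bound $|\varphi_{t_i}|\le C$ and a comparison-principle argument for the (concave) operator $\Sigma_j\,\mathrm{arccot}\,\lambda_j$ on $\Gamma_{\theta_0,\Theta_0}$, one shows that for $i$ large $\varphi_{t_{i+1}} \le \varphi_{t_i} + \dfrac{C}{2^{i+1}}$, i.e. $\{\psi_i\}$ is eventually decreasing; since each $\psi_i$ lies between $U$ and $C$, the limit of the decreasing tail is again $\varphi$ (the correction $C/2^{i+1}\to 0$), and $\varphi$ is a bounded $(\chi+\tilde\chi-\cot(\theta_0)\omega)$-PSH function. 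Then, writing the equation as the complex Monge--Ampère-type identity $\mathfrak{Re}(\cdots)^n=\cot(\theta_0)\mathfrak{Im}(\cdots)^n$ for the nef-and-big background class and invoking the stability/continuity of the relevant nonlinear operator along decreasing sequences of bounded PSH potentials (as in Eyssidieux--Guedj--Zeriahi~\cite{EyssidieuxGuedjZeiahi2009}), the limit $\varphi$ satisfies \eqref{equation-1-2-1} in the pluripotential sense, with $c_{t_i}\to 0$ killing the extra $c_t f\omega^n$ term.

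Finally I would check the three case hypotheses: they are inherited verbatim from Theorem~\ref{main-theorem}, since the construction of the limit $\varphi$ and its pointwise convergence is precisely the content of that theorem, and boundedness of $U$ only adds the $C^0$ control needed to promote pointwise to monotone-a.e. convergence and hence to pluripotential convergence. I expect the main obstacle to be the second step: establishing the eventual monotonicity of $\{\varphi_{t_i}+C/2^{i+1}\}$ rigorously, i.e. getting a quantitative comparison $\varphi_{t_{i+1}}-\varphi_{t_i} \le C/2^{i+1}$ from the structure of \eqref{equation-1-5} as $t$ decreases. This requires a careful comparison principle for the supercritical operator on $\Gamma_{\theta_0,\Theta_0}$ together with control on how fast $c_t\to 0$ and $t\omega\to 0$, rather than any new PDE estimate; once monotonicity is in hand, the convergence of the Monge--Ampère masses and the identification of $\varphi$ as a pluripotential solution are standard.
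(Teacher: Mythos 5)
Your first step (boundedness of $U$ gives a uniform $C^0$ bound on $\{\varphi_t\}$) is essentially what the paper does, and your last step (decreasing + bounded sequence $\Rightarrow$ the Monge--Amp\`ere-type measures converge and the limit is a pluripotential solution) is also the intended mechanism. The gap is in the middle: the decreasing structure of $\{\varphi_{t_i}+C/2^{i+1}\}$ does \emph{not} come from a comparison principle for the operator $\sum_j\textup{arccot}\,\lambda_j$ as $t$ decreases. There is no clean sign in that comparison: as $t\downarrow 0$, the background $\chi+\tilde\chi+t\omega$ decreases (which pushes $\mathfrak{g}$ down) while $c_t f$ also decreases (which pushes $-c_t f/\mathfrak{Im}(\cdots)^n$ up), and both effects are entangled through $\mathfrak{Im}(\cdots)^n$; a direct comparison between $\varphi_{t_{i+1}}$ and $\varphi_{t_i}$ does not yield $\varphi_{t_{i+1}}-\varphi_{t_i}\le C/2^{i+1}$. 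What the paper actually uses is the \emph{stability estimate} proved in Sections~\ref{stability} and 7, namely
\[
\sup_M(\varphi_2-\varphi_1)\le 2(-U_t+C)\,\Vert(\varphi_2-\varphi_1)^+\Vert^{1/(n+2)}_{L^{q^*}},
\]
combined with a gradient estimate and Sobolev compactness to pass to a subsequence along which $\Vert\varphi_{t_{i+1}}-\varphi_{t_i}\Vert_{L^{q^*}}$ decays geometrically; plugging that decay into the stability estimate is what produces the bound $\varphi_{t_{i+1}}-\varphi_{t_i}\le(-U+C)/2^{i+1}$, which becomes $C'/2^{i+1}$ once $U$ is bounded. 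So the key ingredient is precisely a new PDE estimate (auxiliary Monge--Amp\`ere equation, Guo--Phong--Tong / De Giorgi iteration), not a comparison argument; your own assessment that it requires ``no new PDE estimate'' is the part that is incorrect.

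This misreading also explains why your treatment of the three case hypotheses is too cavalier. They are not ``inherited verbatim'' as a black box: the cases split exactly because the stability estimate is proved by three different decompositions --- the intermediate function $v_t$ solving~\eqref{equation-4-3} for $n\ge 4$, the Monge--Amp\`ere rewriting~\eqref{equation-5-3} for $n=3$ hypercritical, and the $S_3/S_2$-concavity argument under $\bm{\lambda}(\chi)\in\bar\Gamma^2$ for $n=3$ non-hypercritical. A correct proof of the Corollary should acknowledge that the stability estimate --- and hence the case split --- is the load-bearing step. Also, as a minor point, the $C^0$ bound from Section~\ref{L-estimate} is of the form $-\varphi_t+U_t<K_0$, not the envelope comparison $U\le\varphi_t$; the two give the same conclusion once $U$ is bounded, but the mechanism you cite is not the one in the paper.
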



The key assumption is the existence of $\tilde \chi$. 
It is very likely that we can derive some numerical characterizations of nef class $[\tilde \chi]$, in views of~\cite{DemaillyPaun2004}\cite{Chen2021}. 
Meanwhile, there is no way to numerically characterize sempositive $\tilde \chi$ on general K\"ahler manifolds. 
However, there is a easy sufficient condition, that is,
class $\left[\chi - \cot \left(\dfrac{\theta_0}{n - 1}\right)\omega\right]$ is big and has a $C^2$ semipositive representative form 
in Equation~\eqref{equation-1-1}.
%
%
%


\medskip
\section{Preliminary}
\label{preliminary}
 
In this section, we shall state some notations, lemmas and theorems.

\medskip

\subsection{Elementary lemmas}


To deal with the boundary case, we shall adapt the argument of Guo-Phong-Tong~\cite{GPT2021} to an extended $\mathcal{C}$-subsolution condition~\cite{Sun202305}. The argument of Guo-Phong-Tong also works on nef classes~\cite{GPTW2021}. 
By discovering appropriate $\mathcal{C}$-subsolution conditions,  this technique can be applied to various complex equations~\cite{GPT2021}\cite{GPTW2021}\cite{GP2022}\cite{SuiSun2021}\cite{Sun202210}\cite{Sun202211}\cite{Sun202301}\cite{Sun202305}.
A key step in the argument is from Wang-Wang-Zhou~\cite{WangWangZhou2021}, who utilized a De Giorgi iteration. 
In this paper, we shall adopt the following lemma on De Giorgi iteration from \cite{ChenWu}\cite{GilbargTrudinger}. 
\begin{lemma}
\label{lemma-2-2}
Suppose that $\phi (s): [s_0,+\infty) \to [0,+\infty)$ is an  increasing function such that
\begin{equation*}
	s'  \phi (s' + s) \leq C \phi^{1 + \delta} (s), \qquad \forall s' >0, s\geq s_0
\end{equation*}
for some positive constant   $\delta$. Then $\phi (s_0 + d) = 0$ whenever $d \geq C \phi^{\delta} (s_0) 2^{\dfrac{1 + \delta}{\delta}}$ .
\end{lemma}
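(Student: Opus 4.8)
The plan is to run the classical De Giorgi--Stampacchia iteration on a geometrically refining sequence of levels accumulating at $s_0+d$. Fix any $d$ with $d \geq C\,\phi^{\delta}(s_0)\,2^{(1+\delta)/\delta}$ and, for $k=0,1,2,\dots$, set $s_k := s_0 + d\bigl(1 - 2^{-k}\bigr)$, so that $s_0$ is the prescribed base level, the sequence $(s_k)$ is increasing, and $s_k \uparrow s_0+d$. Write $a_k := \phi(s_k)\geq 0$. Applying the hypothesis with $s=s_k$ and $s' = s_{k+1}-s_k = d\,2^{-(k+1)}>0$ gives
\[
  \frac{d}{2^{k+1}}\,a_{k+1} \;=\; s'\,\phi(s'+s_k) \;\leq\; C\,\phi^{1+\delta}(s_k) \;=\; C\,a_k^{\,1+\delta},
\]
that is, the one-step recursion $a_{k+1} \leq \dfrac{2C}{d}\,2^{k}\,a_k^{\,1+\delta}$.

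Next I would show by induction that $a_k \leq 2^{-k/\delta}\,a_0$ for all $k\geq 0$. This holds with equality at $k=0$. Granting it at stage $k$, the recursion yields
\[
  a_{k+1} \;\leq\; \frac{2C}{d}\,2^{k}\,\bigl(2^{-k/\delta}a_0\bigr)^{1+\delta}
  \;=\; \Bigl(\frac{2C}{d}\,a_0^{\,\delta}\Bigr)\,2^{-k/\delta}\,a_0 ,
\]
using that $1-\tfrac{1+\delta}{\delta}=-\tfrac1\delta$. The lower bound imposed on $d$ is exactly the inequality $\dfrac{2C}{d}\,a_0^{\,\delta} \leq 2^{-1/\delta}$, so $a_{k+1}\leq 2^{-(k+1)/\delta}a_0$ and the induction closes. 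In particular $a_k\to 0$ as $k\to\infty$.

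To conclude, note that $s_k \leq s_0+d$ for every $k$, so by monotonicity of $\phi$ one has $0\leq \phi(s_0+d)\leq \phi(s_k)=a_k$ for all $k$; letting $k\to\infty$ forces $\phi(s_0+d)=0$, as claimed. There is essentially no obstacle here: this is the standard Stampacchia/De Giorgi iteration, and the only point that needs care is the exponent bookkeeping in the induction, which is precisely what pins down the explicit constant $2^{(1+\delta)/\delta}$ in the hypothesis on $d$; alternatively the statement follows from the classical iteration lemma after the substitution $\psi(t):=\phi(s_0+t)$.
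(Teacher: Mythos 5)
Your argument is the standard De Giorgi--Stampacchia iteration and is correct; the paper gives no proof of this lemma, merely citing \cite{ChenWu} and \cite{GilbargTrudinger}, and your exponent bookkeeping pins down the constant $2^{(1+\delta)/\delta}$ exactly. One thing worth flagging: in the last step you pass from $s_k\leq s_0+d$ to $\phi(s_0+d)\leq\phi(s_k)$, which uses that $\phi$ is \emph{non-increasing}, whereas the lemma as printed says ``increasing.'' That must be a typo in the paper --- under a literally increasing $\phi$ the hypothesis $s'\,\phi(s'+s)\leq C\phi^{1+\delta}(s)$ for \emph{all} $s'>0$ would already force $\phi\equiv 0$ on $[s_0,\infty)$, and in every application in the paper the function $\phi(s)$ is of the form $\int_{M_s}g\,\omega^n$ with $M_s$ a shrinking family of superlevel sets, hence non-increasing. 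So the reading of the hypothesis implicit in your final step is the intended one, and your proof is sound.
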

We shall use the iteration to obtain $L^\infty$ estimates and stability estimates. Then a convergent decreasing function sequence can be constructed, 
and the corresponding limit function can be viewed as a weak solution in pluripotential sense 
if the sequence is uniformly bounded.

%
%
%
%

The envelope of class $[\tilde \chi + t\omega]$  is defined by
\begin{equation}
	U_t := \sup \left\{u | \tilde \chi + t\omega + \sqrt{-1} \partial\bar\partial u \geq 0 \text{ in  current sense} , u \leq 0 \right\} ,
\end{equation}
which might not be  smooth. 
Berman~\cite{Berman2019} constructed a smooth approximation for $U_t$.
\begin{lemma}
\label{lemma-Berman-approximation}
Let $u_\beta$ be the unique smooth admissible solution to the complex Monge-Amp\`ere equation
\begin{equation*}
	\left(\tilde\chi + t\omega + \sqrt{-1} \partial\bar\partial u \right)^n = e^{\beta u } \omega^n.
\end{equation*}
Then we have 
\begin{equation*}
\lim_{\beta \to + \infty} \Vert u_\beta - U_t \Vert_{L^\infty} = 0 .
\end{equation*}

\end{lemma}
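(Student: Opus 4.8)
The plan is to follow Berman's original strategy and establish the convergence in two halves. First I would prove the one-sided bound $u_\beta \leq U_t + o(1)$. Since $U_t$ is the supremum of $(\tilde\chi+t\omega)$-plurisubharmonic functions that are $\leq 0$, it suffices to show that $u_\beta - \sup_M u_\beta$ is such a competitor, together with a uniform bound on $\sup_M u_\beta$. Evaluating the equation $(\tilde\chi+t\omega+\sqrt{-1}\partial\bar\partial u_\beta)^n = e^{\beta u_\beta}\omega^n$ at a point where $u_\beta$ attains its maximum, the complex Hessian is nonpositive there, so $e^{\beta \sup_M u_\beta}\omega^n \leq (\tilde\chi+t\omega)^n$, which forces $\sup_M u_\beta \leq \tfrac{1}{\beta}\log\!\big(\sup_M \tfrac{(\tilde\chi+t\omega)^n}{\omega^n}\big) \to 0$. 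Hence $u_\beta \leq U_t + \tfrac{C}{\beta}$ for a fixed constant $C$ depending only on $\tilde\chi+t\omega$ and $\omega$.

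Next I would prove the reverse bound $u_\beta \geq U_t - o(1)$, which is the main obstacle and requires a genuine global pluripotential-theoretic input rather than the maximum principle. The idea is that $U_t$ (after a small regularization) is an approximate supersolution: morally $(\tilde\chi+t\omega+\sqrt{-1}\partial\bar\partial U_t)^n$ is supported on the contact set where $U_t = 0$, so there $e^{\beta U_t} = 1$ while the left-hand side is finite, and off the contact set the Monge--Amp\`ere measure vanishes while $e^{\beta U_t} < 1$. To make this rigorous one replaces $U_t$ by $U_t^\varepsilon := \max$-regularizations or by the solution $v_\varepsilon$ of $(\tilde\chi+t\omega+\sqrt{-1}\partial\bar\partial v_\varepsilon)^n = \varepsilon\,e^{\beta v_\varepsilon}\omega^n$ with the appropriate normalization, and then applies the comparison principle for complex Monge--Amp\`ere operators on the compact K\"ahler manifold $(M,\omega)$ — using that $\tilde\chi+t\omega$ is nef and big so that the nonpluripolar Monge--Amp\`ere theory of Boucksom--Eyssidieux--Guedj--Zeriahi and Eyssidieux--Guedj--Zeriahi applies and $U_t$ has full mass. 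Comparing $u_\beta$ with $v_\varepsilon$ on the set $\{u_\beta < v_\varepsilon\}$ and letting first $\beta\to\infty$ then $\varepsilon\to 0$ yields $\liminf_{\beta\to\infty} u_\beta \geq U_t$ uniformly.

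Combining the two bounds gives $\Vert u_\beta - U_t\Vert_{L^\infty} \to 0$. I expect the delicate point to be the uniformity of the lower bound: one must control the convergence rate of the comparison argument independently of the point of $M$, which is where bigness of $[\tilde\chi+t\omega]$ enters to guarantee that the envelope is not identically $-\infty$ and that the Monge--Amp\`ere mass is concentrated as described; the uniform Skoda-type estimate for the complex Monge--Amp\`ere equation with $L^q$ right-hand side then upgrades the $L^1$ or capacity convergence to an $L^\infty$ one. Since this lemma is due to Berman~\cite{Berman2019}, in the paper it is quoted rather than reproved, and for our purposes it suffices to cite it; but the sketch above records the mechanism we rely on when we later use $u_\beta$ as a smooth barrier in the a priori estimates.
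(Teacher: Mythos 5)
The paper does not supply its own proof of this lemma — it is quoted from Berman~\cite{Berman2019}, exactly as you observe — so your decision to cite rather than reprove matches the paper. Your sketch of Berman's mechanism (the maximum principle at a maximum of $u_\beta$ giving $\sup_M u_\beta \leq C/\beta$ and hence $u_\beta \leq U_t + C/\beta$; a comparison/pluripotential argument for the reverse inequality, using that $[\tilde\chi + t\omega]$ is a K\"ahler class for $t>0$) is the standard argument and is accurate at this level of detail.
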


Let $0 < t_1 < t_2$.
Since
$\tilde \chi + t_2 \omega + \sqrt{-1} \partial \bar\partial u 
> 
\tilde \chi + t_1 \omega + \sqrt{-1} \partial \bar\partial u$,
we can conclude that $U_{t_1} \leq U_{t_2}$.
Boucksom~\cite{Boucksom2004} (see also \cite{DemaillyPaun2004})
proved that there is a function $\rho$ and a constant $\kappa > 0$ such that $\rho$ is smooth in  $Amp (\tilde\chi)$ with analytic singularities, $\sup_M \rho = 0$
and
$\tilde\chi + \sqrt{-1}\partial\bar\partial\rho \geq \kappa \omega $ in the current sense. 
Then it is reasonable to define
\begin{equation}
\label{definition-2-2}
	U := \lim_{t\to 0+ } U_t \geq \rho .
\end{equation}

\medskip
\subsection{Properties of deformed Hermitian-Yang-Mills equation and its approximation equation}
We can express the terms in the equations by
\begin{equation}
\mathfrak{Re} \left(\prod^n_{i = 1} (\lambda_i + \sqrt{-1})\right) = \cos \left(\sum^n_{i = 1}  \textup{arccot }\lambda_i \right) \prod^n_{i = 1} \sqrt{1 + \lambda^2_i} ,
\end{equation}
and
\begin{equation}
\mathfrak{Im} \left(\prod^n_{i = 1} (\lambda_i + \sqrt{-1})\right) = \sin \left(\sum^n_{i = 1}  \textup{arccot }\lambda_i \right) \prod^n_{i = 1} \sqrt{1 + \lambda^2_i} .
\end{equation}

The function $S_k$ is the $k$-th elementary polynomial, that is
\begin{equation*}
S_k (\bm{\lambda}) = S_k (\lambda_1,\cdots,\lambda_n) = \sum_{i_1 < i_2 < \cdots < i_k} \lambda_{i_1} \lambda_{i_2} \cdots \lambda_{i_k} .
\end{equation*}
The $k$-positive cone $\Gamma^k \subset \mathbb{R}^n$ is defined as
\begin{equation*}
	\Gamma^k := \{\bm{\lambda} \in \mathbb{R}^n \,|\, S_1 (\bm{\lambda}) > 0, \cdots , S_k (\bm{\lambda}) > 0\}
	.
\end{equation*}
The left term in \eqref{equation-1-2} has the following properties discovered by Wang-Yuan~\cite{WangYuan2014}.
\begin{lemma}
\label{lemma-2-3-1}
Suppose that $\lambda_1 \geq \lambda_2 \geq \cdots \geq \lambda_n$ satisfying 
\begin{equation*}
\sum^n_{i = 1} \textup{ arccot } \lambda_i \leq \pi .
\end{equation*}
Then $(\lambda_1, \cdots , \lambda_n) \in \bar \Gamma^k$, 
$
\lambda_1 \geq \lambda_2 \geq \cdots \geq \lambda_{n - 1} > 0 
$, 
and
$
\lambda_1 +  (n - 1)\lambda_n \geq 0
$
.

\end{lemma}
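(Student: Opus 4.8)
The plan is to exploit the monotonicity and symmetry of $\sum_i \operatorname{arccot}\lambda_i$ together with the ordering $\lambda_1 \geq \cdots \geq \lambda_n$. First I would establish that $\lambda_{n-1} > 0$. Since $\operatorname{arccot}$ is strictly decreasing from $\pi$ (at $-\infty$) to $0$ (at $+\infty$), with $\operatorname{arccot}(0) = \pi/2$, if we had $\lambda_{n-1} \leq 0$ then also $\lambda_n \leq \lambda_{n-1} \leq 0$, so $\operatorname{arccot}\lambda_{n-1} \geq \pi/2$ and $\operatorname{arccot}\lambda_n \geq \pi/2$; together with $\operatorname{arccot}\lambda_i > 0$ for all $i$ this forces $\sum_i \operatorname{arccot}\lambda_i > \pi$ (strictly, because $n \geq 3$ gives at least one extra positive term, or because one of the two $\operatorname{arccot}$ values exceeds $\pi/2$ strictly unless both $\lambda_{n-1}, \lambda_n$ are exactly $0$, in which case the remaining $n-2 \geq 1$ terms are positive). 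This contradicts the hypothesis, so $\lambda_{n-1} > 0$, and hence $\lambda_1 \geq \cdots \geq \lambda_{n-1} > 0$.

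Next I would prove the bound $\lambda_1 + (n-1)\lambda_n \geq 0$. This is the quantitative heart of the statement. If $\lambda_n \geq 0$ there is nothing to prove, so assume $\lambda_n < 0$. The idea is that $\lambda_1$ cannot be too small while the angle constraint holds. Write $\theta_i = \operatorname{arccot}\lambda_i$, so $\theta_1 \leq \cdots \leq \theta_n$ and $\sum_i \theta_i \leq \pi$ with $\theta_n > \pi/2$. Since $\operatorname{arccot}$ is convex on $(-\infty,0]$ and we want a lower bound on $\lambda_1 = \cot\theta_1$, I would argue as follows: from $\theta_1 + \cdots + \theta_{n-1} \leq \pi - \theta_n$ and $\theta_i \geq \theta_1$ for $i \leq n-1$, one does not directly get enough. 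Instead, the cleanest route is the comparison $\lambda_1 \geq \cot(\pi - \theta_n) = -\cot\theta_n = -\lambda_n$ once we know $\theta_1 \leq \pi - \theta_n$: indeed $\theta_1 \leq \frac{1}{n-1}\sum_{i=1}^{n-1}\theta_i \leq \frac{1}{n-1}(\pi - \theta_n) \leq \pi - \theta_n$, using $n \geq 2$ and $\pi - \theta_n \geq 0$. Hence $\lambda_1 = \cot\theta_1 \geq \cot(\pi-\theta_n) = -\lambda_n \geq 0 \geq -(n-1)\lambda_n$ is too crude; I actually obtain $\lambda_1 \geq -\lambda_n$, which already gives $\lambda_1 + (n-1)\lambda_n \geq -\lambda_n + (n-1)\lambda_n = (n-2)\lambda_n$, still negative. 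So the sharper estimate must use all $n-1$ angles: from $\theta_1 \leq \frac{1}{n-1}(\pi - \theta_n)$ and monotonicity of $\cot$ on $(0,\pi)$, $\lambda_1 = \cot\theta_1 \geq \cot\!\big(\tfrac{\pi-\theta_n}{n-1}\big)$, and then I would verify the elementary trigonometric inequality $\cot\!\big(\tfrac{\pi-\theta_n}{n-1}\big) + (n-1)\cot\theta_n \geq 0$ for $\theta_n \in (\pi/2,\pi)$, $n \geq 2$; equivalently, setting $\psi = \pi - \theta_n \in (0,\pi/2)$, one needs $\cot\!\big(\tfrac{\psi}{n-1}\big) \geq (n-1)\cot\psi$, which follows from the concavity/superadditivity properties of $\cot$ near $0$ (e.g. from $\tan x \geq x$ and $\tan\big(\tfrac{\psi}{n-1}\big) \cdot (n-1) \leq \tan\psi$ type estimates, or a direct convexity argument on $x \mapsto \cot x$).

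Finally, for $(\lambda_1,\ldots,\lambda_n) \in \bar\Gamma^k$ for every $k$: once we have $\lambda_1 \geq \cdots \geq \lambda_{n-1} > 0$ and $\lambda_1 + (n-1)\lambda_n \geq 0$, any $k$-subset sum/product of the $\lambda_i$ can be shown nonnegative by a standard Maclaurin/Newton-type argument, or more directly: the "worst" configuration for $S_k$ has as many of the $\lambda_i$ as possible at the minimal value, and since only $\lambda_n$ can be negative while $\lambda_1 + (n-1)\lambda_n \geq 0$ controls it against the positive ones, each $S_k(\bm\lambda) \geq 0$; this is exactly the Gårding-type characterization and I would cite or reprove it via the inequality $S_k(\bm\lambda) \geq \frac{1}{n}\binom{n}{k}$-type reductions. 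The main obstacle I anticipate is the sharp trigonometric inequality $\cot\!\big(\tfrac{\psi}{n-1}\big) \geq (n-1)\cot\psi$ together with getting the $\bar\Gamma^k$ membership cleanly for all $k$ simultaneously; both are elementary but require care with the boundary behavior (the closed cone, equalities at $\theta_0 = \pi$), and it is precisely here that the hypothesis $\sum_i\operatorname{arccot}\lambda_i \leq \pi$ (rather than $< \pi$) must be used with the right limiting arguments.
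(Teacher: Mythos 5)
The paper does not prove Lemma~\ref{lemma-2-3-1}; it cites it as a result of Wang--Yuan \cite{WangYuan2014}. So the only question is whether your reconstruction is correct.

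Your first two steps are sound. The argument that $\lambda_{n-1}>0$ from the angle constraint is fine (using $n\geq 3$). For the bound $\lambda_1+(n-1)\lambda_n \geq 0$, your chain $\theta_1 \leq \frac{1}{n-1}(\pi-\theta_n)$, hence $\lambda_1 \geq \cot\!\big(\tfrac{\psi}{n-1}\big)$ with $\psi=\pi-\theta_n$, is correct, and the trigonometric inequality $\cot\!\big(\tfrac{\psi}{n-1}\big) \geq (n-1)\cot\psi$ does hold for $\psi\in(0,\pi/2)$: it is equivalent to $(n-1)\tan\!\big(\tfrac{\psi}{n-1}\big)\leq\tan\psi$, which follows from the convexity of $\tan$ on $[0,\pi/2)$ and $\tan(0)=0$, since then $x\mapsto\tan(x)/x$ is nondecreasing. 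This is essentially the standard Wang--Yuan argument for that inequality.

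The genuine gap is in your third step, the $\bar\Gamma^k$ membership. You assert that once $\lambda_1\geq\cdots\geq\lambda_{n-1}>0$ and $\lambda_1+(n-1)\lambda_n\geq 0$ are known, the inequalities $S_k(\bm\lambda)\geq 0$ follow by a ``standard Maclaurin/Newton-type argument,'' but this implication is false as a purely algebraic statement. Take $n=3$, $\bm\lambda=(2,\varepsilon,-1)$ with $\varepsilon>0$ small: then $\lambda_1+2\lambda_3=0$ and $\lambda_2>0$, yet $S_2=\lambda_1\lambda_2+\lambda_1\lambda_3+\lambda_2\lambda_3=\varepsilon-2<0$. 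Of course this triple violates the hypothesis $\sum_i\operatorname{arccot}\lambda_i\leq\pi$, which is exactly the point: the $\bar\Gamma^k$ conclusion (and note it can only hold for $k\leq n-1$, since $S_n=\prod_i\lambda_i$ is negative whenever $\lambda_n<0$) must be extracted directly from the angle condition, not from the two scalar conclusions you derived. For instance, for $n=3$ one uses the trigonometric identity that $\theta_1+\theta_2+\theta_3=\pi$ forces $\cot\theta_1\cot\theta_2+\cot\theta_2\cot\theta_3+\cot\theta_3\cot\theta_1=1$, and monotonicity yields $S_2\geq 1$ when the sum is $\leq\pi$; the general-$n$ case requires an analogous argument at the level of the angle constraint. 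As written, the third part of your proof would not survive scrutiny, and you should replace it with a direct derivation from $\sum_i\theta_i\leq\pi$ in the spirit of Wang--Yuan rather than from the derived inequalities.
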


By the continuity and monotonicity of $\sum^n_{i = 1} \textup{ arccot }  \lambda_i$, we see that $(\lambda_1 ,\cdots, \lambda_n ) \in \Gamma^k$,
\begin{equation*}
\lambda_1 \geq \lambda_2 \geq \cdots \geq \lambda_{n - 1} > 0 
,
\qquad
\text{ and }
\qquad
\lambda_1 +  (n - 1)\lambda_n > 0
,
\end{equation*}
when 
\begin{equation}
\sum^n_{i = 1} \textup{ arccot } \lambda_i < \pi .
\end{equation}

%
%
%
%

In this paper, we shall also need to well utilize the properties of approximation equation~\cite{Chen2021}. 
For convenience, we include the statement here. 
\begin{lemma}[Lemma 5.6 in \cite{Chen2021}]
\label{lemma-2-3}

Let $b$ be a parameter, and we define on $\bar \Gamma_{\theta_0, \Theta_0}$ that
\begin{equation*}
\mathfrak{g} (\bm{\lambda}) 
:= 
\dfrac{\mathfrak{Re} \left(\prod^n_{i = 1} (\lambda_i + \sqrt{-1})\right)}{\mathfrak{Im} \left(\prod^n_{i = 1} (\lambda_i + \sqrt{-1})\right)}
- 
\dfrac{b}{\mathfrak{Im} \left(\prod^n_{i = 1} (\lambda_i + \sqrt{-1})\right)}
.
\end{equation*}
%
%
%
There exist positive constants $\epsilon_1 (n,\theta_0,\Theta_0)$, $\epsilon_2 (n,\Theta_0)$ and $C (n,\Theta_0)$
such that function $\mathfrak{g}$ satisfies the following properties:
if  $b \geq 0$ when $n = 1, 2 ,3$ or $b \geq - \epsilon_1$ when $n \geq 4$, then
\begin{enumerate}
\item $\mathfrak{Im} \left(\prod^n_{i = 1} (\lambda_i + \sqrt{-1})\right) \geq C(n,\Theta_0) $ ;\\

\item $\left| \dfrac{\partial}{\partial \lambda_i} \dfrac{1}{\mathfrak{Im} \left(\prod^n_{i = 1} (\lambda_i + \sqrt{-1})\right)}\right| \leq \dfrac{1}{\sqrt{C (n,\Theta_0)}}  \sqrt{\dfrac{\prod^n_{i = 1} (1 + \lambda^2_i)}{\left(\mathfrak{Im} \left(\prod^n_{i = 1} (\lambda_i + \sqrt{-1})\right)\right)^3}} \dfrac{1}{\sqrt{1 + \lambda^2_i}}$ ;\\

\item $\dfrac{\partial\mathfrak{g}}{\partial\lambda_i} > 0$;\\

\item when $n \geq 4$, 
$\left[\dfrac{\partial^2 \mathfrak{g}}{\partial\lambda_i\partial\lambda_j}\right] \leq - \epsilon_2 \dfrac{\prod^n_{i = 1} (1 + \lambda^2_i)}{\left(\mathfrak{Im} \left(\prod^n_{i = 1} (\lambda_i + \sqrt{-1})\right)\right)^3} \left[ \dfrac{\delta_{ij}}{1 + \lambda^2_i}  \right]$; \\
when $n = 1, 2, 3$, 
$\left[\dfrac{\partial^2 \mathfrak{g}}{\partial\lambda_i\partial\lambda_j}\right] \leq 0$; \\

\item if $\bm{\lambda} \in \bar \Gamma_{\theta_0,\Theta_0}$ and $\mathfrak{g} (\bm{\lambda}) = \cot (\theta_0)$, then $\bm{\lambda} \in \Gamma_{\theta_0,\Theta_0}$;\\

\item for any $\bm{\lambda} \in \Gamma_{\theta_0,\Theta_0}$, the set
$$
\left\{ \bm{\lambda}' \in \Gamma_{\theta_0,\Theta_0} \big| \mathfrak{g} (\bm{\lambda}') = 0, \; \lambda'_i \geq \lambda_i , \; \forall i = 1,2,\cdots, n\right\}
$$
is bounded, where the bound depends on $n$, $\theta_0$, $\Theta_0$, $\bm{\lambda}$, $|b|$;\\

\item $\bar \Gamma_{\theta_0,\Theta_0}$ is convex.

%
%
%
%
%

\end{enumerate}

\end{lemma}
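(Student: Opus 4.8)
The plan is to follow the proof of Lemma~5.6 in~\cite{Chen2021}; the structure is as follows. First I would pass to angular variables: writing
\[
\Theta := \sum_{i=1}^n \textup{arccot}\,\lambda_i,
\qquad
\Theta_j := \sum_{i\neq j}\textup{arccot}\,\lambda_i = \Theta - \textup{arccot}\,\lambda_j,
\qquad
P := \prod_{i=1}^n\sqrt{1+\lambda_i^2},
\]
and $P_j := P/\sqrt{1+\lambda_j^2}$, the identities preceding the lemma give $\mathfrak{Re}\left(\prod_i(\lambda_i+\sqrt{-1})\right)=P\cos\Theta$, $\mathfrak{Im}\left(\prod_i(\lambda_i+\sqrt{-1})\right)=P\sin\Theta$, so that $\mathfrak{g}=\cot\Theta-b/(P\sin\Theta)$; and since $\partial_{\lambda_j}\prod_i(\lambda_i+\sqrt{-1})=\prod_{i\neq j}(\lambda_i+\sqrt{-1})$, one also has $\partial_{\lambda_j}\mathfrak{Re}=P_j\cos\Theta_j$ and $\partial_{\lambda_j}\mathfrak{Im}=P_j\sin\Theta_j$. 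The fact I would use throughout is that on $\bar\Gamma_{\theta_0,\Theta_0}$ one has $0<\Theta_j\le\theta_0<\Theta_0<\pi$ and $0<\Theta\le\Theta_0<\pi$, hence $\sin\Theta>0$, $\sin\Theta_j>0$, and $\Theta-\Theta_j=\textup{arccot}\,\lambda_j\in(0,\pi)$.

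For (1) and (2): as $\mathfrak{Im}=P\sin\Theta$ does not involve $b$, (1) amounts to $P\sin\Theta\ge C(n,\Theta_0)>0$ on $\bar\Gamma_{\theta_0,\Theta_0}$, which I would prove by excluding escape to the boundary — if some $\lambda_i\to-\infty$ then $\Theta\to\pi$, contradicting $\Theta\le\Theta_0$; on bounded regions $P\sin\Theta$ is continuous and strictly positive; and if some $\lambda_i\to+\infty$ the growth of $P$ dominates the decay of $\sin\Theta$, since $P\sin\Theta$ is then comparable at leading order to a partial elementary symmetric polynomial, which is positive and large by Lemma~\ref{lemma-2-3-1}. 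For (2), one computes $\partial_{\lambda_j}(1/\mathfrak{Im})=(\cot\Theta-\lambda_j)/[(1+\lambda_j^2)\mathfrak{Im}]$, and the claimed bound then follows from $|\cot\Theta-\lambda_j|/\sqrt{1+\lambda_j^2}\le 2/\sin\Theta$, from $P\ge 1$, and from~(1), after shrinking $C$.

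For (3) and (4): combining the reductions above gives
\[
\frac{\partial\mathfrak{g}}{\partial\lambda_j}=\frac{P+b\sqrt{1+\lambda_j^2}\,\sin\Theta_j}{P\,(1+\lambda_j^2)\,\sin^2\Theta},
\]
whose numerator is positive when $b\ge 0$ because $\sin\Theta_j>0$, and is $\ge(1-\epsilon_1)P>0$ when $b\ge-\epsilon_1$ with $\epsilon_1<1$, using $\sqrt{1+\lambda_j^2}\le P$; this is~(3). Differentiating once more and again expressing everything through $\Theta$ and the $\Theta_j$, the Hessian of $\mathfrak{g}$ splits into a rank-one part coming from $d\Theta$, a diagonal part, and a $b$-correction; the constraints $\Theta_j\le\theta_0$ and $\Theta\le\Theta_0$ fix the signs so that $D^2\mathfrak{g}\le 0$, and for $n\ge 4$ a strictly negative diagonal term survives with a uniform gap $\epsilon_2(n,\Theta_0)$, which a sufficiently small $|b|\le\epsilon_1$ cannot destroy.

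Finally, for (5)--(7): if $\mathfrak{g}(\bm{\lambda})=\cot\theta_0$ and $b\ge 0$, then $\cot\Theta=\cot\theta_0+b/(P\sin\Theta)\ge\cot\theta_0$, hence $\Theta\le\theta_0<\Theta_0$ and $\Theta_j=\Theta-\textup{arccot}\,\lambda_j<\theta_0$ strictly, so $\bm{\lambda}\in\Gamma_{\theta_0,\Theta_0}$ — for $b<0$, $n\ge 4$, I would combine the smallness of $\epsilon_1$, part~(1), and a continuity argument. For~(6), assuming the set unbounded one passes to a sequence with some $\lambda_i'\to+\infty$; then $\mathfrak{g}(\bm{\lambda}')=0$ reads $\mathfrak{Re}(\prod_i(\lambda_i'+\sqrt{-1}))=b$, which with the monotonicity and sign information of Lemma~\ref{lemma-2-3-1} and $\lambda_k'\ge\lambda_k$ forces a contradiction, the bound depending on the quantities listed. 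And~(7) holds because $\bar\Gamma_{\theta_0,\Theta_0}$ is the intersection of the sublevel sets $\{\sum_{i\neq j}\textup{arccot}\,\lambda_i\le\theta_0\}$ and $\{\sum_i\textup{arccot}\,\lambda_i\le\Theta_0\}$, each convex by the convexity of the supercritical level sets of $\sum\textup{arccot}$. I expect the hard part to be~(4): securing the uniform strict concavity $\epsilon_2(n,\Theta_0)$ for $n\ge 4$ — and understanding why one only gets $D^2\mathfrak{g}\le 0$, without a gap, when $n\le 3$, which is exactly why $b$ may be taken slightly negative only for $n\ge 4$ — requires the most delicate trigonometric bookkeeping; the properness statement~(6) is the next most technical point.
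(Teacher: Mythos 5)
The paper itself does not provide a proof of this lemma: it is quoted as Lemma~5.6 of Chen~\cite{Chen2021}, and the text immediately following the statement reads ``The proof is pretty lengthy. For details, we refer the readers to \cite{Chen2021},'' so there is no in-paper argument to compare against. Your angular-variable outline (the identities $\mathfrak{Re}=P\cos\Theta$, $\mathfrak{Im}=P\sin\Theta$, the formula $\partial\mathfrak{g}/\partial\lambda_j=\bigl(P+b\sqrt{1+\lambda_j^2}\,\sin\Theta_j\bigr)\big/\bigl(P(1+\lambda_j^2)\sin^2\Theta\bigr)$, and the deductions for items (1)--(3), (5), (7)) correctly tracks Chen's proof, which is the same reference the paper relies on; just note that items (4) and (6), which you rightly flag as the technical core, are in your sketch stated only as targets rather than actually carried out, so the plan would still depend on importing those arguments from \cite{Chen2021}.
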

The proof is pretty lengthy. For details, we refer the readers to \cite{Chen2021}.

\medskip
\section{$L^\infty$ estimate}
\label{L-estimate}

In this section, we shall prove the $L^\infty$ estimate for approximation equation.
In this paper, some notations may vary in different places, e.g. $C$, $\hat\chi$, $A_{s,k,\beta}$, etc. 
But these notations are clearly stated in each argument, without any confusion.

From the boundary case of $\mathcal{C}$-subsolution condition~\eqref{boundary-case-2-5}, we know that 
\begin{equation}
\mathfrak{Re} (\chi + \sqrt{-1} \partial\bar\partial v +  \sqrt{-1} \omega)^m
\geq 
\cot (\theta_0) \mathfrak{Im} (\chi + \sqrt{-1} \partial\bar\partial v+ \sqrt{-1} \omega)^m
,
\quad
\forall 1 \leq  m \leq n - 1
.
\end{equation}
%
%
%
%
%
%
Suppose that $\hat\chi := \tilde \chi + t\omega + \sqrt{-1} \partial\bar\partial (\varphi_t - v) \geq 0$ at some point $z \in M$.
By expansion, 
 \begin{equation}
 \label{equality-2-3}
 \begin{aligned}
 LHS
 &=
  \sum^n_{i = 0} C^i_n \hat\chi^i \wedge  \mathfrak{Re}  \left(\chi + \sqrt{-1} \partial\bar\partial v + \sqrt{-1} \omega\right)^{n - i} \\
 &=
  \hat\chi^n + \sum^{n - 1}_{i = 0} C^i_n \hat\chi^i \wedge  \mathfrak{Re}  \left(\chi + \sqrt{-1} \partial\bar\partial v + \sqrt{-1} \omega\right)^{n - i} 
 \end{aligned}
 \end{equation}
and
\begin{equation}
\label{equality-2-4}
\begin{aligned}
RHS
&= 
\cot (\theta_0) \sum^n_{i = 0} C^i_n \hat\chi^i \wedge  \mathfrak{Im}   \left(\chi + \sqrt{-1} \partial\bar\partial v + \sqrt{-1} \omega\right)^{n - i}  + c_t f \omega^n \\
&= 
\cot (\theta_0) \sum^{n - 1}_{i = 0} C^i_n \hat\chi^i \wedge  \mathfrak{Im}   \left(\chi + \sqrt{-1} \partial\bar\partial v + \sqrt{-1} \omega\right)^{n - i}  + c_t f \omega^n 
.
\end{aligned}
\end{equation}
Combining \eqref{equality-2-3} and \eqref{equality-2-4}, we obtain that
\begin{equation}
\label{inequality-3-4}
\begin{aligned}
\hat\chi^n
&= 
\sum^{n - 1}_{i = 0} C^i_n \hat\chi^i \wedge \bigg(\cot (\theta_0) \mathfrak{Im}   \left(\chi + \sqrt{-1} \partial\bar\partial v + \sqrt{-1} \omega\right)^{n - i}  \\
&\qquad\qquad\qquad\qquad - \mathfrak{Re}   \left(\chi + \sqrt{-1} \partial\bar\partial v + \sqrt{-1} \omega\right)^{n - i}   \bigg) + c_t f\omega^n \\
&\leq 
 \cot (\theta_0) \mathfrak{Im}   \left(\chi + \sqrt{-1} \partial\bar\partial v + \sqrt{-1} \omega\right)^{n } - \mathfrak{Re}   \left(\chi + \sqrt{-1} \partial\bar\partial v + \sqrt{-1} \omega\right)^{n }   + c_t f\omega^n \\
 &\leq 
 \left(C (\chi, \omega, v) + c_t f \right) \omega^n 
 .
\end{aligned}
\end{equation}
%
For simplicity, we may use $\varphi_t$ to replace $\varphi_t - v$ in the following argument.  
Indeed, we may assume that $v \equiv 0$ in the following sections, without loss of generality.

The following complex Monge-Amp\`ere equation has a smooth admissible solution~\cite{Yau1978}:
\begin{equation}
	\left(\tilde \chi + t\omega + \sqrt{-1} \partial\bar\partial \psi_{s,k}\right)^n
	=
	\dfrac{\tau_k (-\varphi_t + u_\beta - s)}{A_{s,k,\beta}} \left(C(\chi,\omega,v) + c_t \mathcal{F}_k\right) \omega^n ,
	\quad
	\sup_M \psi_{s,k} = 0 ,
\end{equation}
where $M_s := \{ - \varphi_t + U_t - s > 0\}$, $V_t = \int_M (\tilde \chi + t\omega)^n$
and 
$u_\beta$  is from Lemma~\ref{lemma-Berman-approximation}.
Function $\tau_k : \mathbb{R} \to \mathbb{R}^+$ is a uniformly decreasing sequence of smooth functions such that
\begin{equation*}
	\max\{t,0\} + \dfrac{1}{k} \leq \tau_k (t) \leq \max\{t,0\} + \dfrac{2}{k}, 
\end{equation*}
and $\mathcal{F}_k$ is a uniformly decreasing sequence of smooth functions on $M$ such that
\begin{equation*}
	0 < \mathcal{F}_k - f - \epsilon < \dfrac{1}{k} ,
\end{equation*}
where $1 > \epsilon > 0$.  
Moreover, 
\begin{equation*}
\begin{aligned}
	A_{s,k,\beta} 
	&:= \dfrac{1}{V_t} \int_{M}   \tau_k (- \varphi_t + u_\beta - s)  \left(C(\chi,\omega,v) + c_t \mathcal{F}_k \right) \omega^n &&\qquad\\
	\to 
	A_{s,k}
	&:= \dfrac{1}{V_t} \int_{M}    \tau_k (- \varphi_t + U_t - s)  \left(C(\chi,\omega,v) + c_t \mathcal{F}_k \right) \omega^n
	&&\qquad (\beta \to +\infty)
	\\
	\to
	A_s 
	&:= \dfrac{1}{V_t} \int_{M_s} (- \varphi_t + U_t - s) g_{t,\epsilon} \omega^n  
	&&\qquad (k\to \infty)
	\\
	\leq	E_t
	&:= \dfrac{1}{V_t} \int_{M_0} (-\varphi_t + U_t) g_{t,\epsilon} \omega^n 
	,
\end{aligned}
\end{equation*}
where 
\begin{equation*}
g_{t,\epsilon} := C(\chi,\omega,v) + c_t (f + \epsilon) > 0 .
\end{equation*}
In particular, $\psi_{s,k} \leq  U_t \leq 0$.

We consider the function
\begin{equation*}
	 -   A^{\frac{1}{n + 1}}_{s,k,\beta}\left( \dfrac{n + 1}{n}\left(- \psi_{s,k} + u_\beta + 1\right) +  A_{s,k,\beta}\right)^{\frac{n}{n + 1}} - (\varphi_t - u_\beta + s) .
\end{equation*}
As in \cite{GPT2021}\cite{GPTW2021}\cite{SuiSun2021}, it can be proven that 
\begin{equation*}
	- \varphi_t + u_\beta - s
	\leq 
	  A^{\frac{1}{n + 1}}_{s,k,\beta}\left(\dfrac{n + 1}{n}\left(- \psi_{s,k} + u_\beta + 1\right) +  A_{s,k,\beta}\right)^{\frac{n}{n + 1}} + \Vert u_\beta - U_t\Vert_{L^\infty} ,
\end{equation*}
when $\beta$ is sufficiently large.
Letting $\beta \to +\infty$,
\begin{equation}
\label{inequality-3-5}
	- \varphi_t + U_t - s
	\leq 
	  A^{\frac{1}{n + 1}}_{s,k}\left(\dfrac{n + 1}{n}\left(- \psi_{s,k} + U_t + 1\right) +  A_{s,k}\right)^{\frac{n}{n + 1}}  
	  .
\end{equation}
As shown in \cite{Hormander}\cite{Tian1987}, there exist $\alpha_0 > $ and $C > 0$ such that
\begin{equation}
\label{inequality-3-7}
\begin{aligned}
	&\quad \int_{M_s} \exp \left( \alpha_0 \dfrac{(-\varphi_t + U_t - s)^{\frac{n + 1}{n}}}{A^{\frac{1}{n}}_{s,k}} \right) \omega^n 
	&\leq 
	C \exp \left(\alpha_0 A_{s,k}\right)
	.
\end{aligned}
\end{equation}
Letting $k \to \infty$ in \eqref{inequality-3-7}, 
\begin{equation}
\label{inequality-3-8}
	\int_{M_s} \exp \left(  \dfrac{\alpha_0 (-\varphi_t + U_t - s)^{\frac{n + 1}{n}}}{A^{\frac{1}{n}}_{s}} \right) \omega^n 
	\leq 
	C \exp \left(\alpha_0 A_{s}\right) 
	\leq 
	C \exp \left(\alpha_0 E_t\right)
	.
\end{equation}
By generalized Young's inequality and \eqref{inequality-3-8}, we derive that
\begin{equation}
\label{inequality-3-9}
\begin{aligned}
	&\quad \dfrac{\alpha^p_0}{2^p A^{\frac{p}{n}}_s} \int_{M_s} (-\varphi_t + U_t - s)^{\frac{(n + 1) p}{n}} g_{t,\epsilon} \omega^n 
	\\
	&\leq
	\int_{M_s} g_{t,\epsilon} \ln^p \left(1 + g_{t,\epsilon} \right) \omega^n   + C \int_{M_s} \exp \left( \dfrac{\alpha_0 (-\varphi_t + U_t - s)^{\frac{n + 1}{n}}}{A^{\frac{1}{n}}_s}\right) \omega^n \\
	&\leq 
	\int_{M_s} g_{t,\epsilon} \ln^p g_{t,\epsilon} \omega^n   + 	C \exp \left(\alpha_0 E_t\right)
	.
\end{aligned}
\end{equation}
Applying H\"older inequality with respect to measure $g_{t,\epsilon} \omega^n$ and \eqref{inequality-3-9} to quantity $A_s$,
\begin{equation}
\label{inequality-3-10}
\begin{aligned}
	A_s 
	&\leq 
	\dfrac{1}{V_t} \left(  \int_{M_s} (- \varphi_t + U_t - s)^{\frac{(n + 1) p}{n}} g_{t,\epsilon} \omega^n\right)^{\frac{n}{(n + 1) p}}  \left( \int_{M_s} g_{t,\epsilon} \omega^n\right)^{\frac{(n + 1) p - n}{(n + 1) p}} \\
	&\leq 
	\dfrac{1}{V_t} \left(\dfrac{2 C_t A^{\frac{1}{n}}_s}{\alpha_0}\right)^{\frac{n}{n + 1}} \left(  \int_{M_s} g_{t,\epsilon} \omega^n\right)^{\frac{(n + 1) p - n}{(n + 1) p}}
		,
\end{aligned}
\end{equation}
where
\begin{equation}
\begin{aligned}
	C_t &:= \Bigg( 	\int_{M} g_{t,\epsilon} \ln^p \left(1 + g_{t,\epsilon} \right) \omega^n   + 	C \exp \left(\alpha_0 E_t\right)\Bigg)^{\frac{1}{ p}}
\end{aligned}
\end{equation}
Then by rewriting \eqref{inequality-3-10},
\begin{equation}
	A_s 
	\leq \dfrac{2 C_t}{\alpha_0 V^{ \frac{ n + 1}{n}}_t} \left(  \int_{M_s} g_{t,\epsilon} \omega^n\right)^{1 + \frac{1}{n} - \frac{1}{p}} .
\end{equation}
For any $s', s > 0$,
\begin{equation}
\label{inequality-3-13}
\begin{aligned}
	s' \int_{M_{s + s'}} g_{t,\epsilon} \omega^n 
	&\leq  \dfrac{2 C_t}{\alpha_0 V^{ \frac{ 1}{n}}_t} \left(  \int_{M_s} g_{t,\epsilon} \omega^n\right)^{1 + \frac{1}{n} - \frac{1}{p}} 
		.
\end{aligned}
\end{equation}
By Lemma~\ref{lemma-2-2} and \eqref{inequality-3-13},  we obtain that
\begin{equation}
\label{inequality-3-13-1}
		{-\varphi_t + U_t}  
		\leq    2^{\frac{np + 2 p - 2 n}{p - n}} \dfrac{ C_t}{\alpha_0 V^{ \frac{ 1}{n}}_t} \left(  \int_{M} g_{t,\epsilon} \omega^n\right)^{ \frac{1}{n} - \frac{1}{p}} ,
\end{equation}
when $p > n$.

To find the $t$-independent $L^\infty$ estimate, it suffices to find a uniform upper bound for $E_t$, which is a component of coefficient $C_t$ in \eqref{inequality-3-13-1}.  
We shall adapt the argument of Guo-Phong~\cite{GP202207}. 
From \eqref{inequality-3-5}, on $M_s$
\begin{equation}
\label{inequality-3-14}
\begin{aligned}
	(- \varphi_t + U_t - s)^{\frac{p (n + 1)}{n}}  g_{t,\epsilon}
	&\leq 
	   \left(\dfrac{n + 1}{n}\left(- \psi_{s,k} + U_t + 1\right) +  A_{s,k}\right)^p  A^{\frac{p}{n}}_{s,k}  g_{t,\epsilon}
	   \\
	 &\leq 
	 	C \left((-\psi_{s,k} + U_t + 1)^p  A^{\frac{p}{n}}_{s,k}  + A^{\frac{(n + 1) p}{n}}_{s,k}\right) g_{t,\epsilon}
	  .
\end{aligned}
\end{equation}
Applying generalized Young's inequality to \eqref{inequality-3-14},
\begin{equation}
\label{inequality-3-15}
\begin{aligned}
	&\quad 
	(- \varphi_t + U_t - s)^{\frac{p (n + 1)}{n}}  g_{t,\epsilon}
	\\
	&\leq  
	C \Bigg(g_{t,\epsilon} \ln^p \left(1 + g_{t,\epsilon} \right)  + \exp \left(   \alpha_0 \dfrac{n + 1}{n} \left( - \psi_{s,k} + U_t + 1\right) \right)  \Bigg) A^{\frac{p}{n}}_{s,k}
	+ C A^{\frac{(n + 1) p}{n}}_{s,k}  g_{t,\epsilon}
	.
\end{aligned}
\end{equation}
Integrating \eqref{inequality-3-15} over $M_s$, we obtain that
\begin{equation}
\label{inequality-3-16}
\begin{aligned}
	&\quad 
	\int_{M_s}(- \varphi_t + U_t - s)^{\frac{p (n + 1)}{n}}  g_{t,\epsilon} \omega^n
	\\
	&\leq  
	C A^{\frac{p}{n}}_{s,k} \int_{M_s} g_{t,\epsilon}  \ln^p \left(1 + g_{t,\epsilon} \right) \omega^n    + C A^{\frac{p}{n}}_{s,k} \int_{M_s} \exp \left(   \alpha_0 \dfrac{n + 1}{n} \left( - \psi_{s,k} + U_t + 1\right) \right)    \omega^n \\
	&\qquad 
	+ C A^{\frac{(n + 1) p}{n}}_{s,k} \int_{M_s} g_{t,\epsilon} \omega^n \\
	&\leq 
	C A^{\frac{p}{n}}_{s,k} \int_{M_0} g_{t,\epsilon}  \ln^p \left(1 + g_{t,\epsilon} \right) \omega^n  
	+ C A^{\frac{p}{n}}_{s,k} 
	+ C A^{\frac{(n + 1) p}{n}}_{s,k} \int_{M_s} g_{t,\epsilon} \omega^n 
	.
\end{aligned}
\end{equation}
Letting $k\to \infty$ in \eqref{inequality-3-16},
\begin{equation}
\label{inequality-3-17}
\begin{aligned}
	&\quad 
	\int_{M_s}(- \varphi_t + U_t - s)^{\frac{p (n + 1)}{n}}  g_{t,\epsilon} \omega^n
	\\ 
	&\leq 
	C A^{\frac{p}{n}}_{s} \int_{M_0} g_{t,\epsilon} \ln^p \left(1 + g_{t,\epsilon} \right) \omega^n   + C A^{\frac{p}{n}}_{s}  + C A^{\frac{(n + 1) p}{n}}_s \int_{M_s} g_{t,\epsilon} \omega^n 
	.
\end{aligned}
\end{equation}
Applying \eqref{inequality-3-17} and H\"older inequality to quantity $A_s$, 
\begin{equation*}
\begin{aligned}
	A_s
	&\leq
	\dfrac{1}{V_t} \left(\int_{M_s} (- \varphi_t + U_t - s)^{\frac{p (n + 1)}{n}}  g_{t,\epsilon} \omega^n\right)^{\frac{n}{p (n + 1)}}  \left(\int_{M_s}  g_{t,\epsilon} \omega^n\right)^{1 - \frac{n}{p (n + 1)}} \\
	&\leq \dfrac{C_1 A^{\frac{1}{n + 1}}_{s}}{V_t} \Bigg( \int_{M_0} g_{t,\epsilon} \ln^p \left(1 + g_{t,\epsilon} \right) \omega^n  + 1 + A^p_{s} \int_{M_s} g_{t,\epsilon} \omega^n \Bigg)^{\frac{n}{p (n + 1)}}  \left(\int_{M_s}  g_{t,\epsilon} \omega^n\right)^{1 - \frac{n}{p (n + 1)}} 
	,
\end{aligned}
\end{equation*}
and hence
\begin{equation}
\label{inequality-3-18}
\begin{aligned}
	&\quad 
	\left(V^{\frac{p (n + 1)}{n}}_t  -  C_1 \left(\int_{M_s}  g_{t,\epsilon} \omega^n\right)^{\frac{p (n + 1)}{n} } \right) A^p_s
	\\
	&\leq 
	C_1 \Bigg( \int_{M_0} g_{t,\epsilon} \ln^p \left(1 + g_{t,\epsilon} \right) \omega^n + 1   \Bigg)   \left(\int_{M_s}  g_{t,\epsilon} \omega^n\right)^{\frac{p (n + 1)}{n} - 1}
	.
\end{aligned}
\end{equation}
According to the proof of the decay estimate in Guo-Phong~\cite{GP2022} and the fact revealed by Wang-Yuan~\cite{WangYuan2014} that 
\begin{equation}
\label{inequality-3-1-19}
\left(\chi + \tilde \chi + t\omega + \sqrt{-1}\partial\bar\partial \varphi_t\right) \wedge \omega^{n - 1} 
\geq
0 ,
\end{equation}
we obtain that for $s > 1$, 
\begin{equation}
\label{inequality-3-19}
\begin{aligned}
	\int_{M_s}  g_{t,\epsilon} \omega^n 
	&\leq 
	\int_{M_s} \left(\dfrac{\ln \left(-\varphi_t + U_t \right)}{\ln s} \right)^p g_{t,\epsilon} \omega^n \\
	&\leq 
	\dfrac{2^p}{\ln^p s} \int_{M_s} \bigg( g_{t,\epsilon}  \ln^p \left(1 + g_{t,\epsilon} \right)  + C_p (- \varphi_t + U_t)\bigg)\omega^n \\	
	&\leq 
	\dfrac{C_2}{\ln^p s} \left( \int_{M_0}  g_{t,\epsilon}  \ln^p \left(1 + g_{t,\epsilon} \right) \omega^n + 1\right)
	.
\end{aligned}
\end{equation}
We write down the details  of \eqref{inequality-3-19} here to understand the influence of $\epsilon$. 
Choosing 
\begin{equation*}
\begin{aligned}
	s \geq s_1
	&:=
	1 + \exp \Bigg(\dfrac{ (2 C_1)^{\frac{n}{p (n + 1)}} C_2}{V_t}   \left( \int_{M_0}  g_{t,\epsilon}  \ln^p \left(1 + g_{t,\epsilon} \right) \omega^n + 1\right)   \Bigg)^{\frac{1}{p}}
	,
\end{aligned}
\end{equation*}
we obtain from \eqref{inequality-3-18} and \eqref{inequality-3-19} that
\begin{equation}
\begin{aligned}
	   A_s 
	&\leq 
	\dfrac{(2 C_1)^{\frac{1}{p}}}{V^{\frac{n + 1}{n}}_t} \Bigg( \int_{M_0} g_{t,\epsilon}  \ln^p \left(1 + g_{t,\epsilon} \right) \omega^n + 1   \Bigg)^{\frac{1}{p}}  \left(\int_{M_s}  g_{t,\epsilon} \omega^n\right)^{1 + \frac{1}{n} - \frac{1}{p}} \\
	&\leq 
	\dfrac{(2 C_1)^{\frac{1}{p}} C^{1 + \frac{1}{n} - \frac{1}{p}}_2}{V^{\frac{n + 1}{n}}_t (\ln s)^{p + \frac{p}{n} - 1 } } \left( \int_{M_0} g_{t,\epsilon}  \ln^p \left(1 + g_{t,\epsilon} \right) \omega^n + 1   \right)^{1 + \frac{1}{n} } \\
	&<
	\dfrac{(2 C_1)^{\frac{n}{p^2 (n + 1)}}  }{V^{\frac{1}{p}}_t  } \left( \int_{M_0} g_{t,\epsilon}  \ln^p \left(1 + g_{t,\epsilon} \right) \omega^n + 1   \right)^{\frac{1}{p} } 
	.
\end{aligned}
\end{equation}
Therefore, we have a uniform upper bound for $E_t$, 
\begin{equation}
\label{inequality-3-21}
\begin{aligned}
E_t
&\leq
A_{s_1}   + \dfrac{s_1}{V_t} \int_{M_0}  g_{t,\epsilon} \omega^n \\
&\leq 
\dfrac{(2 C_1)^{\frac{n}{p^2 (n + 1)}}  }{V^{\frac{1}{p}}_0  } \left( \int_{M} g_{1,\epsilon}  \ln^p \left(1 + g_{1,\epsilon} \right) \omega^n + 1   \right)^{\frac{1}{p} } \\
&\qquad +
\dfrac{	1 + \exp \Bigg(\dfrac{ (2 C_1)^{\frac{n}{p (n + 1)}} C_2}{V_0}   \left( \int_{M}  g_{1,\epsilon}  \ln^p \left(1 + g_{1,\epsilon} \right) \omega^n + 1\right)   \Bigg)^{\frac{1}{p}}}{V_0} \int_{M}  g_{1,\epsilon} \omega^n 
.
\end{aligned}
\end{equation}
Substituting \eqref{inequality-3-21} into \eqref{inequality-3-13-1} and letting $\epsilon \to 0+$, we obtain a uniform upper bound for $-\varphi_t + U_t$ when $0 < t \leq 1$.

\begin{theorem}
Suppose that in Equation~\eqref{equation-1-5},  $f \ln^p (1 + f)$ is integrable for some $p > n$. 
Let $\varphi_t$ be the solution to  Equation~\eqref{equation-1-5}. 
Then there exists a constant $K_0 > 0$ such that $- \varphi_t + U_t < K_0$ for all $t \in (0,1]$.
\end{theorem}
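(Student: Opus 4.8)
The whole of Section~\ref{L-estimate} above is, in effect, the proof, and the plan is to organize it into four steps. \textbf{Step 1 (reduction to an auxiliary complex Monge--Amp\`ere equation).} After normalizing $v\equiv 0$, the boundary-case $\mathcal{C}$-subsolution condition~\eqref{boundary-case-2-5} gives $\mathfrak{Re}(\chi+\sqrt{-1}\partial\bar\partial v+\sqrt{-1}\omega)^m\geq\cot(\theta_0)\mathfrak{Im}(\chi+\sqrt{-1}\partial\bar\partial v+\sqrt{-1}\omega)^m$ for $1\leq m\leq n-1$; writing $\hat\chi:=\tilde\chi+t\omega+\sqrt{-1}\partial\bar\partial\varphi_t$ and expanding the binomial at a point where $\hat\chi\geq 0$ yields the pointwise bound~\eqref{inequality-3-4}, i.e. $\hat\chi^n\leq(C(\chi,\omega,v)+c_tf)\omega^n$. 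I would then, for parameters $s,k,\beta$, solve by Yau's theorem the complex Monge--Amp\`ere equation whose right-hand side is the normalized cutoff $\tfrac{\tau_k(-\varphi_t+u_\beta-s)}{A_{s,k,\beta}}(C(\chi,\omega,v)+c_t\mathcal{F}_k)\omega^n$, where $\tau_k$ smooths $\max\{\cdot,0\}$, $\mathcal{F}_k\downarrow f+\epsilon$, and $u_\beta$ is Berman's smooth approximation of the envelope $U_t$ from Lemma~\ref{lemma-Berman-approximation}; this produces an admissible solution $\psi_{s,k}\leq U_t\leq 0$.

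\textbf{Step 2 ($L^\infty$ estimate via comparison, exponential integrability, De Giorgi iteration).} Comparing $\varphi_t$ with the barrier built from $\psi_{s,k}$ and letting $\beta\to\infty$ gives the pointwise inequality~\eqref{inequality-3-5}; feeding this into the H\"ormander--Tian exponential integrability of quasi-plurisubharmonic functions yields~\eqref{inequality-3-8}, and generalized Young's and H\"older's inequalities against the measure $g_{t,\epsilon}\omega^n$ convert it into the recursive level-set inequality~\eqref{inequality-3-13} for $s'\mapsto\int_{M_{s+s'}}g_{t,\epsilon}\omega^n$. Applying the De Giorgi iteration of Lemma~\ref{lemma-2-2}, valid since $p>n$, produces~\eqref{inequality-3-13-1},
\[
-\varphi_t+U_t\ \leq\ 2^{\frac{np+2p-2n}{p-n}}\,\frac{C_t}{\alpha_0 V_t^{1/n}}\Bigl(\int_M g_{t,\epsilon}\,\omega^n\Bigr)^{\frac1n-\frac1p},
\]
so the whole estimate reduces to a bound on $C_t$, hence on $E_t$, that is uniform in $t$.

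\textbf{Step 3 (the crux: uniform control of $E_t$).} This is the step I expect to be the main obstacle, since as $t\to 0^+$ the envelopes $U_t$ increase to $U$ in~\eqref{definition-2-2}, which need not be bounded below, so $-\varphi_t+U_t$ cannot be split into $\|\varphi_t\|_{L^\infty}+\|U_t\|_{L^\infty}$. Following Guo--Phong, I would reinsert~\eqref{inequality-3-5}, apply Young's and H\"older's inequalities to obtain~\eqref{inequality-3-18}, and then use the Guo--Phong decay estimate together with the degenerate trace positivity $(\chi+\tilde\chi+t\omega+\sqrt{-1}\partial\bar\partial\varphi_t)\wedge\omega^{n-1}\geq 0$ of Wang--Yuan, \eqref{inequality-3-1-19}, to establish the logarithmic decay~\eqref{inequality-3-19} of $\int_{M_s}g_{t,\epsilon}\omega^n$ as $s\to\infty$. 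Choosing the explicit threshold $s_1$ and writing $E_t\leq A_{s_1}+\tfrac{s_1}{V_t}\int_{M_0}g_{t,\epsilon}\omega^n$ then yields~\eqref{inequality-3-21}; the delicate point is that every constant here must depend on $t$ only through $V_t$, $c_t$ and $g_{t,\epsilon}$.

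\textbf{Step 4 (uniformity in $t$ and conclusion).} Monotonicity removes the $t$-dependence: $V_t=\int_M(\tilde\chi+t\omega)^n\geq\int_M\tilde\chi^n=V_0>0$ because $[\tilde\chi]$ is nef and big, $c_t\leq c_1$ and hence $g_{t,\epsilon}\leq g_{1,\epsilon}$, while the hypothesis $f\ln^p(1+f)\in L^1$ bounds $\int_M g_{1,\epsilon}\ln^p(1+g_{1,\epsilon})\,\omega^n$. Substituting~\eqref{inequality-3-21} into~\eqref{inequality-3-13-1} and letting $\epsilon\to 0^+$ then gives a constant $K_0>0$, independent of $t$, with $-\varphi_t+U_t<K_0$ for all $t\in(0,1]$. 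Everything outside Step~3 is the now-standard Guo--Phong--Tong argument; the genuinely new difficulty is making the decay-in-$s$ estimate uniform in $t$ in the presence of the degenerate class $[\tilde\chi]$ and the possibly unbounded envelope $U$.
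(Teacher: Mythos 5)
Your four-step plan reproduces, step for step, the argument the paper gives in Section~\ref{L-estimate}: the expansion yielding~\eqref{inequality-3-4}, the auxiliary Monge--Amp\`ere equation with the Berman approximant $u_\beta$ and the cutoffs $\tau_k$, $\mathcal{F}_k$, the comparison~\eqref{inequality-3-5} and H\"ormander--Tian exponential integrability leading to~\eqref{inequality-3-8}--\eqref{inequality-3-13} and De Giorgi iteration, and then the Guo--Phong entropy argument using the trace positivity~\eqref{inequality-3-1-19} to control $E_t$ uniformly in $t$ via the monotonicity of $V_t$ and $c_t$. This is not a different route; it is the paper's proof, correctly identified, with the crux (Step~3, the $t$-uniform bound on $E_t$) rightly singled out.
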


\begin{remark}
Given that $U$ is bounded, we can conclude that $\varphi_t$ is uniformly bounded for $0 < t \leq 1$.
\end{remark}


\medskip

\medskip
\section{Solvability of approximation equation}
\label{solvability}

In the following argument, 
we require that $[X_{i\bar j}]$ is diagonal and $\omega_{i\bar j} = \delta_{ij}$ at the point we do calculation. 
We may further assume that $X_{1\bar 1} \geq \cdots \geq X_{n\bar n}$, for simplicity.

Let $I$ be the set of $t$ such that there exists a smooth solution $\varphi_t$ to Equation~\eqref{equation-1-5} satisfying
\begin{equation} 
\bm{\lambda} (\chi + \tilde \chi + t \omega + \sqrt{-1} \partial\bar\partial \varphi_t) 
\in
\Gamma_{\theta_0,\Theta_0}
.
\end{equation} 
We plan to prove that $(0,+\infty) \subset I$, 
which is equivalent to $[2\epsilon,+\infty) \subset I$ for any small $\epsilon > 0$.

Since $\bm{\lambda} (\chi + \tilde \chi + \omega) \in \Gamma_{\theta_0,\Theta_0}$ and $b_t f \geq 0$, there exists a smooth solution $\varphi_1$ to Equation~\eqref{equation-1-5} 
with $\bm{\lambda} (\chi + \tilde \chi + \omega + \sqrt{-1} \partial\bar\partial \varphi_1) \in \Gamma_{\theta_0,\Theta_0}$,
and then we know that $[1,+\infty) \subset I$.
Therefore, we only need to show that $[3\epsilon , 1] \subset I$ for any $\epsilon \in \left(0,\dfrac{1}{3}\right)$.

There is a smooth function $v_\epsilon$ such that
\begin{equation}
	\tilde \chi + \epsilon \omega + \sqrt{-1} \partial\bar\partial v_\epsilon 
	> 
	0
	,
\end{equation}
as $\tilde \chi$ is nef.
Consequently,
\begin{equation}
	\tilde \chi + t\omega + \sqrt{-1} \partial\bar\partial v_\epsilon 
	> 
	(t - \epsilon) \omega 
	\geq 
	2 \epsilon \omega
	,
	\qquad 
	\forall t \in [3\epsilon , +\infty)
	.
\end{equation}
Indeed, $v_\epsilon$ is an $\mathcal{C}$-subsolution. 
Then, 
\begin{equation}
\label{inequality-7-12}
\begin{aligned}
\sum_i \dfrac{\partial\mathfrak{f}}{\partial \lambda_i}  (v_{\epsilon, i\bar i} - \varphi_{t,i\bar i})
&=
\sum_i \dfrac{\partial\mathfrak{f}}{\partial \lambda_i} (\chi_{i\bar i} + \tilde \chi_{i\bar i} + t  + v_{\epsilon,i\bar i} - X_{i\bar i})
\\
&\geq
\sum_i \dfrac{\partial\mathfrak{f}}{\partial \lambda_i} (\chi_{i\bar i} + 2 \epsilon   - X_{i\bar i})
\\
&=
\sum_i \dfrac{\partial\mathfrak{f}}{\partial \lambda_i} \tilde X_{i\bar i} - \sum_i \dfrac{\partial\mathfrak{f}}{\partial \lambda_i} X_{i\bar i} - N \dfrac{\partial\mathfrak{f}}{\partial \lambda_1} + \epsilon \sum_i \dfrac{\partial\mathfrak{f}}{\partial \lambda_i}
\\
&\geq
\mathfrak{F} (\tilde X) - \cot (\theta_0) - N \dfrac{\partial\mathfrak{f}}{\partial \lambda_1} + \epsilon \sum_i \dfrac{\partial\mathfrak{f}}{\partial \lambda_i}
,
\end{aligned} 
\end{equation}
where
$\tilde X := \chi + \epsilon \omega  + N \sqrt{-1} d z^1 \wedge d \bar z^1 $. 
Since
\begin{equation}
\label{inequality-4-7-7}
\sum_{i \neq j} \textup{ arccot } \left(\lambda_i (\tilde X)\right) <  \sum_{i\neq j} \textup{ arccot } \left(\lambda_i (\chi)\right)  \leq \theta_0  ,
\end{equation}
we can derive that
\begin{equation}
\label{inequality-4-7-8}
\sum_i \textup{ arccot } \left(\lambda_i (\tilde X)\right) < \theta_0 ,
\end{equation}
when $N$ is sufficiently large.
The last line of \eqref{inequality-7-12} is due to concavity of $\mathfrak{f}$, \eqref{inequality-4-7-7} and \eqref{inequality-4-7-8}. 
By Inequality~\eqref{boundary-case-2-5},  
we can see that there is a constant $\xi \in (0, 1)$ such that
\begin{equation}
\begin{aligned}
\sum_{k\neq j} \textup{arccot } \left(\lambda_k \left(\chi \right) + \epsilon\right) 
< 
\textup{arccot }\left(\cot (\theta_0) + 2 \xi\right),\qquad \forall j = 1,2,\cdots, n.
\end{aligned}
\end{equation}
Therefore, 
\begin{equation}
\label{inequality-7-15}
\begin{aligned}
&\quad
	\mathfrak{Re} \left(\tilde X + \sqrt{-1} \omega\right)^n - \left(\cot (\theta_0) + \xi\right) \mathfrak{Im} \left(\tilde X + \sqrt{-1} \omega\right)^n  
	- c_t f \omega^n
	\\
	&\geq
	\mathfrak{Re} \left(\chi + \epsilon \omega   + \sqrt{-1} \omega\right)^n - (\cot (\theta_0) + \xi) \mathfrak{Im} \left(\chi + \epsilon \omega  + \sqrt{-1} \omega\right)^n 
	- c_t f \omega^n \\
	&\qquad
	+
	n   N \sqrt{-1} d z^1 \wedge d\bar z^1 \\
	&\qquad\qquad
	\wedge \left(\mathfrak{Re} \left(\chi + \epsilon \omega   + \sqrt{-1} \omega\right)^{n - 1} - (\cot (\theta_0) + \xi) \mathfrak{Im} \left(\chi + \epsilon \omega   + \sqrt{-1} \omega\right)^{n - 1}\right)
	\\
	&>
	\mathfrak{Re} \left(\chi + \epsilon \omega   + \sqrt{-1} \omega\right)^n - (\cot (\theta_0) + \xi) \mathfrak{Im} \left(\chi + \epsilon \omega  + \sqrt{-1} \omega\right)^n
	- c_t f \omega^n
	\\
	&\qquad
	+
	\xi n N \sqrt{-1} d z^1 \wedge d\bar z^1  
	\wedge  \mathfrak{Im} \left(\chi + \epsilon \omega  + \sqrt{-1} \omega\right)^{n - 1} 
	\\
	&> 0
	,
\end{aligned}
\end{equation}
when $N  $ is sufficiently large.
Inequality~\eqref{inequality-7-15} can be rewritten as
\begin{equation}
\label{inequality-7-15-1}
	\mathfrak{F} (\tilde X) > \cot (\theta_0) + \xi .
\end{equation}
Substituting \eqref{inequality-7-15-1} into \eqref{inequality-7-12}, 
\begin{equation}
\label{inequality-7-16}
\sum_i \dfrac{\partial\mathfrak{f}}{\partial \lambda_i}  (v_{\epsilon,i\bar i} - \varphi_{t,i\bar i})
>
\xi - N \dfrac{\partial\mathfrak{f}}{\partial \lambda_1} + \epsilon \sum_i \dfrac{\partial\mathfrak{f}}{\partial \lambda_i} 
.
\end{equation}
Supposing that
\begin{equation}
	\dfrac{\partial \mathfrak{f}}{\partial \lambda_1} 
	\leq 
	\dfrac{1}{2 N}
	\min \left\{\epsilon,\xi\right\} \left(1 + \sum_i \dfrac{\partial \mathfrak{f}}{\partial \lambda_i}\right)
	,
\end{equation}
we derive from \eqref{inequality-7-16} that
\begin{equation}
\label{inequality-7-15-2}
	\sum_i \dfrac{\partial\mathfrak{f}}{\partial \lambda_i}  (v_{\epsilon,i\bar i} - \varphi_{t,i\bar i})
	> 
	\dfrac{1}{2} \min \left\{\epsilon,\xi\right\} \left(1 + \sum_i \dfrac{\partial \mathfrak{f}}{\partial \lambda_i}\right)
	.
\end{equation}
%
%
Inequality~\eqref{inequality-7-15-2} can help us to derive partial $C^2$ estimates and then $C^\infty$ estimates, as in \cite{Szekelyhidi2018}\cite{CollinsJacobYau2020}\cite{Chen2021}.

By the implicit function theorem, $I \cap [2\epsilon, 1]$ is non-empty and open. 
If there is a sequence $\{t_i\} \subset I \cap [2\epsilon,1]$ such that 
$\lim_{i\to\infty} t_i = T' \in [2\epsilon, 1]$,
then we can take a limit $\varphi_{T'} \in C^\infty (M)$ by Arzela-Ascoli Theorem such that
\begin{equation}
\begin{aligned}
&\mathfrak{Re} \left(\chi + \tilde \chi + T' \omega + \sqrt{-1} \partial\bar\partial \varphi_{T'} + \sqrt{-1} \omega\right)^n \\
&\qquad = 
\cot (\theta_0) \mathfrak{Im} \left(\chi + \tilde \chi + T' \omega + \sqrt{-1} \partial\bar\partial \varphi_{T'} + \sqrt{-1} \omega\right)^n + c_{T'} f \omega^n .
\end{aligned}
\end{equation}
In particular,
$\bm{\lambda} (\chi + \tilde \chi + T' \omega + \sqrt{-1} \partial\bar\partial \varphi_{T'})  \in \Gamma_{\theta_0,\Theta_0}$.
We obtain $ [2\epsilon , 1 ]  \subset I$ and hence $(0,+ \infty) \subset I$.

In previous works, it is required that
\begin{equation}
\label{inequality-4-17}
\sum^n_{i = 1} \textup{ arccot } \left(\lambda_i (\chi + \sqrt{-1} \partial\bar\partial v)\right) < \Theta_0,
\end{equation}
in addition to $\mathcal{C}$-subsolution~\eqref{inequality-2-2}.
According to the above argument, we can remove condition~\eqref{inequality-4-17}. 
\begin{proposition}
\label{proposition-4-1}
Suppose that there exists a real-valued $C^2$ function $v$ satisfying $\mathcal{C}$-subsolution condition~\eqref{inequality-2-2}. 
Then there exists a unique smooth solution $\varphi$ solving the supercritical phase case of Equation~\eqref{equation-1-1}.
\end{proposition}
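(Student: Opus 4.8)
The plan is to run the standard continuity-method/a~priori-estimate machinery for the supercritical deformed Hermitian-Yang-Mills equation, now powered by the improved ``$\mathcal{C}$-subsolution suffices'' argument developed in \eqref{inequality-7-12}--\eqref{inequality-7-15-2}, so that the extra hypothesis \eqref{inequality-4-17} is genuinely unnecessary. First I would set up the continuity path: for $s\in[0,1]$ consider the family of equations obtained by deforming $\chi$ towards a reference form for which a solution is known (e.g.\ adding a large multiple of $\omega$, which pushes $\bm{\lambda}$ deep into $\Gamma_{\theta_0}$ where the constant function solves the equation), with the normalization $\sup_M\varphi_s=0$ and the cohomological compatibility condition adjusted by an additive constant as in the construction of $c_t$. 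Let $I\subset[0,1]$ be the set of $s$ admitting a smooth solution $\varphi_s$ with $\bm{\lambda}(\chi_s+\sqrt{-1}\partial\bar\partial\varphi_s)$ in the supercritical cone; $I$ is nonempty, and openness follows from the implicit function theorem together with the strict concavity/ellipticity of $\mathfrak{f}$ along the path, exactly as invoked after \eqref{inequality-7-15-2}.

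The heart of the matter is closedness, i.e.\ the uniform a~priori estimates. The zeroth-order estimate $\|\varphi_s\|_{L^\infty}\le C$ is the classical one for the dHYM equation under a $\mathcal{C}$-subsolution hypothesis (and does not need \eqref{inequality-4-17}); alternatively one can cite the $L^\infty$ bound established in Section~\ref{L-estimate} applied in the non-degenerate setting $t=1$. For the second-order estimate I would use the test function and maximum-principle argument of Sz\'ekelyhidi and Collins--Jacob--Yau, but at the point where one usually needs an upper barrier coming from \eqref{inequality-4-17} to control the largest eigenvalue, I instead invoke \eqref{inequality-7-15-2}: the key inequality
\[
\sum_i \dfrac{\partial\mathfrak{f}}{\partial\lambda_i}\,(v_{i\bar i}-\varphi_{s,i\bar i})
\;>\;
\dfrac{1}{2}\min\{\epsilon,\xi\}\Bigl(1+\sum_i\dfrac{\partial\mathfrak{f}}{\partial\lambda_i}\Bigr)
\]
holds purely from the $\mathcal{C}$-subsolution condition \eqref{inequality-2-2} via the perturbed matrix $\tilde X=\chi+\epsilon\omega+N\sqrt{-1}\,dz^1\wedge d\bar z^1$, with $N$ chosen large using \eqref{inequality-4-7-7}--\eqref{inequality-4-7-8}. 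This furnishes exactly the coercive term needed to close the $C^2$ estimate by the unit-eigenvalue-direction trick (controlling $\partial\mathfrak{f}/\partial\lambda_1$), after which the complex Evans--Krylov theorem and bootstrapping give $C^\infty$ bounds. Then Arzel\`a--Ascoli yields the limit $\varphi_{T'}\in C^\infty(M)$ with eigenvalues still in the (closed, hence by Lemma~\ref{lemma-2-3}(5)-type openness, open) supercritical cone, so $I$ is closed and $I=[0,1]$.

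Finally, uniqueness: if $\varphi,\varphi'$ both solve \eqref{equation-1-1}, then at the maximum of $\varphi-\varphi'$ the concavity of $\mathfrak{f}$ and the equality $\mathfrak{F}=\mathfrak{F}$ force $\partial\mathfrak{f}/\partial\lambda_i(\varphi)\,(\varphi_{i\bar i}-\varphi'_{i\bar i})\ge0$ while the Hessian ordering there gives the reverse, and the strict ellipticity plus the normalization $\sup\varphi=\sup\varphi'=0$ pin down $\varphi\equiv\varphi'$; this is standard and identical to \cite{CollinsJacobYau2020}. The main obstacle I anticipate is the second-order estimate in the supercritical (non-hypercritical) regime, where the function $\mathfrak{f}$ is concave but not uniformly so and the eigenvalue cone is only ``supercritical'' rather than convex with good structure; the payoff of \eqref{inequality-7-15-2} is precisely that it supplies the missing positivity without the auxiliary bound \eqref{inequality-4-17}, so the argument should go through verbatim along the lines of \cite{Szekelyhidi2018}\cite{CollinsJacobYau2020}\cite{Chen2021}.
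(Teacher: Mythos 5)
Your proposal is correct and follows essentially the same route as the paper: a continuity method along $\chi+t\omega$, with the crucial observation that the perturbed form $\tilde X=\chi\pm\epsilon\omega+N\sqrt{-1}\,dz^1\wedge d\bar z^1$ (for $N$ large) automatically has $\sum_i\textup{arccot}\,\lambda_i(\tilde X)<\theta_0<\Theta_0$ and satisfies $\mathfrak{F}(\tilde X)>\cot(\theta_0)+\xi$, so that \eqref{inequality-7-15-2} supplies the coercivity needed in the $C^2$ estimate and renders \eqref{inequality-4-17} superfluous. The only cosmetic difference is that in the proof of this proposition the paper takes $\tilde X=\chi-\epsilon\omega+N\sqrt{-1}\,dz^1\wedge d\bar z^1$ (budgeting the positivity against $t\omega$ rather than against $\tilde\chi+t\omega+\sqrt{-1}\partial\bar\partial v_\epsilon$ as in \eqref{inequality-7-12}), which does not affect the substance of the argument.
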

\begin{proof}

We shall simply consider that for $t \in (0,T]$,
\begin{equation}
\label{equation-4-7-17}
\begin{aligned}
&\mathfrak{Re} \left(\chi + t\omega + \sqrt{-1} \partial\bar\partial \varphi_t + \sqrt{-1} \omega\right)^n  
\\
&= 
\cot (\theta_0) \mathfrak{Im} \left(\chi + t\omega + \sqrt{-1} \partial\bar\partial \varphi_t + \sqrt{-1} \omega\right)^n + c_t  \omega^n ,
\qquad
\sup_M \varphi_t = 0
,
\end{aligned}
\end{equation}
where
\begin{equation}
\sum^n_{i = 1} \textup{ arccot } \left(\lambda_i  (\chi + T \omega )\right) < \Theta_0 .
\end{equation}
Let $I$ be the set of $t$ such that there exists a smooth solution $\varphi_t$. 
It is easy to see that $T \in I$ by Chen~\cite{Chen2021}.
Define
\begin{equation}
\mathfrak{f} := \cot \left(\sum_k \textup{arccot }\lambda_k\right) - \dfrac{c_t}{\sin \left(\sum_k \textup{arccot } \lambda_k\right) \prod^n_{k = 1} \sqrt{1 + \lambda^2_k}} ,
\end{equation}
and Equation~\eqref{equation-4-7-17} can be expressed as
\begin{equation}
	\mathfrak{F} (\chi + t\omega + \sqrt{-1}\partial\bar\partial \varphi_t) 
	:=
	\mathfrak{f} (\bm{\lambda} (\chi + t\omega + \sqrt{-1} \partial\bar\partial \varphi_t)) 
	= \cot(\theta_0) .
\end{equation}

From \eqref{inequality-2-2}, we know that there is a constant $\epsilon > 0$ and $\xi > 0$ such that
\begin{equation}
\sum_{i\neq j} \textup{ arccot } (\lambda_i (\chi - \epsilon \omega)) < \textup{arccot } ( \cot(\theta_0) + 2 \xi), \qquad \forall j = 1,2,\cdots,n.
\end{equation}
When $N$ is sufficiently large,
\begin{equation}
	\sum_i \textup{arccot } (\lambda_i (\tilde X)) < \theta_0 ,
\end{equation}
where $\tilde X := \chi - \epsilon \omega + N \sqrt{-1} d z^1 \wedge d\bar z^1$. 
Similar to \eqref{inequality-7-15}, 
\begin{equation}
\begin{aligned}
&\quad
	\mathfrak{Re} \left(\tilde X + \sqrt{-1} \omega\right)^n - \left(\cot (\theta_0) + \xi\right) \mathfrak{Im} \left(\tilde X + \sqrt{-1} \omega\right)^n  
	- c_t  \omega^n
	\\
	&\geq
	\mathfrak{Re} \left(\chi - \epsilon \omega   + \sqrt{-1} \omega\right)^n - (\cot (\theta_0) + \xi) \mathfrak{Im} \left(\chi - \epsilon \omega  + \sqrt{-1} \omega\right)^n 
	- c_t  \omega^n \\
	&\qquad
	+
	n   N \sqrt{-1} d z^1 \wedge d\bar z^1 \\
	&\qquad\qquad
	\wedge \left(\mathfrak{Re} \left(\chi - \epsilon \omega   + \sqrt{-1} \omega\right)^{n - 1} - (\cot (\theta_0) + \xi) \mathfrak{Im} \left(\chi - \epsilon \omega   + \sqrt{-1} \omega\right)^{n - 1}\right)
	\\
	&>
	\mathfrak{Re} \left(\chi - \epsilon \omega   + \sqrt{-1} \omega\right)^n - (\cot (\theta_0) + \xi) \mathfrak{Im} \left(\chi - \epsilon \omega  + \sqrt{-1} \omega\right)^n
	- c_t  \omega^n
	\\
	&\qquad
	+
	\xi n N \sqrt{-1} d z^1 \wedge d\bar z^1  
	\wedge  \mathfrak{Im} \left(\chi - \epsilon \omega  + \sqrt{-1} \omega\right)^{n - 1} 
	\\
	&> 0
	,
\end{aligned}
\end{equation}
when $N$ is sufficiently large. 
Therefore
\begin{equation}
\begin{aligned}
	- \sum_i \dfrac{\partial \mathfrak{f}}{\partial \lambda_i} \varphi_{t,i\bar i}
	&\geq 
	\sum_i \dfrac{\partial \mathfrak{f}}{\partial \lambda_i} \left(\chi_{i\bar i} - \epsilon\right) + N \dfrac{\partial \mathfrak{f}}{\partial \lambda_1}  - \sum_i \dfrac{\partial \mathfrak{f}}{\partial \lambda_i} X_{i\bar i} + \epsilon \sum_i \dfrac{\partial\mathfrak{f}}{\partial\lambda_i} - N \dfrac{\partial\mathfrak{f}}{\partial\lambda_1}
	\\
	&\geq 
	\mathfrak{F} (\tilde X) - \cot (\theta_0) + \epsilon \sum_i \dfrac{\partial\mathfrak{f}}{\partial\lambda_i} - N \dfrac{\partial\mathfrak{f}}{\partial\lambda_1}
	\\
	&> 
	\xi + \epsilon \sum_i \dfrac{\partial\mathfrak{f}}{\partial\lambda_i} - N \dfrac{\partial\mathfrak{f}}{\partial\lambda_1}
	.
\end{aligned}
\end{equation}
For $t > 0$, 
we have
\begin{equation*}
	\sum_i \dfrac{\partial\mathfrak{f}}{\partial \lambda_i}  (v_{\epsilon,i\bar i} - \varphi_{t,i\bar i})
	> 
	\dfrac{1}{2} \min \left\{\epsilon,\xi\right\} \left(1 + \sum_i \dfrac{\partial \mathfrak{f}}{\partial \lambda_i}\right)
	,
\end{equation*}
when
\begin{equation*}
	\dfrac{\partial \mathfrak{f}}{\partial \lambda_1} 
	\leq 
	\dfrac{1}{2 N}
	\min \left\{\epsilon,\xi\right\} \left(1 + \sum_i \dfrac{\partial \mathfrak{f}}{\partial \lambda_i}\right)
	.
\end{equation*}

Similar to the previous continuity method argument, we can obtain that $[0,T] \subset I$.

\end{proof}

\medskip
\section{Stability estimate for $n\geq 4$}
\label{stability}

In this section, we shall study the stability estimate for approximation equation~\eqref{equation-1-5} for the case of $n \geq 4$.
For $0 < t \leq 1$, we assume that
\begin{equation}
\label{equality-4-1}
\begin{aligned}
&\mathfrak{Re} \left(\chi + \tilde \chi + t \omega + \sqrt{-1} \partial\bar\partial \varphi_1 + \sqrt{-1} \omega\right)^n \\
&\qquad = 
\cot (\theta_0) \mathfrak{Im} \left(\chi + \tilde \chi + t \omega + \sqrt{-1} \partial\bar\partial \varphi_1 + \sqrt{-1} \omega\right)^n + c_t f_1 \omega^n ,
\qquad \sup_M \varphi_1 = 0,
\end{aligned}
\end{equation}
and
\begin{equation}
\label{equality-4-2}
\begin{aligned}
&\mathfrak{Re} \left(\chi + \tilde \chi + t  \omega + \sqrt{-1} \partial\bar\partial \varphi_2 + \sqrt{-1} \omega\right)^n \\
&\qquad = 
\cot (\theta_0) \mathfrak{Im} \left(\chi + \tilde \chi + t  \omega + \sqrt{-1} \partial\bar\partial \varphi_2 + \sqrt{-1} \omega\right)^n + c_t f_2 \omega^n ,
\qquad \sup_M \varphi_2 = 0,
\end{aligned}
\end{equation}
for nonnegative smooth function $f_1 \in L^q$ and $f_2 \in L^1$ with $\int_M f_1\omega^n = \int_M f_2 \omega^n = \int_M \omega^n$.
%
%
%
%
%
%
In case $\Vert (\varphi_2 - \varphi_1) \Vert_{L^{q^*}} = 0$, it is easy to see that $\varphi_2 \leq \varphi_1$, which implies that the the stability estimate holds true for any positive coefficient. In the remaining of stability estimate, we only need to prove the stability estimate in the case of $\Vert (\varphi_2 - \varphi_1)^+ \Vert_{L^{q^*}} > 0$.

\medskip
\subsection{An intermediate function}
Choosing an appropriate positive constant $\sigma_1 < \epsilon_1$, 
we can define constant $ s_1 \in \left(\dfrac{1}{2},1\right)$ by
\begin{equation}
\mathfrak{Re} \int_M\left(\chi + s_1 \tilde \chi + \sqrt{-1} \omega\right)^n 
= 
\cot (\theta_0) \mathfrak{Im} \int_M\left(\chi  + s_1 \tilde \chi + \sqrt{-1} \omega\right)^n 
- \sigma_1 \int_M \omega^n
,
\end{equation}
and hence constant $ T_1 > 0$ by
\begin{equation}
\mathfrak{Re} \int_M\left(\chi + s_1 \tilde \chi + T_1 \omega + \sqrt{-1} \omega\right)^n 
= 
\cot (\theta_0) \mathfrak{Im} \int_M\left(\chi  + s_1 \tilde \chi + T_1 \omega + \sqrt{-1} \omega\right)^n 
.
\end{equation}
We try to solve the supercritical phase case of
\begin{equation}
\label{equation-4-3}
\begin{aligned}
&\quad
\mathfrak{Re}  \left(\chi + s_1 ( \tilde \chi + t \omega) + \sqrt{-1} \partial\bar\partial v_t + \sqrt{-1} \omega\right)^n \\
&= 
\cot (\theta_0) \mathfrak{Im}  \left(\chi  + s_1 (\tilde \chi + t \omega) + \sqrt{-1} \partial\bar\partial v_t + \sqrt{-1} \omega\right)^n 
- \sigma_1  \omega^n + b_t \omega^n
,
\end{aligned}
\end{equation}
where $\sup_M v_t = 0$ and
\begin{equation}
\begin{aligned}
&\quad
\mathfrak{Re}  \int_M \left(\chi + s_1 (\tilde \chi + t \omega)   + \sqrt{-1} \omega\right)^n \\
&= 
\cot (\theta_0) \mathfrak{Im}  \int_M \left(\chi  + s_1 (\tilde \chi + t \omega)  + \sqrt{-1} \omega\right)^n 
- \sigma_1  \int_M \omega^n + b_t \int_M\omega^n
.
\end{aligned}
\end{equation}
In particular, we know that $b_t > 0$ and $\dfrac{\partial b_t}{\partial t} > 0 $ for $t > 0$ as is $c_t$. 
As in Section~\ref{L-estimate}, we can derive that there is a constant $K_1 > 0$ such that
\begin{equation}
	- v_t + s_1 U_t \leq K_1 
\end{equation}
for any $0 < t \leq 1$.

Similar to Section~\ref{solvability}, we can prove that Equation~\eqref{equation-4-3} admits a smooth solution for all $t > 0$. 
Let $I$ be the set of $t$ such that there exists a smooth solution $v_t$ to Equation~\eqref{equation-4-3} satisfying
\begin{equation}
\label{4-5}
\bm{\lambda} (\chi + s_1 ( \tilde \chi + t \omega ) + \sqrt{-1} \partial\bar\partial v_t) \in \Gamma_{\theta_0,\Theta_0} .
\end{equation} 
It suffices to prove that $(0,+\infty) \subset I$.
If $t \geq \dfrac{T_1}{s_1}$, 
there is a unique smooth solution to Equation~\eqref{equation-4-3}, 
by the works of Collins-Jacob-Yau~\cite{CollinsJacobYau2020},Chen~\cite{Chen2021} and Proposition~\ref{proposition-4-1}. 
So $ \left[\dfrac{T_1}{s_1} , +\infty\right) \subset I$, and we only need to prove that 
$\left[\epsilon , \dfrac{T_1}{s_1}\right]  \subset I$ for any positive constant $  \epsilon < \dfrac{T_1}{s_1} $.
Given that $t \in \left[\epsilon , \dfrac{T_1}{s_1}\right]$, 
it is easy to see that $b_t - \sigma_1 \subset [b_\epsilon- \sigma_1, 0]$,
and consequently there are uniform $C^\infty$ a priori estimates for smooth solutions to Equation~\eqref{equation-4-3} satisfying the relation~\eqref{4-5}.
By the implicit function theorem, $I \cap \left[\epsilon, \dfrac{T_1}{s_1}\right]$ is non-empty and open.
If there is a sequence $\{t_i\} \subset I \cap \left[\epsilon, \dfrac{T_1}{s_1}\right]$ such that $\lim_{i\to \infty} t_i = T' \in \left[\epsilon, \dfrac{T_1}{s_1} \right]$, 
then we can take a  limit $v_{T'}$ such that
\begin{equation}
\begin{aligned}
&\quad
\mathfrak{Re}  \left(\chi + s_1 (\tilde \chi + T' \omega ) + \sqrt{-1} \partial\bar\partial v_{T'} + \sqrt{-1} \omega\right)^n \\
&= 
\cot (\theta_0) \mathfrak{Im}  \left(\chi  + s_1 ( \tilde \chi + T' \omega ) + \sqrt{-1} \partial\bar\partial v_{T'} + \sqrt{-1} \omega\right)^n 
- \sigma_1  \omega^n + b_{T'} \omega^n
,
\end{aligned}
\end{equation}
since there are uniform $C^\infty$ a priori estimates for $\{v_{t_i}\}$.
In particular,
\begin{equation}
\bm{\lambda} (\chi + s_1 (\tilde \chi + T'\omega ) + \sqrt{-1} \partial\bar\partial v_{T'}) \in \bar \Gamma_{\theta_0, \Theta_0}.
\end{equation}
By Chen~\cite{Chen2021}, $\bm{\lambda} (\chi + s_1 (\tilde \chi + T' \omega ) + \sqrt{-1} \partial\bar\partial v_{T'}) \in \Gamma_{\theta_0, \Theta_0}$. 
So, $T' \in I \cap \left[\epsilon, \dfrac{T_1}{s_1}\right]$ and $I \cap \left[\epsilon, \dfrac{T_1}{s_1}\right]$ is closed. 
Therefore, $I \cap \left[\epsilon, \dfrac{T_1}{s_1}\right] = \left[\epsilon, \dfrac{T_1}{s_1}\right]$.

\medskip

\subsection{The estimate of difference $v_t - \varphi_1$}
The argument is very similar to that of $L^\infty$ estimate in Section~\ref{L-estimate}. 
We shall reuse some notations without mentioning the definitions, e.g. $\tau_k$, $U_t$, etc.

Comparing \eqref{equality-4-1} and \eqref{equation-4-3}, 
\begin{equation}
\begin{aligned}
	c_t f_1 \omega^n
	&=
\mathfrak{Re} \left(\chi + \tilde \chi + t \omega + \sqrt{-1} \partial\bar\partial \varphi_1 + \sqrt{-1} \omega\right)^n \\
&\qquad 
-
\cot (\theta_0) \mathfrak{Im} \left(\chi + \tilde \chi + t \omega + \sqrt{-1} \partial\bar\partial \varphi_1 + \sqrt{-1} \omega\right)^n 
\\
	&= 
\mathfrak{Re} \left(\chi + s_1 (\tilde \chi + t \omega) + \sqrt{-1} \partial\bar\partial v_t + \hat\chi + \sqrt{-1} \omega\right)^n \\
&\qquad 
-
\cot (\theta_0) \mathfrak{Im} \left(\chi + s_1(\tilde \chi + t \omega) + \sqrt{-1} \partial\bar\partial v_t + \hat\chi + \sqrt{-1} \omega\right)^n 
\\
	&=
	\sum^{n - 1}_{i = 1} C^i_n \hat\chi^i \wedge \Bigg(
	\mathfrak{Re} (\chi + s_1 (\tilde \chi + t\omega) + \sqrt{-1} \partial\bar\partial v_t + \sqrt{-1} \omega)^{n - i}
	\\
	&\qquad \qquad \qquad \qquad
	-
	\cot (\theta_0) \mathfrak{Im} (\chi + s_1 (\tilde \chi + t\omega) + \sqrt{-1} \partial\bar\partial v_t + \sqrt{-1} \omega)^{n - i}
	\Bigg)
	\\
	&\qquad
	+ 
	\hat\chi^n 
	+ 
	\mathfrak{Re} (\chi + s_1 (\tilde \chi + t\omega) + \sqrt{-1} \partial\bar\partial v_t + \sqrt{-1} \omega)^n \\
	&\qquad \qquad \qquad
	-
	\cot (\theta_0) \mathfrak{Im} (\chi + s_1 (\tilde \chi + t\omega) + \sqrt{-1} \partial\bar\partial v_t + \sqrt{-1} \omega)^n \\
	&\geq \hat\chi^n - \sigma_1 \omega^n + b_t \omega^n 
	,
\end{aligned}
\end{equation}
wherever
\begin{equation}
\hat\chi := (1 - s_1) (\tilde \chi + t\omega) + \sqrt{- 1} \partial\bar\partial (\varphi_1 - v_t) .
\end{equation}

We solve the auxiliary complex Monge-Amp\`ere equation
\begin{equation}
	\left( (1 - s_1) (\tilde \chi + t\omega) + \sqrt{-1} \partial\bar\partial \psi_{s,k} \right)^n
	=
	\dfrac{\tau_k \left( v_t - \varphi_1 + (1 - s_1) u_\beta - s\right)}{A_{s,k,\beta}} (c_t f_1 + \sigma_1)\omega^n
	,
\end{equation}
where
\begin{equation}
A_{s,k,\beta} 
:= 
\dfrac{1}{ (1 - s_1)^n V_t}  \int_M \tau_k \left( v_t - \varphi_1 + (1 - s_1) u_\beta - s\right)  (c_t f_1 + \sigma_1) \omega^n
.
\end{equation}
We shall study the following function,
\begin{equation}
	- \left(\dfrac{n + 1}{n} A^{\frac{1}{n}}_{s,k,\beta} \left( - \psi_{s,k} + (1 - s_1) (u_\beta + 1) \right) + A^{\frac{n + 1}{n}}_{s,k,\beta}\right)^{\frac{n}{n + 1}} 
	+ v_t - \varphi_1 + (1 - s_1) u_\beta - s .
\end{equation}
We can prove as in the previous section that in $M$,
\begin{equation}
\label{inequality-4-16}
\begin{aligned}
	v_t - \varphi_1 + (1 - s_1) U_t - s 
	\leq \left(\frac{n + 1}{n} A^{\frac{1}{n}}_{s,k} \left( - \psi_{s,k} + (1 - s_1) (U_t + 1) \right) + A^{\frac{n + 1}{n}}_{s,k}\right)^{\frac{n}{n + 1}} 
	,
\end{aligned}
\end{equation}
where
\begin{equation}
A_{s,k} := \dfrac{1}{(1 - s_1)^n V_t} \int_M \tau_k \left( v_t - \varphi_1 + (1 - s_1) U_t - s\right)  (c_t f_1 + \sigma_1) \omega^n. 
\end{equation}
Integrating \eqref{inequality-4-16} over $M_s$, 
\begin{equation}
\begin{aligned}
&\quad 
\int_{M_s} \exp \left( \dfrac{\alpha_0}{1 - s_1} \dfrac{\left(v_t - \varphi_1 + (1 - s_1) U_t - s\right)^{\frac{n  +1}{n}}}{A^{\frac{1}{n}}_{s,k}} \right)\omega^n 
\\
&\leq 
\int_{M_s} \exp \left( \dfrac{\alpha_0}{ 1 - s_1} \left( \dfrac{n + 1}{n} (- \psi_{s,k} + 1 - s_1) + A_{s,k}\right)\right) \omega^n
\\
&\leq C \exp \left(\dfrac{\alpha_0}{1 - s_1}A_{s,k}\right)
,
\end{aligned}
\end{equation}
and hence
\begin{equation}
\int_{M_s} \exp \left( \dfrac{\alpha_0}{1 - s_1} \dfrac{\left(v_t - \varphi_1 + (1 - s_1) U_t - s\right)^{\frac{n  +1}{n}}}{A^{\frac{1}{n}}_s} \right)\omega^n 
\leq
C \exp \left(\dfrac{\alpha_0}{1 - s_1} A_s \right)
,
\end{equation}
where
$M_s := \{v_t - \varphi_1 + (1 - s_1) U_t \geq s\}$
and
\begin{equation}
A_s := 
\dfrac{1}{(1 - s_1)^n V_t} \int_{M_s}  \left( v_t - \varphi_1 + (1 - s_1) U_t - s\right)  (c_t f_1 + \sigma_1) \omega^n 
.
\end{equation}
It is easy to see that
\begin{equation}
\label{bound-4-21}
\begin{aligned}
A_s 
&\leq
\dfrac{s_1}{(1 - s_1)^n V_t} \int_{M}  - U_t   (c_t f_1 + \sigma_1) \omega^n 
 + \dfrac{ \Vert (- \varphi + U_t)^+ \Vert_{L^\infty} }{(1 - s_1)^n V_t}   \int_M (c_t f_1 + \sigma_1) \omega^n
 \\
&\leq 
\dfrac{s_1}{(1 - s_1)^n V_t} \left(\int_{M}  (c_t f_1 + \sigma_1) \ln^p (1 + c_t f_1 + \sigma_1)\omega^n + C \int_M \exp \left(2 (- U_t)^{\frac{1}{p}}\right) \omega^n \right) \\
&\qquad
 + \dfrac{ \Vert (- \varphi + U_t)^+ \Vert_{L^\infty} }{(1 - s_1)^n V_t}   \int_M (c_t f_1 + \sigma_1) \omega^n
\\
&\leq 
\dfrac{s_1}{(1 - s_1)^n V_t} \left(\int_{M}  (c_t f_1 + \sigma_1) \ln^p (1 + c_t f_1 + \sigma_1)\omega^n 
+ C_3 \right) \\
&\qquad
 + \dfrac{ \Vert (- \varphi + U_t)^+ \Vert_{L^\infty} }{(1 - s_1)^n V_t}   \int_M (c_t f_1 + \sigma_1) \omega^n
. 
\end{aligned}
\end{equation}
We define
\begin{equation}
\begin{aligned}
	E_t 
	&:= \dfrac{s_1}{(1 - s_1)^n V_t} \left(\int_{M}  (c_t f_1 + \sigma_1) \ln^p (1 + c_t f_1 + \sigma_1)\omega^n 
	+ C_3 \right)
	\\
	&\qquad
	 + \dfrac{ \Vert (- \varphi + U_t)^+ \Vert_{L^\infty} }{(1 - s_1)^n V_t}   \int_M (c_t f_1 + \sigma_1) \omega^n
	 .
\end{aligned}
\end{equation}
Then we can derive
\begin{equation}
\label{inequality-4-22}
\begin{aligned}
A_s
&\leq 
\dfrac{1}{(1 - s_1)^n V_t}  \left(\int_{M_s} \left(v_t - \varphi_1 + (1 - s_1) U_t - s\right)^{\frac{(n + 1) p}{n}} (c_t f_1 + \sigma_1) \omega^n\right)^{\frac{n}{(n + 1) p}} 
\\
&\qquad
\cdot
\left(\int_{M_s} (c_t f_1 + \sigma_1) \omega^n \right)^{\frac{(n + 1) p - n}{(n + 1) p}}
\\
&\leq
\dfrac{ (1 - s_1)^{\frac{n}{n + 1}} 2^{\frac{n}{n + 1}} A^{\frac{1}{n + 1}}_s}{ \alpha^{\frac{n}{n + 1}}_0 (1 - s_1)^n V_t} \\
&\qquad
\cdot 
\left( \int_{M_s} (c_t f_1 + \sigma_1) \ln^p (1 + c_t f_1 + \sigma_1) \omega^n + C \exp \left(\dfrac{\alpha_0}{1 - s_1} A_s\right)\right)^{\frac{n}{(n + 1) p}} 
\\
&\qquad
\cdot
\left(\int_{M_s} (c_t f_1 + \sigma_1) \omega^n \right)^{\frac{(n + 1) p - n}{(n + 1) p}}
\\
&\leq
\dfrac{C^{\frac{n}{n + 1}}_t}{(1 - s_1)^{\frac{n^2}{n + 1} } V_t}  A^{\frac{1}{n + 1}}_s
\left(\int_{M_s} (c_t f_1 + \sigma_1) \omega^n \right)^{\frac{(n + 1) p - n}{(n + 1) p}}
,
\end{aligned}
\end{equation}
where
\begin{equation}
C_t 
:=
\dfrac{2}{\alpha_0  }
\Bigg( \int_{M} (c_t f_1 + \sigma_1) \ln^p (1 + c_t f_1 + \sigma_1) \omega^n 
+ C \exp \left(  \dfrac{\alpha_0  }{1 - s_1} E_t\right)\Bigg)^{\frac{1}{ p}} 
.
\end{equation}
Rewriting \eqref{inequality-4-22},
\begin{equation}
	A_s
	\leq 
	\dfrac{ C_t}{(1 - s_1)^n V^{\frac{n + 1}{n}}_t} 
	\left(\int_{M_s} (c_t f_1 + \sigma_1) \omega^n \right)^{1 + \frac{1}{n} - \frac{1}{p}}
	.
\end{equation}
As in Section~\ref{L-estimate}, we obtain that
\begin{equation}
v_t - \varphi_1 + (1 - s_1) U_t 
\leq 
2^{\frac{np + p - n}{p - n}}\dfrac{C_t }{V^{\frac{1}{n}}_t}
\left( \int_{M} (c_t f_1 + \sigma_1) \omega^n  \right)^{\frac{1}{n} - \frac{1}{p}}
,
\end{equation}
when $p > n$.
It is obvious that $c_t$ is bounded if $f_1 \in L^q$ for  $q > 1$.

\begin{proposition}
There is a constant $K_2 > 0$ such that
\begin{equation}
v_t - \varphi_1 + (1 - s_1) U_t 
\leq 
K_2
,
\end{equation}
for $0 < t \leq 1$.

\end{proposition}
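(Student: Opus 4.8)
The plan is to verify that every factor on the right-hand side of the pointwise bound already obtained, namely
\begin{equation*}
v_t - \varphi_1 + (1 - s_1) U_t
\leq
2^{\frac{np + p - n}{p - n}}\dfrac{C_t }{V^{\frac{1}{n}}_t}
\left( \int_{M} (c_t f_1 + \sigma_1) \omega^n  \right)^{\frac{1}{n} - \frac{1}{p}},
\end{equation*}
is bounded uniformly in $t \in (0,1]$, once we fix some $p > n$. First I would bound $V_t$ from below: since $[\tilde\chi]$ is nef and big, $V_t = \int_M(\tilde\chi + t\omega)^n \geq \int_M \tilde\chi^n =: V_0 > 0$. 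Next, the constant $c_t$ is purely cohomological and increasing in $t$, so $0 \le c_t \le c_1$ on $(0,1]$; combined with $\int_M f_1\omega^n = \int_M \omega^n$ this gives $\int_M (c_t f_1 + \sigma_1)\omega^n \le (c_1 + \sigma_1)\int_M \omega^n$, a uniform bound, and shows that $c_t f_1 + \sigma_1$ stays in $L^q$ with uniformly controlled norm.

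It then remains to bound $C_t$. From its definition it suffices to bound $\int_M (c_t f_1 + \sigma_1)\ln^p(1 + c_t f_1 + \sigma_1)\omega^n$ and $E_t$. For the former I would use the elementary estimate $\ln^p(1 + \xi) \le C_{p,q}(1 + \xi)^{q - 1}$ for $\xi \ge 0$, so the integrand is dominated by $C(1 + c_t f_1 + \sigma_1)^q$, which is uniformly integrable by the previous paragraph; this is exactly where the hypothesis $f_1 \in L^q$ with $q > 1$ is used. For $E_t$, its definition in \eqref{bound-4-21} has two summands: the first is controlled by the $\ln^p$-integral just bounded together with the constant $C_3 = C\int_M \exp\bigl(2(-U_t)^{1/p}\bigr)\omega^n$, which is uniformly bounded because $U_t \ge U \ge \rho$ (see \eqref{definition-2-2}) and $\rho$ has only analytic, hence logarithmic, singularities, so $\exp\bigl(2(-\rho)^{1/p}\bigr)$ grows subpolynomially near the singular set and is integrable; the second summand is $\|(-\varphi_1 + U_t)^+\|_{L^\infty}(1-s_1)^{-n}V_t^{-1}\int_M(c_t f_1 + \sigma_1)\omega^n$, and here the $L^\infty$ estimate proved in Section~\ref{L-estimate} supplies $-\varphi_1 + U_t < K_0$ uniformly in $t$. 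Combining these yields $E_t \le C$ and hence $C_t \le C$, uniformly in $t \in (0,1]$.

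Substituting the bounds from the previous two paragraphs into the displayed inequality produces the desired constant $K_2$. The main obstacle is the uniform control of $E_t$: it rests simultaneously on the uniform $L^\infty$ bound $-\varphi_1 + U_t < K_0$ from Section~\ref{L-estimate} — itself the deepest input — and on the uniform integrability of $\exp\bigl(2(-U_t)^{1/p}\bigr)$ as $t \to 0+$, which is precisely why one needs the envelope $U$ to dominate a potential $\rho$ with merely analytic singularities. The remaining steps are routine bookkeeping of cohomological constants and $L^q$-type integrals.
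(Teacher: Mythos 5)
Your proposal is correct and follows the same route as the paper: the paper's argument in Section~5.2 derives exactly the displayed pointwise bound and then invokes uniformity of the constants, which you verify in detail (lower bound on $V_t$ from bigness, the cohomological monotone bound $c_t\le c_1$, the $L^1\log^p L$-control implied by $f_1\in L^q$, the uniform control of $C_3$ and $E_t$ via $U_t\ge U\ge\rho$ and the Section~\ref{L-estimate} $L^\infty$ estimate). The paper is terse on this final uniformity check, so your fleshed-out verification matches and clarifies the intended proof.
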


\medskip
\subsection{The stability estimate}

By concavity, we can do the following estimate,  
\begin{equation}
\begin{aligned}
	&
	\dfrac{\mathfrak{Re}	\left(\chi + (1 - r) (\tilde \chi + t\omega + \sqrt{-1} \partial\bar\partial \varphi_2) 
		+ r \left(s_1 (\tilde \chi + t \omega) + \sqrt{-1} \partial\bar\partial v_t\right) + \sqrt{-1} \omega \right)^n }{\mathfrak{Im}	\left(\chi + (1 - r) (\tilde \chi + t\omega + \sqrt{-1} \partial\bar\partial \varphi_2) 
				+ r \left(s_1 (\tilde \chi + t \omega) + \sqrt{-1} \partial\bar\partial v_t\right) + \sqrt{-1} \omega \right)^n} 
				\\
		& + \dfrac{\sigma_1 \omega^n}{\mathfrak{Im}	\left(\chi + (1 - r) (\tilde \chi + t\omega + \sqrt{-1} \partial\bar\partial \varphi_2) 
						+ r \left(s_1 (\tilde \chi + t \omega) + \sqrt{-1} \partial\bar\partial v_t\right) + \sqrt{-1} \omega \right)^n}
		\\
	&\geq (1 - r) \dfrac{\mathfrak{Re}	\left(\chi + \tilde \chi + t\omega + \sqrt{-1} \partial\bar\partial \varphi_2 + \sqrt{-1} \omega \right)^n + \sigma_1 \omega^n}{\mathfrak{Im}	 \left(\chi + \tilde \chi + t\omega + \sqrt{-1} \partial\bar\partial \varphi_2 + \sqrt{-1} \omega \right)^n} \\
	&\qquad 
	+ 
	r \dfrac{\mathfrak{Re}	\left( \chi + s_1 (\tilde \chi + t \omega) + \sqrt{-1} \partial\bar\partial v_t + \sqrt{-1} \omega\right)^n + \sigma_1 \omega^n}{\mathfrak{Im}	\left( \chi + s_1 (\tilde \chi + t  \omega ) + \sqrt{-1} \partial\bar\partial v_t + \sqrt{-1} \omega\right)^n} \\
	&> \cot (\theta_0)
	,
\end{aligned}
\end{equation}
that is, 
\begin{equation}
\label{inequality-4-5}
\begin{aligned}
&\quad
\mathfrak{Re}	\left(\chi + (1 - r) (\tilde \chi + t\omega + \sqrt{-1} \partial\bar\partial \varphi_2) 
		+ r \left(s_1 (\tilde \chi + t \omega) + \sqrt{-1} \partial\bar\partial v_t\right) + \sqrt{-1} \omega \right)^n
		\\
&>
\cot (\theta_0) \mathfrak{Im}	\Bigg(\chi + (1 - r) (\tilde \chi + t\omega + \sqrt{-1} \partial\bar\partial \varphi_2) \\
&\qquad\qquad\qquad\qquad
		+ r \left(s_1 (\tilde \chi + t \omega) + \sqrt{-1} \partial\bar\partial v_t\right) + \sqrt{-1} \omega \Bigg)^n
		- \sigma_1 \omega^n
		.
\end{aligned}
\end{equation}
Then we decompose
\begin{equation}
\label{equality-4-7}
\tilde \chi + t\omega + \sqrt{-1}\partial\bar\partial \varphi_1 
	=
	(1 - r) (\tilde \chi + t\omega + \sqrt{-1} \partial\bar\partial \varphi_2) 
	+ r \left(s_1 (\tilde \chi + t \omega) + \sqrt{-1} \partial\bar\partial v_t\right)
	+ r \hat\chi
	,
\end{equation}
where 
\begin{equation}
\hat\chi 
:= 
(1 - s_1) (\tilde \chi + t \omega ) - \sqrt{-1} \partial\bar\partial v_t + \dfrac{1}{r} \sqrt{-1}\partial\bar\partial \varphi_1 - \dfrac{1 - r}{r} \sqrt{-1} \partial\bar\partial \varphi_2 .
\end{equation}
Calculating the $n$-th exterior power,
\begin{equation}
\label{equality-4-8}
\begin{aligned}
	&\quad 
	\left(\chi + \tilde \chi + t\omega + \sqrt{-1}\partial\bar\partial \varphi_1 + \sqrt{-1} \omega\right)^n
	\\
	&= 
	\sum^{n - 1}_{i = 1}   C^i_n  (r \hat\chi)^i  \wedge \Bigg( \chi + (1 - r) (\tilde \chi + t\omega + \sqrt{-1} \partial\bar\partial \varphi_2) \\
	&\qquad \qquad \qquad \qquad \qquad 
			+ r \left(s_1 (\tilde \chi + t \omega ) + \sqrt{-1} \partial\bar\partial v_t \right) + \sqrt{-1} \omega \Bigg)^{n - i}
	\\
	&\qquad 
	+ r^n \hat\chi^n
	+ \Bigg( \chi + (1 - r) (\tilde \chi + t\omega + \sqrt{-1} \partial\bar\partial \varphi_2) \\
	&\qquad \qquad \qquad \qquad \qquad 
				+ r \left(s_1 ( \tilde \chi +  t \omega  ) + \sqrt{-1} \partial\bar\partial v_t\right) + \sqrt{-1} \omega \Bigg)^n 
	.
\end{aligned}
\end{equation}
At any point in $M$ with $\hat\chi \geq 0$, we obtain from \eqref{equality-4-8}\eqref{inequality-4-5} that 
\begin{equation}
\begin{aligned}
c_t f_1 \omega^n
&=
\mathfrak{Re} 	\left(\chi + \tilde \chi + t\omega + \sqrt{-1}\partial\bar\partial \varphi_1 + \sqrt{-1} \omega\right)^n \\
&\qquad - \cot (\theta_0) \mathfrak{Im} 	\left(\chi + \tilde \chi + t\omega + \sqrt{-1}\partial\bar\partial \varphi_1 + \sqrt{-1} \omega\right)^n \\
	&= 
	\sum^{n - 1}_{i = 1}   C^i_n  (r \hat\chi)^i  \wedge \Bigg(\mathfrak{Re} \bigg( \chi + (1 - r) (\tilde \chi + t\omega + \sqrt{-1} \partial\bar\partial \varphi_2) \\
	&\qquad \qquad \qquad \qquad \qquad  \qquad 
			+ r \left(s_1 (\tilde \chi + t \omega) + \sqrt{-1} \partial\bar\partial v_t \right) + \sqrt{-1} \omega \bigg)^{n - i}
	\\
	&\qquad \qquad 
	- \cot (\theta_0) \mathfrak{Im} \bigg( \chi + (1 - r) (\tilde \chi + t\omega + \sqrt{-1} \partial\bar\partial \varphi_2) \\
		&\qquad \qquad \qquad \qquad \qquad  \qquad
				+ r \left(s_1 (\tilde \chi + t \omega) + \sqrt{-1} \partial\bar\partial v_t \right) + \sqrt{-1} \omega \bigg)^{n - i}
	\Bigg)
	\\
	&\qquad 
	+ r^n \hat\chi^n
	+ \mathfrak{Re}\Bigg( \chi + (1 - r) (\tilde \chi + t\omega + \sqrt{-1} \partial\bar\partial \varphi_2) \\
	&\qquad \qquad \qquad \qquad \qquad \qquad
				+ r \left(s_1 (\tilde \chi + t \omega) + \sqrt{-1} \partial\bar\partial v_t\right) + \sqrt{-1} \omega \Bigg)^n 
	\\
	&\qquad \qquad 
		- \cot (\theta_0) \mathfrak{Im}\Bigg( \chi + (1 - r) (\tilde \chi + t\omega + \sqrt{-1} \partial\bar\partial \varphi_2) \\
		&\qquad \qquad \qquad \qquad \qquad \qquad
					+ r \left(s_1 (\tilde \chi + t \omega) + \sqrt{-1} \partial\bar\partial v_t\right) + \sqrt{-1} \omega \Bigg)^n 
					\\
	&\geq r^n \hat\chi^n - \sigma_1 \omega^n
					.
\end{aligned}
\end{equation}
It holds true that
\begin{equation}
\label{cone-4-16}
\hat\chi^n \leq \dfrac{c_t f_1 + \sigma_1}{r^n} \omega^n ,
\end{equation}
wherever $\hat \chi \geq 0$.

We solve the auxiliary complex Monge-Amp\`ere equation
\begin{equation}
\begin{aligned}
&\quad
	\left( 
	(1 - s_1) (\tilde \chi + t \omega) + \sqrt{-1} \partial\bar\partial \psi_{s,k}
	\right)^n
	\\
	&=
	\dfrac{\tau_k \left( \dfrac{1 - r}{r} \varphi_2 - \dfrac{1}{r} \varphi_1 + v_t + (1 - s_1) u_\beta - s\right)}{A_{s,k,\beta}} \dfrac{c_t f_1 + \sigma_1}{r^n} \omega^n ,
\end{aligned}
\end{equation}
where
\begin{equation}
	A_{s,k,\beta} := \dfrac{1}{ (1 - s_1)^n V_t} \int_M \tau_k \left(\dfrac{1 - r}{r} \varphi_2 - \dfrac{1}{r} \varphi_1 + v_t + (1 - s_1) u_\beta - s\right)  \dfrac{c_t f_1 + \sigma_1}{r^n} \omega^n.
\end{equation}
We study the following function,
\begin{equation}
\begin{aligned}
	\Phi 
	&:= 
	- \left(\dfrac{n + 1}{n} A^{\frac{1}{n}}_{s,k,\beta} \left(-\psi_{s,k} + (1 - s_1) (u_\beta + 1)\right) + A^{\frac{n + 1}{n}}_{s,k,\beta}\right)^{\frac{n}{n + 1}}
	\\
	&\qquad\qquad
	+ \dfrac{1 - r}{r} \varphi_2 - \dfrac{1}{r} \varphi_1 + v_t + (1 - s_1) u_\beta - s ,
\end{aligned}
\end{equation}
where $s \geq K_2$. 
Let 
$M_s := \left\{\dfrac{1 - r}{r} \varphi_2 - \dfrac{1}{r} \varphi_1 + v_t + (1 - s_1) U_t \geq s \right\} $.
We need to be careful about the dependence of the stability estimate, and include the details here.

If the maximal value of $\Phi$ occurs at $\bm{z}_0 \in M\setminus \mathring{M}_s$, 
then
\begin{equation}
\label{inequality-4-20}
\Phi (\bm{z}_0) \leq (1 - s_1) \Vert u_\beta - U_t\Vert_{L^\infty} .
\end{equation}
Otherwise, at $\bm{z}_0 \in \mathring{M}_s$,
\begin{equation}
\label{inequality-4-21}
\begin{aligned}
&\quad
	\sqrt{-1} \partial\bar\partial \left(\dfrac{1}{r} \varphi_1 - \dfrac{1 - r}{r} \varphi_2 - v_t - (1 - s_1) u_\beta \right)
	\\
	&\geq
	\left(\dfrac{n + 1}{n} A^{\frac{1}{n}}_{s,k,\beta} \left(-\psi_{s,k} + (1 - s_1) (u_\beta + 1)\right) + A^{\frac{n + 1}{n}}_{s,k,\beta}\right)^{- \frac{1}{n + 1}} 
	\\
	&\qquad\qquad \cdot  A^{\dfrac{1}{n}}_{s,k,\beta} \sqrt{-1} \partial\bar\partial \left(\psi_{s,k} - (1 - s_1) u_\beta \right) 
	.
\end{aligned}
\end{equation}
We may assume that
\begin{equation}
	- \psi_{s,k} + (1 - s_1) (u_\beta + 1) > - \psi_{s,k} + (1 - s_1) U_t \geq 0 ,
\end{equation}
since $\Vert U_t - u_\beta\Vert_{L^\infty} << 1$ when $\beta$ is sufficiently large.
Then we derive from \eqref{inequality-4-21} that
\begin{equation}
\label{inequality-4-23}
\begin{aligned}
&\quad
	(1 - s_1) (\tilde \chi + t \omega)
	+
	\sqrt{-1} \partial\bar\partial \left(\dfrac{1}{r} \varphi_1 - \dfrac{1 - r}{r} \varphi_2 - v_t   \right)
	\\
	&\geq 
	\left(\dfrac{n + 1}{n} A^{\frac{1}{n}}_{s,k,\beta} \left(-\psi_{s,k} + (1 - s_1) (u_\beta + 1)\right) + A^{\frac{n + 1}{n}}_{s,k,\beta}\right)^{- \frac{1}{n + 1}}  A^{\frac{1}{n}}_{s,k,\beta}
	\\
	&\qquad\qquad \cdot  \left( (1 - s_1) (\tilde \chi + t \omega) + \sqrt{-1} \partial\bar\partial  \psi_{s,k}  \right)
	\\
	&\qquad 
	+
	\left( 1 - 	\left(\dfrac{n + 1}{n} A^{\frac{1}{n}}_{s,k,\beta} \left(-\psi_{s,k} + (1 - s_1)( u_\beta + 1)\right) + A^{\frac{n + 1}{n}}_{s,k,\beta}\right)^{- \frac{1}{n + 1}}  A^{\frac{1}{n}}_{s,k,\beta} \right)
	\\
	&\qquad\qquad \cdot (1 - s_1) (\tilde \chi + t \omega + \sqrt{-1} \partial\bar\partial u_\beta) 
	\\
	&\geq 
	\left(\frac{n + 1}{n} A^{\frac{1}{n}}_{s,k,\beta} \left(-\psi_{s,k} + (1 - s_1) (u_\beta + 1)\right) + A^{\frac{n + 1}{n}}_{s,k,\beta}\right)^{- \frac{1}{n + 1}}  A^{\frac{1}{n}}_{s,k,\beta}
	\\
	&\qquad\qquad \cdot  \left( (1 - s_1) (\tilde \chi + t \omega) + \sqrt{-1} \partial\bar\partial  \psi_{s,k}  \right)
	.
\end{aligned}
\end{equation}
Calculating the $n$-th exterior power of \eqref{inequality-4-23},
\begin{equation}
\label{inequality-4-24}
\begin{aligned}
\hat\chi^n
	&\geq
	\left(\dfrac{n + 1}{n} A^{\frac{1}{n}}_{s,k,\beta} \left(-\psi_{s,k} + (1 - s_1) (u_\beta + 1)\right) + A^{\frac{n + 1}{n}}_{s,k,\beta}\right)^{- \frac{n}{n + 1}}  A_{s,k,\beta}
	\\
	&\qquad\qquad \cdot  \left( (1 - s_1) (\tilde \chi + t \omega) + \sqrt{-1} \partial\bar\partial  \psi_{s,k}  \right)^n
	\\
	&\geq 
	\left(\dfrac{n + 1}{n} A^{\frac{1}{n}}_{s,k,\beta} \left(-\psi_{s,k} + (1 - s_1) (u_\beta + 1)\right) + A^{\frac{n + 1}{n}}_{s,k,\beta}\right)^{- \frac{n}{n + 1}}  
	\\
	&\qquad\qquad \cdot    \left( \dfrac{1 - r}{r} \varphi_2 - \dfrac{1}{r} \varphi_1 + v_t + (1 - s_1) u_\beta - s\right)  \dfrac{c_t f_1 + \sigma_1}{r^n} \omega^n 
	.
\end{aligned}
\end{equation}
Substituting \eqref{cone-4-16} into \eqref{inequality-4-24},  
\begin{equation}
\label{inequality-4-25}
\begin{aligned}
& 
	\left(\dfrac{n + 1}{n} A^{\frac{1}{n}}_{s,k,\beta} \left(-\psi_{s,k} + (1 - s_1) (u_\beta + 1)\right) + A^{\frac{n + 1}{n}}_{s,k,\beta}\right)^{\frac{n}{n + 1}}  \\
&	\geq
   \left( \dfrac{1 - r}{r} \varphi_2 - \dfrac{1}{r} \varphi_1 + v_t + (1 - s_1) u_\beta - s\right)   
	.
\end{aligned}
\end{equation}
We can conclude from \eqref{inequality-4-20} and \eqref{inequality-4-25} that
\begin{equation}
	\Phi \leq (1 - s_1) \Vert u_\beta - U_t\Vert_{L^\infty} \qquad \text{in } M .
\end{equation}
Letting $\beta$ approach $+\infty$, we obtain that
\begin{equation}
\begin{aligned}
&
	\dfrac{1 - r}{r} \varphi_2 - \dfrac{1}{r} \varphi_1 + v_t + (1 - s_1)U_t - s 
	\\
	 &\leq  
	 \left(\dfrac{n + 1}{n} A^{\frac{1}{n}}_{s,k} \left(-\psi_{s,k} + (1 - s_1) (U_t + 1)\right) + A^{\frac{n + 1}{n}}_{s,k }\right)^{\frac{n}{n + 1}}
	.
\end{aligned}
\end{equation}
where 
\begin{equation}
A_{s,k} := \dfrac{1}{ (1 - s_1)^n V_t} \int_M \tau_k \left(\dfrac{1 - r}{r} \varphi_2 - \dfrac{1}{r} \varphi_1 + v_t + (1 - s_1) U_t - s\right)  \dfrac{c_t f_1 + \sigma_1}{r^n} \omega^n
.
\end{equation}
Then
\begin{equation}
\label{inequality-4-29}
\begin{aligned}
&\quad
\int_{M_s} \exp \left( \dfrac{\alpha_0}{ (1 - s_1) A^{\frac{1}{n}}_{s,k}} \left(\dfrac{1 - r}{r} \varphi_2 - \dfrac{1}{r} \varphi_1 + v_t + (1 - s_1) U_t - s\right)^{\frac{n + 1}{n}}\right) \omega^n
\\
&\leq 
\int_{M_s} \exp \left( \dfrac{ \alpha_0}{1 - s_1} \left(\dfrac{n + 1}{n}   \left(-\psi_{s,k} + (1 - s_1) (U_t + 1)\right) + A_{s,k }\right)\right) \omega^n
\\
&\leq 
\int_{M_s} \exp \left( \dfrac{ \alpha_0}{1 - s_1} \left(\dfrac{n + 1}{n}   \left(-\psi_{s,k} + (1 - s_1) \right) + A_{s,k }\right)\right) \omega^n
\\
&\leq C \exp \left(\dfrac{\alpha_0}{1 - s_1} A_{s,k}\right)
.
\end{aligned}
\end{equation}
Letting $k$ approach infinity in the above inequality~\eqref{inequality-4-29}, we have
\begin{equation}
\label{inequality-4-30}
\begin{aligned}
\int_{M_s} \exp \left( \dfrac{\alpha_0}{(1 - s_1)A^{\frac{1}{n}}_s } \left(\dfrac{1 - r}{r} \varphi_2 - \dfrac{1}{r} \varphi_1 + v_t + (1 - s_1) U_t - s\right)^{\frac{n + 1}{n}}\right) \omega^n
\\
\leq 
C \exp \left(\dfrac{\alpha_0}{1 - s_1} A_s\right)
,
\end{aligned}
\end{equation}
where
\begin{equation}
A_s
:=
\dfrac{1}{(1 - s_1)^n V_t} \int_{M_s} \left(\dfrac{1 - r}{r} \varphi_2 - \dfrac{1}{r} \varphi_1 + v_t + (1 - s_1) U_t - s\right)  \dfrac{c_t f_1 + \sigma_1}{r^n} \omega^n
.
\end{equation}
On $M_s$ with $s \geq K_2$, 
\begin{equation}
\dfrac{1 - r}{r} (\varphi_2 - \varphi_1) 
\geq  
\varphi_1 - v_t - (1 - s_1) U_t + s
\geq 
0
,
\end{equation}
and consequently
\begin{equation}
\label{inequality-4-33}
\begin{aligned}
A_s
&\leq 
\dfrac{1}{(1 - s_1)^n V_t} \int_{M_s}  \dfrac{1 - r}{r} (\varphi_2 -   \varphi_1)   \dfrac{c_t f_1 + \sigma_1}{r^n} \omega^n
\\   
&\leq 
\dfrac{1}{(1 - s_1)^n V_t r^{n + 1}}  \Vert (\varphi_2 - \varphi_1)^+ \Vert_{L^{q^*}} \Vert c_t f_1 + \sigma_1 \Vert_{L^q}   
. 
\end{aligned}
\end{equation}
The last line of \eqref{inequality-4-33} is due to H\"older inequality with respect to measure $\omega^n$.

Supposing that $\Vert (\varphi_2 - \varphi_1)^+ \Vert_{L^{q^*}} < \dfrac{1}{2^{n + 2}} $ with $q^* := \dfrac{q}{q - 1}$, we may choose
\begin{equation}
r := \Vert (\varphi_2 - \varphi_1)^+ \Vert^{\frac{1}{n + 2}}_{L^{q^*}}  < \dfrac{1}{2} .
\end{equation}
Then we can get an upper bound of $A_s$, 
\begin{equation}
\label{inequality-4-35}
	A_s \leq E_t := \dfrac{1}{2 (1 - s_1)^n V_t} \Vert c_t f_1 + \sigma_1\Vert_{L^q}, 
\end{equation}
where $E_t$ is actually $t$-independently bounded if $t$ is bounded. 
For any 
$q > 1$, 
\begin{equation}
\label{inequality-4-36}
\begin{aligned}
	A_s
	&\leq 
	\dfrac{1}{(1 - s_1)^n V_t r^n} 
	\Vert c_t f_1 + \sigma_1 \Vert^{1 - \frac{1}{(n + 1)^2}}_{L^1 (M_s)}
	\Vert c_t f_1 + \sigma_1 \Vert^{\frac{1}{(n + 1)^2}}_{L^q (M_s)}
	\\
	&\qquad
	\cdot
	\left\Vert \left(\frac{1 - r}{r} \varphi_2 - \dfrac{1}{r} \varphi_1 + v_t + (1 - s_1) U_t - s \right)^{\frac{n + 1}{n}}\right\Vert^{\frac{n}{n + 1}}_{L^{\frac{q n (n + 1)}{q - 1}} (M_s)}
	\\
	&=
	\dfrac{A^{\frac{1}{n + 1}}_s}{\alpha^{\frac{n}{n + 1}} (1 - s_1)^n V_t r^n} 
	\Vert c_t f_1 + \sigma_1 \Vert^{1 - \frac{1}{(n + 1)^2}}_{L^1 (M_s)}
	\Vert c_t f_1 + \sigma_1 \Vert^{\frac{1}{(n + 1)^2}}_{L^q (M_s)}
	\\
	&\qquad 
	\cdot
	\left\Vert \dfrac{\alpha}{A^{\frac{1}{n}}_s} \left(\dfrac{1 - r}{r} \varphi_2 - \dfrac{1}{r} \varphi_1 + v_t + (1 - s_1) U_t - s \right)^{\frac{n + 1}{n}}\right\Vert^{\frac{n}{n + 1}}_{L^{\frac{q n (n + 1)}{q - 1}} (M_s)}
	,
\end{aligned}
\end{equation}
by H\"older inequality. 
Applying Taylor expansion and \eqref{inequality-4-30}\eqref{inequality-4-35} to \eqref{inequality-4-36}, 
\begin{equation}
\begin{aligned}
	A_s
	\leq 
	\dfrac{C 	\exp \left(\frac{\alpha_0 (q - 1)}{(1 - s_1) q n (n + 1)} E_t\right)}{\alpha_0 (1 - s_1)^n V^{\frac{n + 1}{n}}_t r^{n + 1}}
	\Vert c_t f_1 + \sigma_1 \Vert^{1 + \frac{1}{n + 1}}_{L^1 (M_s)}
	\Vert c_t f_1 + \sigma_1 \Vert^{\frac{1}{n (n + 1)}}_{L^q (M_s)}
	.
\end{aligned}
\end{equation}
For any $s' > 0$ and $s \geq K_2$,
\begin{equation}
\label{inequality-4-38}
\begin{aligned}
&\quad
s' \int_{M_{s + s'}} (c_t f_1 + \sigma_1) \omega^n
\\
&
\leq 
\int_{M_s} \left(\dfrac{1 - r}{r} \varphi_2 - \dfrac{1}{r} \varphi_1 + v_t + (1 - s_1) U_t - s\right) (c_t f_1 + \sigma_1) \omega^n 
\\
&
\leq 
\dfrac{C 	\exp \left(\dfrac{\alpha_0 (q - 1)}{(1 - s_1) q n (n + 1)} E_t\right)}{\alpha_0  V^{\frac{1}{n}}_t r }
\Vert c_t f_1 + \sigma_1 \Vert^{\frac{1}{n (n + 1)}}_{L^q}
\left(\int_{M_s} (c_t f_1 + \sigma_1) \omega^n\right)^{1 + \frac{1}{n + 1}}
.
\end{aligned}
\end{equation}
As a direct application, we have
\begin{equation}
\begin{aligned}
	\int_{M_{K_1 + 1}} (c_t f_1 + \sigma_1) \omega^n
	&\leq
	\dfrac{1 - r}{r} 
	\int_{M_{K_1}} (\varphi_2 - \varphi_1) (c_t f_1 + \sigma_1) \omega^n 
	\\
	&\leq 
	\dfrac{1}{r} 
	\int_{M_{K_1}} (\varphi_2 - \varphi_1) (c_t f_1 + \sigma_1) \omega^n 
	\\
	&\leq 
	r^{n + 1} \Vert c_t f_1 + \sigma_1\Vert_{L^q}
	.
\end{aligned}
\end{equation}
By De Giorgi iteration,
\begin{equation}
\begin{aligned}
&\quad 
\dfrac{1 - r}{r} \varphi_2 - \dfrac{1}{r} \varphi_1 + v_t + (1 - s_1) U_t
\\
&\leq 
K_2 + 1 
+
\dfrac{C 	\exp \left(\frac{\alpha_0 (q - 1)}{(1 - s_1) q n (n + 1)} E_t\right)}{\alpha_0 V^{\frac{1}{n}}_t }
\Vert c_t f_1 + \sigma_1 \Vert^{\frac{1}{n }}_{L^q}   
,
\end{aligned}
\end{equation}
and consequently
\begin{equation}
\label{inequality-4-3-58}
\begin{aligned}
&\quad
	\varphi_2 - \varphi_1
	\\
	&\leq 
	2 r 
	\left(
	\varphi_1 - v_t - (1 - s_1) U_t
	+
	K_2 + 1 
	+
	\dfrac{C 	\exp \left(\frac{\alpha_0 (q - 1)}{(1 - s_1)q n (n + 1)} E_t\right)}{\alpha_0 V^{\frac{1}{n}}_t }
	\Vert c_t f_1 + \sigma_1 \Vert^{\frac{1}{n }}_{L^q} \right)
	\\
	&\leq 
	2 r 
	\left(
	 - v_t + s_1 U_t -  U_t
	+
	K_2 + 1 
	+
	\dfrac{C 	\exp \left(\frac{\alpha_0 (q - 1)}{(1 - s_1) q n (n + 1)} E_t\right)}{\alpha_0 V^{\dfrac{1}{n}}_t }
	\Vert c_t f_1 + \sigma_1 \Vert^{\frac{1}{n }}_{L^q} \right)
	\\
	&\leq
	2 r 
	\left(
	 -  U_t
	+
	K_1 + K_2 + 1 
	+
	\dfrac{C 	\exp \left(\frac{\alpha_0 (q - 1)}{(1 - s_1) q n (n + 1)} E_t\right)}{\alpha_0  V^{\frac{1}{n}}_t }
	\Vert c_t f_1 + \sigma_1 \Vert^{\frac{1}{n }}_{L^q} \right)
	.
\end{aligned}
\end{equation}

If  $\Vert  (\varphi_2 - \varphi_1)^+\Vert_{L^{q^*}} \geq \frac{1}{2^{n + 2}}$, then
\begin{equation}
\label{inequality-4-3-59}
\begin{aligned}
	\varphi_2 - \varphi_1 
	&\leq 
	2 \left( - U_t +  \Vert  (- \varphi_1 + U_t)^+ \Vert_{L^\infty} \right) r
	.
\end{aligned}
\end{equation}

Combing \eqref{inequality-4-3-58} and \eqref{inequality-4-3-59}, we can conclude that
\begin{equation}
\label{inequality-4-3-61}
\varphi_2 - \varphi_1 
\leq 
2 ( - U_t + C) \Vert (\varphi_2 - \varphi_1)^+ \Vert^{\frac{1}{n + 2}}_{L^{q^*}}
\leq 
2 ( - U   + C) \Vert (\varphi_2 - \varphi_1)^+ \Vert^{\frac{1}{n + 2}}_{L^{q^*}}
.
\end{equation}

\begin{theorem}
Let $\varphi_1$ and $\varphi_2$ be solutions to Equation~\eqref{equality-4-1} and \eqref{equality-4-2} respectively. For $q > 1$, we have
\begin{equation}
	\sup_M (\varphi_2 - \varphi_1) 
	\leq 
	2 \left( - U_t + C (\chi,\tilde \chi, \omega, n , q, \Vert f_1\Vert_{L^q}) \right) \Vert (\varphi_2 - \varphi_1)^+ \Vert^{\frac{1}{n + 2}}_{L^{q^*}} ,
\end{equation}
where $q^* = \frac{q}{q - 1}$. 
\end{theorem}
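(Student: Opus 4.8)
The statement is the clean packaging of the chain of estimates built up in Section~\ref{stability} (this is the $n\ge 4$ setting, which is what lets Lemma~\ref{lemma-2-3}(4) give full concavity of $\mathfrak g$ with the choice $b=-\sigma_1\ge-\epsilon_1$), so the plan is to recall that chain and verify that every constant that appears is independent of $t\in(0,1]$. If $\Vert(\varphi_2-\varphi_1)^+\Vert_{L^{q^*}}=0$ then $\varphi_2\le\varphi_1$ almost everywhere, hence everywhere by smoothness, so $\sup_M(\varphi_2-\varphi_1)\le 0$ and the asserted inequality holds vacuously; thus we may assume $\Vert(\varphi_2-\varphi_1)^+\Vert_{L^{q^*}}>0$ and set $r:=\Vert(\varphi_2-\varphi_1)^+\Vert_{L^{q^*}}^{1/(n+2)}$.

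In the regime $\Vert(\varphi_2-\varphi_1)^+\Vert_{L^{q^*}}<2^{-(n+2)}$, so that $r<\frac12$, I would use the intermediate potential $v_t$ constructed in Section~\ref{stability}, together with the uniform bounds $-v_t+s_1U_t\le K_1$ and $v_t-\varphi_1+(1-s_1)U_t\le K_2$ established there. Concavity of $\mathfrak g$ (Lemma~\ref{lemma-2-3}(4)) along the segment from $\tilde\chi+t\omega+\sqrt{-1}\partial\bar\partial\varphi_2$ to $s_1(\tilde\chi+t\omega)+\sqrt{-1}\partial\bar\partial v_t$ gives \eqref{inequality-4-5}; the decomposition \eqref{equality-4-7}, the binomial expansion \eqref{equality-4-8} and discarding the nonnegative mixed terms give the cone bound \eqref{cone-4-16}, i.e.\ $\hat\chi^n\le r^{-n}(c_tf_1+\sigma_1)\omega^n$ wherever $\hat\chi\ge0$, with $\hat\chi=(1-s_1)(\tilde\chi+t\omega)+\sqrt{-1}\partial\bar\partial(\frac1r\varphi_1-\frac{1-r}{r}\varphi_2-v_t)$. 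Feeding this into the auxiliary complex Monge--Amp\`ere equation and running the Guo--Phong--Tong maximum-principle argument~\cite{GPT2021}, with $U_t$ replaced by the smooth approximants $u_\beta$ of Lemma~\ref{lemma-Berman-approximation} and then letting $\beta,k\to\infty$, produces the pointwise comparison \eqref{inequality-4-25}; the H\"ormander--Tian exponential integrability bound then yields \eqref{inequality-4-30}, and two applications of H\"older's inequality with respect to $\omega^n$ — the first, via the choice of $r$, producing the $t$-uniform bound $A_s\le E_t=\frac{1}{2(1-s_1)^nV_t}\Vert c_tf_1+\sigma_1\Vert_{L^q}$, the second the recursion \eqref{inequality-4-38} — put us in position to invoke the De Giorgi iteration Lemma~\ref{lemma-2-2}, which outputs \eqref{inequality-4-3-58}.

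In the complementary regime $\Vert(\varphi_2-\varphi_1)^+\Vert_{L^{q^*}}\ge 2^{-(n+2)}$, so that $r\ge\frac12$, the crude estimate $\varphi_2-\varphi_1\le-\varphi_1\le -U_t+\Vert(-\varphi_1+U_t)^+\Vert_{L^\infty}\le 2r\bigl(-U_t+\Vert(-\varphi_1+U_t)^+\Vert_{L^\infty}\bigr)$ is precisely \eqref{inequality-4-3-59}. Combining the two regimes yields the pointwise inequality \eqref{inequality-4-3-61}; collecting $K_1$, $K_2$, the uniform $L^\infty$ bound $\Vert(-\varphi_1+U_t)^+\Vert_{L^\infty}\le K_0$ from Section~\ref{L-estimate}, and the uniform bound on $E_t$ — finite because $c_t$ is bounded on $(0,1]$ once $f_1\in L^q$ with $q>1$, and $V_t\ge V_0>0$ by bigness of $[\tilde\chi]$ — into a single constant $C=C(\chi,\tilde\chi,\omega,n,q,\Vert f_1\Vert_{L^q})$, and taking $\sup_M$ of \eqref{inequality-4-3-61}, finishes the proof; since $U\le U_t$ one may further write $-U$ in place of $-U_t$.

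The main obstacle is the pointwise comparison \eqref{inequality-4-25}: one must convert the deformed Hermitian--Yang--Mills operator into something dominated by a Monge--Amp\`ere operator using concavity of $\mathfrak g$, and then run the Guo--Phong--Tong maximum principle over the merely nef, possibly degenerate background class $(1-s_1)(\tilde\chi+t\omega)$ — which is exactly why Berman's smoothing of $U_t$ (Lemma~\ref{lemma-Berman-approximation}) enters. For the present theorem the only remaining subtlety is bookkeeping: checking that $\alpha_0$, the $E_t$-bound and every iteration constant entering \eqref{inequality-4-3-58}--\eqref{inequality-4-3-59} are independent of $t\in(0,1]$, so that $C$ really depends only on the listed data.
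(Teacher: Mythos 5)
Your proposal is correct and follows essentially the same route as the paper: the intermediate potential $v_t$ from Equation~\eqref{equation-4-3} with its uniform bounds $K_1$, $K_2$; the concavity of $\mathfrak g$ (Lemma~\ref{lemma-2-3}(4) with $b=-\sigma_1\ge-\epsilon_1$) leading to the cone bound \eqref{cone-4-16}; the Guo--Phong--Tong/Berman maximum-principle argument producing \eqref{inequality-4-25}--\eqref{inequality-4-30}; the two H\"older steps and De Giorgi iteration giving \eqref{inequality-4-3-58}; and the case split on $\Vert(\varphi_2-\varphi_1)^+\Vert_{L^{q^*}}$ against $2^{-(n+2)}$ yielding \eqref{inequality-4-3-61}. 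The $t$-uniformity bookkeeping you flag at the end is exactly what the paper's choice of $E_t$ and $V_0\le V_t$ is designed to handle.
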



\medskip
\section{Limiting function}
\label{limiting-function}

In this section, we shall study the limit function of solution $\varphi_t$ to Equation~\eqref{equation-1-5}. 
In case that $U$ is bounded, we actually construct a weak solution in pluripotential sense.

Since $U$ is $\tilde \chi$-PSH, we know that $U \in L^p $ for any $p \geq 1$. 
By the $L^\infty$ estimate in Section~\ref{L-estimate} and the fact that $U \leq U_t \leq 0$, we can see that $\{\varphi_t \}_{0 < t \leq 1}$ is uniformly bounded in $L^p$ for any $p \geq 1$.

The following gradient estimate is essentially in \cite{GuedjZeriahi2017}, and we include the details for completeness of the argument. 
We know from \eqref{inequality-3-1-19} that 
\begin{equation}
(L \omega + \sqrt{-1} \partial\bar\partial\varphi_t ) \wedge \omega^{n - 1} > 0 ,
\end{equation}
for some sufficiently large constant $L > 0$.
We define
\begin{equation}
	\varphi_{t, \sigma} := - ( - \varphi_t + 1)^\sigma \leq - 1 ,
\end{equation}
for $\sigma \in (0,1)$. 
By direct calculation, 
\begin{equation}
\begin{aligned}
&\quad 
	\left(L \omega + \sqrt{-1} \partial\bar\partial \varphi_{t, \sigma}\right) \wedge \omega^{n - 1}
	\\
	&\geq 
	L \left(1 - \sigma ( - \varphi_t + 1)^{\sigma - 1} \right) \omega^n 
	+
	\sigma (- \varphi_t + 1)^{\sigma - 1}\left(L \omega + \sqrt{- 1} \partial\bar\partial 
		\varphi_t\right)  \wedge \omega^{n - 1}
	\\
	&\qquad
	+ \sigma (1 - \sigma ) (- \varphi_t + 1)^{\sigma - 2} \sqrt{-1} \partial \varphi_t \wedge \bar\partial \varphi_t
		\wedge \omega^{n - 1}
	\\
	&>
	\sigma (1 - \sigma ) (- \varphi_t + 1)^{\sigma - 2} \sqrt{-1} \partial \varphi_t \wedge \bar\partial \varphi_t
	\wedge \omega^{n - 1}
	,
\end{aligned}
\end{equation}
and hence
\begin{equation}
\begin{aligned}
	\int_M \dfrac{ |\nabla \varphi_t|^2 }{(- \varphi_t + 1)^{  2 - \sigma}} \omega^n
	&\leq 
	\dfrac{n}{\sigma (1 - \sigma ) } \int_M 	\left(L \omega + \sqrt{-1} \partial\bar\partial \varphi_{t, \sigma}\right) \wedge \omega^{n - 1}
	\\
	&= 
	\dfrac{n L}{\sigma (1 - \sigma ) } \int_M \omega^n
	.
\end{aligned}
\end{equation}
By H\"older inequality, we obtain
\begin{equation}
\label{inequality-5-5}
\begin{aligned}
	\Vert \nabla \varphi_t \Vert_{L^r}
	&= 
	\left(\int_M \left(\dfrac{|\nabla \varphi_t|^2}{(- \varphi_t + 1)^{2 - \sigma}}\right)^{\frac{r}{2}}     ( - \varphi_t + 1)^{\frac{(2 - \sigma) r}{2}}\omega^n\right)^{\frac{1}{r}}
	\\
	&\leq 
	\left(\int_M \frac{|\nabla \varphi_t|^2}{(- \varphi_t + 1)^{2 - \sigma}}\right)^{\frac{1}{2}} 
	\left( \int_M (- \varphi_t + 1)^{\frac{(2 - \sigma) r}{2 - r}} \omega^n\right)^{  \frac{2 - r}{2 r}}
	,
\end{aligned}
\end{equation}
for $1 \leq r < 2$.

By the gradient estimate~\eqref{inequality-5-5}, $\{ \varphi_t\}_{0 < t \leq 1}$ is uniformly bounded in $W^{1,r}$ for any $1 \leq  r < 2$.
By Sobolev embedding,  $\{ \varphi_t\}_{0 < t \leq 1}$ is precompact in $L^1$ norm, and thus  there exists  a sequence $t_i$ decreasing to $0$ such that $\varphi_{t_i}$ is convergent in $L^1$ norm.
%
%
%
Therefore for any fixed $1 \leq q^* < +\infty$, 
\begin{equation}
\begin{aligned}
	\int_M |\varphi_{t_i} - \varphi_{t_j}|^{q^*} \omega^n
	&=
	\int_M |\varphi_{t_i} - \varphi_{t_j}|^{q^* - \frac{1}{2}} |\varphi_{t_i} - \varphi_{t_j}|^{\frac{1}{2}} \omega^n
	\\
	&\leq 
	\left( \int_M |\varphi_{t_i} - \varphi_{t_j}|^{2q^* - 1} \omega^n \right)^{\frac{1}{2}} 
	\left(\int_M |\varphi_{t_i} - \varphi_{t_j}| \omega^n\right)^{\frac{1}{2}}
	\\
	&\leq 
	\left(\Vert \varphi_{t_i}\Vert_{L^{2q^* - 1}} + \Vert \varphi_{t_j}\Vert_{L^{2q^* - 1}}\right)^{\frac{2 q^* - 1}{2}} 
	\left(\int_M |\varphi_{t_i} - \varphi_{t_j}| \omega^n\right)^{\frac{1}{2}}
	\\
	&\leq 
	C 	\left(\int_M |\varphi_{t_i} - \varphi_{t_j}| \omega^n\right)^{\frac{1}{2}}
	\\
	&\to 0 \qquad \qquad (i, j \to \infty) ,
\end{aligned}
\end{equation}
which means $\{\varphi_{t_i}\}$ is Cauchy in $L^{q^*}$. 
By passing to a subsequence again, we may assume that
\begin{equation}
	\Vert  \varphi_{t_j} - \varphi_{t_i}\Vert_{L^{q^*}} < \dfrac{1}{2^{(n + 2) (i + 2)}} , \qquad \forall j \geq i .
\end{equation}
By the stability estimate~\eqref{inequality-4-3-61}, 
\begin{equation}
	\varphi_{t_{i + 1}} - \varphi_{t_i} 
	\leq 
	2 (- U + C) \Vert \varphi_{t_{i + 1}} - \varphi_{t_i}\Vert^{\frac{1}{n + 2}}_{L^{q^*}} 
	\leq 
	2 (- U + C) \Vert \varphi_{t_{i + 1}} - \varphi_{t_i}\Vert^{\frac{1}{n + 2}}_{L^{q^*}} 
	\leq
	\dfrac{ - U + C}{2^{i + 1}}
	.
\end{equation}
For fixed $j$ and $\forall i > j$,
\begin{equation}
\left(1 + \dfrac{1}{2^j}\right) (U - C)
\leq 
\varphi_{t_{i + 1}} + \left(\dfrac{1}{2^j} - \dfrac{1}{2^{i + 1}}\right) (U - C)
\leq
\varphi_{t_i} + \left(\dfrac{1}{2^j} - \dfrac{1}{2^i}\right) (U - C)
\leq 
0
.
\end{equation}
We can conclude that $\varphi_{t_i} + \left(\frac{1}{2^j} - \frac{1}{2^i}\right) (U - C)$ is convergent to a
function $\varphi + \frac{1}{2^j} (U - C)$,
which is $\left(\chi + \tilde \chi - \cot (\theta_0) \omega + \frac{1}{2^j} \tilde \chi\right)$-PSH.
Then $\varphi^*$ is a $\left(\chi + \tilde \chi - \cot (\theta_0) \omega \right)$-PSH function.
In fact, $\varphi^* + U$ is $\left(\chi + 2\tilde \chi - \cot (\theta_0) \omega \right)$-PSH, 
and $\varphi^* + U = \varphi + U$ almost everywhere.
Therefore, $\varphi^* + U = \varphi + U$, that is $\varphi^* = \varphi$.

Moreovr, the limit function is a weak solution in pluripotential sense 
if $U$ is bounded. 

\medskip
\section{Stability estimate for dimensional $3$}

When $n = 3$, Equation~\eqref{equation-4-3} is not known to be solvable under the requirement~\eqref{4-5}. So we have to adapt  slightly different arguments.

\medskip

\subsection{Case 1: $\theta_0 \in (0,{\pi}/{2}]$, i.e. hypercritical phase case}

In this case, $\cot (\theta_0) \geq 0$. We plan to rewrite the equation as a Complex Monge-Amp\`ere type equation, and then adapt the argument in~\cite{Sun202210}. 

We rewrite Equation~\eqref{equation-1-5} as
\begin{equation}
\label{equation-4-15}
\begin{aligned}
&
	(\chi + \tilde \chi + t\omega + \sqrt{-1} \partial\bar\partial \varphi_t)^3 - 3 (\chi + \tilde \chi + t\omega + \sqrt{-1} \partial\bar\partial \varphi_t) \wedge \omega^2 \\
&=
\cot(\theta_0)
\left(
	3 (\chi + \tilde \chi + t\omega + \sqrt{-1} \partial\bar\partial \varphi_t)^2 \wedge \omega
	-
	\omega^3
\right)
	+ 
	c_t f \omega^3
	.
\end{aligned}
\end{equation}
Given that $\bm{\lambda} (\chi + \tilde \chi + t\omega + \sqrt{-1} \partial\bar\partial \varphi_t) \in \Gamma_{\theta_0,\Theta_0}$, we know that 
\begin{equation}
\chi + \tilde \chi + t\omega + \sqrt{-1} \partial\bar\partial \varphi_t > \cot (\theta_0) \omega,
\end{equation}
and hence rewrite Equation~\eqref{equation-4-15} as
\begin{equation}
\label{equation-5-3}
\begin{aligned}
&
	(\chi + \tilde \chi + t\omega - \cot (\theta_0) \omega+ \sqrt{-1} \partial\bar\partial \varphi_t )^3 \\
&=
	3 \left(\cot^2 (\theta_0) + 1\right)(\chi + \tilde \chi + t\omega - \cot (\theta_0) \omega + \sqrt{-1} \partial\bar\partial \varphi_t  ) \wedge \omega^2 
	\\
&\qquad
	+
	2 \cot(\theta_0) \left(\cot^2(\theta_0) + 1\right) \omega^3
	+ 
	c_t f \omega^3
	,
\end{aligned}
\end{equation}
by a simple polynomial expansion. 
Then \eqref{equality-4-1} and \eqref{equality-4-2} can be rewritten as
\begin{equation}
\label{equation-7-4}
\begin{aligned}
&
	(\chi + \tilde \chi + t\omega - \cot (\theta_0) \omega+ \sqrt{-1} \partial\bar\partial \varphi_1 )^3 \\
&=
	3 \left(\cot^2 (\theta_0) + 1\right)(\chi + \tilde \chi + t\omega - \cot (\theta_0) \omega + \sqrt{-1} \partial\bar\partial \varphi_1  ) \wedge \omega^2 
	\\
&\qquad
	+
	2 \cot(\theta_0) \left(\cot^2(\theta_0) + 1\right) \omega^3
	+ 
	c_t f_1 \omega^3
	,
	\qquad
	\sup_M \varphi_1 = 0 ,
\end{aligned}
\end{equation}
and
\begin{equation}
\label{equation-7-5}
\begin{aligned}
&
	(\chi + \tilde \chi + t\omega - \cot (\theta_0) \omega+ \sqrt{-1} \partial\bar\partial \varphi_2 )^3 \\
&=
	3 \left(\cot^2 (\theta_0) + 1\right)(\chi + \tilde \chi + t\omega - \cot (\theta_0) \omega + \sqrt{-1} \partial\bar\partial \varphi_2  ) \wedge \omega^2 
	\\
&\qquad
	+
	2 \cot(\theta_0) \left(\cot^2(\theta_0) + 1\right) \omega^3
	+ 
	c_t f_2 \omega^3
	,
	\qquad
	\sup_M \varphi_2 = 0 .
\end{aligned}
\end{equation}
Moreover, the boundary case of $\mathcal{C}$-subsolution condition can be read as
\begin{equation}
	\left(\chi - \cot (\theta_0) \omega\right)^2 \geq \left(1 + \cot^2 (\theta_0)\right) \omega^2 ,
\end{equation}
and
\begin{equation}
	\chi -  \cot (\theta_0) \omega > 0 .
\end{equation}
Since $\cot (\theta) \geq 0$ when $\theta \in \left(0,\dfrac{\pi}{2}\right]$, we derive as in \cite{Sun202210} that
\begin{equation}
\begin{aligned}
&\quad
	\dfrac{\left( 2 \cot (\theta_0) \left(\cot^2 (\theta_0) + 1\right) + c_t f_1 \right) \omega^3}{r^3 (\chi - \cot(\theta_0) \omega + \hat\chi)^3} - 1
	\\
	&\geq
	\dfrac{ \left( 2 \cot (\theta_0) \left(\cot^2 (\theta_0) + 1\right) + c_t f_1 \right) \omega^3}{ (\chi + \tilde \chi + t\omega - \cot (\theta_0 ) \omega + \sqrt{-1} \partial\bar\partial\varphi_1)^3} - 1
	\\
	&\geq 
	- 3 (1 - r)  \left( \cot^2 (\theta_0) + 1\right) \dfrac{ (\chi + \tilde \chi + t\omega - \cot (\theta_0 ) \omega + \sqrt{-1} \partial\bar\partial\varphi_2) \wedge \omega^2}{ (\chi + \tilde \chi + t\omega - \cot (\theta_0 ) \omega + \sqrt{-1} \partial\bar\partial\varphi_2)^3}
	\\
	&\qquad 
	- 3 r \left( \cot^2 (\theta_0) + 1\right)  \dfrac{(\chi - \cot (\theta_0) \omega + \hat\chi) \wedge \omega^2}{(\chi - \cot (\theta_0) \omega + \hat\chi)^3}
	\\
	&\geq 
	- (1 - r) 
	- 3 r \left( \cot^2 (\theta_0) + 1\right)  \dfrac{(\chi - \cot (\theta_0) \omega + \hat\chi) \wedge \omega^2}{(\chi - \cot (\theta_0) \omega + \hat\chi)^3}
	,
\end{aligned}
\end{equation}
where $\hat\chi := \tilde \chi + t \omega + \dfrac{1}{r} \sqrt{-1} \partial\bar\partial \varphi_1 - \dfrac{1 - r}{r} \sqrt{-1} \partial\bar\partial \varphi_2 \geq 0$. 
Then
\begin{equation}
	\dfrac{  c_t f_1  +  C (\chi,\omega,\theta_0) }{r^4} \omega^3 
	\geq 
	\hat\chi^3
	.
\end{equation}

Similar to the arguments in Section~\ref{stability} and Section~\ref{limiting-function}, we can derive the following results. First, we have a stability estimate.
\begin{theorem}
Let $\varphi_1$ and $\varphi_2$ be solutions to Equation~\eqref{equation-7-4} and \eqref{equation-7-5} respectively. For $q > 1$, we have
\begin{equation}
	\sup_M (\varphi_2 - \varphi_1) 
	\leq 
	2 \left( - U_t + C (\chi,\tilde \chi, \omega, q, \Vert f_1\Vert_{L^q}) \right) \Vert (\varphi_2 - \varphi_1)^+ \Vert^{\frac{1}{6}}_{L^{q^*}} ,
\end{equation}
where $q^* = \frac{q}{q - 1}$.

\end{theorem}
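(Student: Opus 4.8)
\emph{Proof plan.} The plan is to transcribe the De Giorgi iteration of Section~\ref{stability} into the complex Monge-Amp\`ere--type setting \eqref{equation-5-3}, with one structural simplification and one bookkeeping change. The simplification: the boundary case condition supplies the K\"ahler form $\chi-\cot(\theta_0)\omega>0$, which takes over the role that the intermediate datum $s_1(\tilde\chi+t\omega)+\sqrt{-1}\partial\bar\partial v_t$ played in Section~\ref{stability}; since this form carries no potential, no auxiliary function $v_t$ is needed, and the sublevel sets become $M_s:=\{\frac{1-r}{r}\varphi_2-\frac1r\varphi_1+U_t\ge s\}$. The bookkeeping change: the role of \eqref{cone-4-16} is now played by the cone estimate
\[
\hat\chi^3\le\frac{c_tf_1+C(\chi,\omega,\theta_0)}{r^4}\,\omega^3,\qquad \hat\chi:=\tilde\chi+t\omega+\tfrac1r\sqrt{-1}\partial\bar\partial\varphi_1-\tfrac{1-r}{r}\sqrt{-1}\partial\bar\partial\varphi_2\ge0,
\]
already obtained above from the argument of \cite{Sun202210}; it carries $r^4=r^{n+1}$ in the denominator, one power of $r$ more than the $r^n$ of Section~\ref{stability}, and this is exactly what turns the final H\"older exponent $\frac1{n+2}$ into $\frac1{n+3}=\frac16$.

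Concretely, for fixed $0<t\le1$ and $r\in(0,\frac12)$ I would solve the auxiliary complex Monge-Amp\`ere equation
\[
\bigl(\tilde\chi+t\omega+\sqrt{-1}\partial\bar\partial\psi_{s,k}\bigr)^3=\frac{\tau_k\bigl(\frac{1-r}{r}\varphi_2-\frac1r\varphi_1+u_\beta-s\bigr)}{A_{s,k,\beta}}\,\frac{c_tf_1+C}{r^4}\,\omega^3,\qquad\sup_M\psi_{s,k}=0,
\]
with $u_\beta$ Berman's smooth approximant of $U_t$ (Lemma~\ref{lemma-Berman-approximation}) and $A_{s,k,\beta}$ the associated normalizing constant, and study the comparison function
\[
\Phi:=-\Bigl(\tfrac43A^{1/3}_{s,k,\beta}\bigl(-\psi_{s,k}+u_\beta+1\bigr)+A^{4/3}_{s,k,\beta}\Bigr)^{3/4}+\tfrac{1-r}{r}\varphi_2-\tfrac1r\varphi_1+u_\beta-s
\]
for $s\ge K_0$, exactly as in \eqref{inequality-4-20}--\eqref{inequality-4-25}. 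If a maximum of $\Phi$ lies in $M\setminus\mathring M_s$ then $\Phi\le\|u_\beta-U_t\|_{L^\infty}$ trivially; if it lies in $\mathring M_s$, then $\sqrt{-1}\partial\bar\partial\Phi\le0$ forces, via the decomposition \eqref{equality-4-7} specialized to $s_1=0$ together with $\tilde\chi+t\omega+\sqrt{-1}\partial\bar\partial u_\beta\ge0$, both $\hat\chi\ge0$ at that point (so the cone estimate applies there) and $\hat\chi^3\ge(\cdots)A_{s,k,\beta}(\tilde\chi+t\omega+\sqrt{-1}\partial\bar\partial\psi_{s,k})^3$, and combining these with the cone estimate again gives $\Phi\le\|u_\beta-U_t\|_{L^\infty}$. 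Letting $\beta\to+\infty$ and then $k\to\infty$ yields the pointwise comparison and, through the H\"ormander--Tian $\alpha$-invariant estimate \cite{Hormander}\cite{Tian1987}, the exponential integrability
\[
\int_{M_s}\exp\!\Bigl(\alpha_0A_s^{-1/3}\bigl(\tfrac{1-r}{r}\varphi_2-\tfrac1r\varphi_1+U_t-s\bigr)^{4/3}\Bigr)\omega^3\le Ce^{\alpha_0A_s}.
\]

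From here I would follow Section~\ref{stability} line by line. H\"older's inequality with respect to $\omega^3$ together with the fact that $\frac{1-r}{r}(\varphi_2-\varphi_1)\ge s+\varphi_1-U_t\ge0$ on $M_s$ (using $-\varphi_1+U_t<K_0$ from Section~\ref{L-estimate}) gives $A_s\le\frac1{V_tr^{n+2}}\|(\varphi_2-\varphi_1)^+\|_{L^{q^*}}\|c_tf_1+C\|_{L^q}$, hence a $t$-independent bound $E_t$ once one puts $r:=\|(\varphi_2-\varphi_1)^+\|_{L^{q^*}}^{1/(n+3)}$ (valid when this quantity is below a fixed small constant; $c_t$ is bounded because $f_1\in L^q$, $q>1$). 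Feeding the exponential integrability back through the generalized Young inequality and the logarithmic decay estimate of Guo--Phong~\cite{GP2022}\cite{GP202207}, which relies on $(\chi+\tilde\chi+t\omega+\sqrt{-1}\partial\bar\partial\varphi_t)\wedge\omega^2\ge0$ (Wang--Yuan, valid here too), Lemma~\ref{lemma-2-2} produces a $t$-independent bound for $\frac{1-r}{r}\varphi_2-\frac1r\varphi_1+U_t$; rearranging with $\sup_M\varphi_1=\sup_M\varphi_2=0$, $U_t\ge U$, and combining with the trivial bound $\varphi_2-\varphi_1\le-\varphi_1\le-U_t+K_0$ that handles the complementary regime $\|(\varphi_2-\varphi_1)^+\|_{L^{q^*}}\ge2^{-(n+3)}$, gives precisely
\[
\sup_M(\varphi_2-\varphi_1)\le2\bigl(-U_t+C(\chi,\tilde\chi,\omega,q,\|f_1\|_{L^q})\bigr)\|(\varphi_2-\varphi_1)^+\|_{L^{q^*}}^{1/6}.
\]
The main obstacle is the $t$-uniformity of all constants as $t\to0^+$: since $[\tilde\chi+t\omega]$ degenerates to the merely nef and big class $[\tilde\chi]$, the volume $V_t=\int_M(\tilde\chi+t\omega)^3$ and Berman's approximant deteriorate, so $E_t$ must be controlled exactly as in \eqref{bound-4-21}--\eqref{inequality-4-35}, using the $t$-uniform $L^\infty$ estimate of Section~\ref{L-estimate} and $U\in L^p(M)$ for every $p\ge1$. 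The delicate point is keeping the $\ln^p$-terms governed by $\|f_1\|_{L^q}$ (rather than merely $\|f_1\|_{L^1}$) throughout the iteration in the present $(\det)=(\text{linear in the metric})+(\text{constant})+c_tf$ format; apart from that the argument is a mechanical specialization of Section~\ref{stability} to $n=3$, $s_1=0$, with the cone estimate $\hat\chi^3\le r^{-4}(c_tf_1+C)\omega^3$ in place of \eqref{cone-4-16}.
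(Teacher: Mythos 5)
Your proposal is correct and follows exactly the route the paper intends: the paper's own proof consists of the cone estimate $\hat\chi^3\le r^{-4}(c_tf_1+C)\omega^3$ derived from the Monge--Amp\`ere rewriting in Section~6.1, plus the words ``Similar to the arguments in Section~\ref{stability}\ldots,'' and your transcription fills in precisely the intended specialization. You have correctly identified the two points that matter --- that the K\"ahler form $\chi-\cot(\theta_0)\omega>0$ from the boundary condition replaces the intermediate solution $v_t$ (so the subsection establishing the $K_2$-bound is unnecessary, with $-\varphi_1+U_t<K_0$ from Section~\ref{L-estimate} doing its job), and that the extra power $r^{4}=r^{n+1}$ in the cone estimate shifts the final De~Giorgi exponent from $\frac1{n+2}$ to $\frac1{n+3}=\frac16$.
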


%
%

Second, a limit function can be constructed through the stability estimate.
%
While $U$ is bounded, the limit function is indeed a weak solution in pluripotential sense. 

\medskip

\subsection{Case 2: $\theta_0 \in (\pi/2,\pi)$, i.e. supercritical but not hypercritical phase case}
In this case, $\cot (\theta_0) < 0$. 
We shall utilize the following concavity property~\cite{HuiskenSinestrari}.
\begin{theorem}
The cone $\Gamma^k \subset \mathbb{R}^n$ is convex and the function $\frac{S_{k + 1}}{S_k}$ is concave on $\Gamma^k$ for  $1 \leq k \leq  n -1$.
\end{theorem}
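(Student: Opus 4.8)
The plan is to establish the two halves — convexity of $\Gamma^k$ and concavity of $S_{k+1}/S_k$ — by quite different means. For convexity of $\Gamma^k$ I would appeal to G\aa rding's theory of hyperbolic polynomials: $S_k$ is hyperbolic with respect to $e=(1,\dots,1)$, i.e. for every $\bm\lambda$ the polynomial $t\mapsto S_k(\bm\lambda+te)$ has only real roots; its G\aa rding cone (the connected component of $\{S_k>0\}$ containing $e$) is then automatically open and convex, and a routine interlacing argument identifies it with $\{S_1>0,\dots,S_k>0\}=\Gamma^k$. So the substance of the statement is the concavity of $\mathfrak{q}:=S_{k+1}/S_k$ on this comparatively large cone; note $\Gamma^{k+1}\subsetneq\Gamma^k$, so this is genuinely stronger than the concavity of $S_{k+1}^{1/(k+1)}$ on $\Gamma^{k+1}$.

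Since $\mathfrak{q}$ is positively $1$-homogeneous and $\Gamma^k$ is a convex cone, concavity is equivalent to $D^2\mathfrak{q}(\bm\lambda)\le 0$ at every $\bm\lambda\in\Gamma^k$. Writing $p=S_k(\bm\lambda)$, $q=S_{k+1}(\bm\lambda)$ and letting a dot denote the directional derivative along a fixed $\bm\eta$, the quotient rule gives
\begin{equation*}
p^{3}\,\langle D^2\mathfrak{q}\,\bm\eta,\bm\eta\rangle
= p^{2}\ddot q - p\,q\,\ddot p - 2\,p\,\dot p\,\dot q + 2\,q\,\dot p^{2},
\end{equation*}
so the entire problem is to show this is $\le 0$ for all $\bm\eta$. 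The tools I would bring to bear are: (i) the derivative identities $\partial_iS_m(\bm\lambda)=S_{m-1}(\bm\lambda_{\hat i})$ and $\partial_i\partial_jS_m(\bm\lambda)=S_{m-2}(\bm\lambda_{\widehat{ij}})$ for $i\neq j$ (deleting coordinates), together with the structural fact $\bm\lambda\in\Gamma^k_n\Rightarrow\bm\lambda_{\hat i}\in\Gamma^{k-1}_{n-1}$, which in particular forces $\partial_iS_k=S_{k-1}(\bm\lambda_{\hat i})>0$; (ii) the G\aa rding concavity of $S_k^{1/k}$ on $\Gamma^k$, equivalently $p\,\ddot p\le\frac{k-1}{k}\dot p^{2}$; (iii) the Newton--MacLaurin inequalities (valid for all real vectors), which, using the log-concavity of binomial coefficients, yield $S_k(\bm\lambda_{\hat i})^{2}\ge S_{k+1}(\bm\lambda_{\hat i})\,S_{k-1}(\bm\lambda_{\hat i})$ and similar comparisons among consecutive $S_m$.

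A useful first step, which I would dispatch quickly, is the diagonal part: restricting $\mathfrak{q}$ to a single coordinate line $\lambda_i\mapsto\mathfrak{q}(\bm\lambda)$ produces a M\"obius function $(\alpha_i\lambda_i+\beta_i)/(\gamma_i\lambda_i+\delta_i)$ with $\gamma_i=S_{k-1}(\bm\lambda_{\hat i})>0$, $\gamma_i\lambda_i+\delta_i=p>0$, and $\alpha_i\delta_i-\beta_i\gamma_i=S_k(\bm\lambda_{\hat i})^{2}-S_{k+1}(\bm\lambda_{\hat i})S_{k-1}(\bm\lambda_{\hat i})\ge 0$ by (iii); since $\frac{d^2}{dt^2}\frac{at+b}{ct+d}=-\frac{2c(ad-bc)}{(ct+d)^{3}}$, this gives $\partial_{ii}\mathfrak{q}\le0$, i.e. separate concavity in each variable. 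The main obstacle is the passage from separate to \emph{joint} concavity: the cross derivatives enter $\ddot q$ and $\dot p\,\dot q$, and because $S_{k+1}$ (and various restricted quantities such as $S_k(\bm\lambda_{\hat i})$) need not be positive on $\Gamma^k$, one cannot short-circuit the argument by invoking a known G\aa rding concavity. I expect the decisive move to be an induction on $n$ — peeling off one variable at a time via $\bm\lambda_{\hat i}\in\Gamma^{k-1}_{n-1}$ — coupled with a sharpened, $\Gamma^k$-adapted Newton-type inequality forcing the four terms in the displayed identity to combine with a favourable sign, the sign bookkeeping across all cases being the technical core; this is precisely the content of \cite{HuiskenSinestrari}.
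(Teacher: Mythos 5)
The paper does not prove this theorem: it states it, cites \cite{HuiskenSinestrari}, and records that the author learned of it in a conversation with Guan and Yuan. There is therefore no in-paper argument to compare against, and the honest conclusion is that you and the paper end up in the same place, namely invoking the same external reference.

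That said, your sketch is structurally sound for the portion it actually carries out. The G\aa rding-hyperbolicity argument for the convexity of $\Gamma^k$ is the standard one and is correct. Reducing concavity of the $1$-homogeneous ratio $\mathfrak{q}=S_{k+1}/S_k$ on the convex cone $\Gamma^k$ to $D^2\mathfrak{q}\le 0$ is correct. The M\"obius-function computation in one variable --- with $\gamma_i=S_{k-1}(\bm\lambda_{\hat i})>0$ because $\bm\lambda_{\hat i}\in\Gamma^{k-1}_{n-1}$, $\gamma_i\lambda_i+\delta_i=S_k(\bm\lambda)>0$, and $\alpha_i\delta_i-\beta_i\gamma_i=S_k(\bm\lambda_{\hat i})^2-S_{k+1}(\bm\lambda_{\hat i})S_{k-1}(\bm\lambda_{\hat i})\ge 0$ by Newton--MacLaurin --- does establish $\partial_{ii}\mathfrak{q}\le 0$. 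Items (i), (ii), (iii) in your toolkit are all correctly stated. However, you deliberately stop at the same point the paper stops: the passage from separate concavity to joint concavity, i.e.\ the favourable sign of $p^{2}\ddot q - p\,q\,\ddot p - 2\,p\,\dot p\,\dot q + 2\,q\,\dot p^{2}$ for a general direction $\bm\eta$, is not derived; you flag it as "the technical core" and defer to \cite{HuiskenSinestrari}. Since on $\Gamma^k$ (as opposed to $\Gamma^{k+1}$) the quantities $S_{k+1}(\bm\lambda)$ and $S_{k+1}(\bm\lambda_{\hat i})$, $S_k(\bm\lambda_{\widehat{ij}})$ may change sign, this off-diagonal control genuinely cannot be short-circuited by (ii) and (iii) alone, and the inductive estimate you gesture at is not supplied. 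So as a standalone proof there is a real gap exactly at the step you identify, while as a comparison against the paper the two of you agree --- including in not proving it.
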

The author was told about the theorem in a meeting with Bo Guan and Rirong Yuan. 
To apply the theorem, we impose an extra assumption that $\bm{\lambda} (\chi) \in \bar\Gamma^2$ in the following argument. It is easy to see that for all $ t > 0$, $\bm{\lambda} (\chi + t\omega) \in \Gamma^2$.  

We shall do a decomposition as follows,
\begin{equation}
\begin{aligned}
&\quad
\chi + \tilde \chi + t\omega + \sqrt{-1} \partial\bar\partial \varphi_1
\\
&=
(1 - r) \left(\chi + \tilde \chi + t \omega  + \sqrt{-1} \partial\bar\partial \varphi_2\right) 
+ r \left( \chi + \frac{t}{2} \omega + \hat\chi  \right)
,
\end{aligned}
\end{equation}
where $0 < r < 1$ and
\begin{equation}
\hat \chi := \tilde \chi + \frac{t}{2} \omega  + \frac{1}{r} \sqrt{-1} \partial\bar\partial \varphi_1 - \frac{1 - r}{r} \sqrt{-1} \partial\bar\partial \varphi_2 .
\end{equation}
Wherever $\hat\chi \geq 0$, we can derive that
\begin{equation}
\begin{aligned}
&\quad
3 \cot (\theta_0) + \left(c_t f_1 - \cot (\theta_0)\right) \frac{\omega^3}{r^2 \left(\chi + \frac{t}{2} \omega + \hat\chi\right)^2 \wedge \omega}
\\
&\geq
3 \cot (\theta_0) + \left(c_t f_1 - \cot (\theta_0)\right) \frac{\omega^3}{(\chi + \tilde \chi + t\omega + \sqrt{-1} \partial\bar\partial \varphi_1)^2 \wedge \omega}
\\
&\geq
(1 - r) \frac{(\chi + \tilde \chi + t\omega + \sqrt{-1} \partial\bar\partial \varphi_2)^3}{(\chi + \tilde \chi + t\omega + \sqrt{-1} \partial\bar\partial \varphi_2)^2 \wedge \omega} 
- 
3 (1 - r) \frac{(\chi + \tilde \chi + t\omega + \sqrt{-1} \partial\bar\partial \varphi_2) \wedge \omega^2}{(\chi + \tilde \chi + t\omega + \sqrt{-1} \partial\bar\partial \varphi_2)^2 \wedge \omega}
\\
&\qquad
+
r \frac{\left(\chi + \frac{t}{2} \omega + \hat\chi\right)^3}{\left(\chi + \frac{t}{2} \omega + \hat\chi\right)^2 \wedge \omega } - 3 r \frac{\left(\chi + \frac{t}{2} \omega + \hat\chi\right) \wedge \omega^2}{\left(\chi + \frac{t}{2} \omega + \hat\chi\right)^2 \wedge \omega}
\\
&\geq 
(1 - r)  3 \cot (\theta_0) 
+
r \frac{\left(\chi + \frac{t}{2} \omega + \hat\chi\right)^3}{\left(\chi + \frac{t}{2} \omega + \hat\chi\right)^2 \wedge \omega } - 3 r \frac{\left(\chi + \frac{t}{2} \omega + \hat\chi\right) \wedge \omega^2}{\left(\chi + \frac{t}{2} \omega + \hat\chi\right)^2 \wedge \omega}
,
\end{aligned}
\end{equation}
and hence
\begin{equation}
\begin{aligned}
&\quad
	3 \cot (\theta_0) \left(\chi + \frac{t}{2} \omega + \hat\chi\right)^2 \wedge \omega + \frac{c_t f_1 - \cot (\theta_0)}{r^3} \omega^3
	\\
	&\geq 
	\left(\chi + \frac{t}{2} \omega + \hat\chi\right)^3 - 3  \left(\chi + \frac{t}{2} \omega + \hat\chi\right) \wedge \omega^2  .
\end{aligned}
\end{equation}
By the boundary case condition,
\begin{equation}
\begin{aligned}
&\quad
\frac{c_t f_1 - \cot (\theta_0)}{r^3}  \omega^3 + \cot (\theta_0) \omega^3
\\
	&\geq \mathfrak{Re} \left(\chi + \frac{t}{2} \omega + \hat\chi + \sqrt{-1} \omega\right)^3 - \cot(\theta_0) \mathfrak{Im} \left(\chi + \frac{t}{2} \omega + \hat\chi + \sqrt{-1} \omega\right)^3 
	\\
	&\geq 
	\left(\hat\chi + \frac{t}{2}\omega\right)^3 + \chi^3 - 3 \chi \wedge \omega^2 
	- 3 \cot (\theta_0) \chi^2 \wedge \omega + \cot (\theta_0) \omega^3
	.
\end{aligned}
\end{equation}
Therefore,
\begin{equation}
\frac{c_t f_1 - \cot (\theta_0) + C(\chi,\omega)}{r^3}
	\geq
	 \frac{c_t f_1 - \cot (\theta_0)}{r^3}   \omega^3 + 3 \chi \wedge \omega^2 
	 \geq 
	\hat\chi^3 
	.
\end{equation}

Similar to the arguments in Section~\ref{stability} and Section~\ref{limiting-function}, we can derive the following results. First, we have a stability estimate.
\begin{theorem}
Let $\varphi_1$ and $\varphi_2$ be solutions to Equation~\eqref{equation-7-4} and \eqref{equation-7-5} respectively. For $q > 1$, we have
\begin{equation}
	\sup_M (\varphi_2 - \varphi_1) 
	\leq 
	2 \left( - U_t + C (\chi,\tilde \chi, \omega, q, \Vert f_1\Vert_{L^q}) \right) \Vert (\varphi_2 - \varphi_1)^+ \Vert^{\frac{1}{5}}_{L^{q^*}} ,
\end{equation}
where $q^* = \frac{q}{q - 1}$.

\end{theorem}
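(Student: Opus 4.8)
The plan is to run the De Giorgi iteration of Section~\ref{stability} essentially verbatim, the new conceptual input being the pointwise cone bound $\hat\chi^3\le r^{-3}\bigl(c_t f_1-\cot(\theta_0)+C(\chi,\omega)\bigr)\,\omega^3$, valid wherever $\hat\chi:=\tilde\chi+\tfrac{t}{2}\omega+\tfrac{1}{r}\sqrt{-1}\,\partial\bar\partial\varphi_1-\tfrac{1-r}{r}\sqrt{-1}\,\partial\bar\partial\varphi_2\ge0$, which has just been derived from the concavity of $S_3/S_2$ on $\bar\Gamma^2$ and the boundary case condition; the hypothesis $\bm{\lambda}(\chi)\in\bar\Gamma^2$ is precisely what makes the two forms being interpolated, and their convex combinations, lie in $\Gamma^2$, where that concavity is available. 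Since $[\tilde\chi+\tfrac{t}{2}\omega]$ is big and nef, I would first apply Lemma~\ref{lemma-Berman-approximation} to the envelope of this class — which I continue to denote $U_t$, abusing notation, since only $U\le U_t\le0$ enters the conclusion — to produce smooth approximants $u_\beta$; then, for $s$ beyond the $L^\infty$ threshold of Section~\ref{L-estimate} and for the truncations $\tau_k$, I would solve the auxiliary complex Monge-Amp\`ere equation
\[
\left(\tilde\chi+\tfrac{t}{2}\omega+\sqrt{-1}\,\partial\bar\partial\psi_{s,k}\right)^3=\frac{\tau_k\left(\tfrac{1-r}{r}\varphi_2-\tfrac{1}{r}\varphi_1+u_\beta-s\right)}{A_{s,k,\beta}}\cdot\frac{c_t f_1+C}{r^3}\,\omega^3,\qquad\sup_M\psi_{s,k}=0,
\]
with $A_{s,k,\beta}$ the mass-normalizing constant, exactly as in Section~\ref{stability}.

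The second step is to compare $\psi_{s,k}$ with the test function
\[
-\left(\frac{n+1}{n}A^{1/n}_{s,k,\beta}\left(-\psi_{s,k}+u_\beta+1\right)+A^{(n+1)/n}_{s,k,\beta}\right)^{\frac{n}{n+1}}+\frac{1-r}{r}\varphi_2-\frac{1}{r}\varphi_1+u_\beta-s ,
\]
using the cone bound at points where $\hat\chi\ge0$ and the maximum principle elsewhere; letting $\beta\to\infty$ and then $k\to\infty$ yields, on $M_s:=\{\tfrac{1-r}{r}\varphi_2-\tfrac{1}{r}\varphi_1+U_t\ge s\}$, an estimate of the shape $\tfrac{1-r}{r}\varphi_2-\tfrac{1}{r}\varphi_1+U_t-s\le\bigl(\tfrac{n+1}{n}A^{1/n}_s(-\psi_{s,k}+U_t+1)+A^{(n+1)/n}_s\bigr)^{\frac{n}{n+1}}$. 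Feeding this into the Skoda--H\"ormander--Tian exponential integrability estimate, then into the generalized Young inequality and into H\"older's inequality against the measure $(c_t f_1+C)\,\omega^n$, produces the iteration inequality $s'\,\phi(s+s')\le C\,\phi(s)^{1+\frac{1}{n}-\frac{1}{p}}$ for $\phi(s):=\int_{M_s}(c_t f_1+C)\,\omega^n$ and any $p>n=3$; Lemma~\ref{lemma-2-2} then bounds $\tfrac{1-r}{r}\varphi_2-\tfrac{1}{r}\varphi_1+U_t$ in terms of $\bigl(\int_M(c_t f_1+C)\,\omega^n\bigr)^{\frac{1}{n}-\frac{1}{p}}$ and a coefficient $E_t$, which one checks is bounded uniformly for $t\in(0,1]$ from the estimate $-\varphi_1+U_t<K_0$ of Section~\ref{L-estimate} and the boundedness of $c_t$ — this is where $f_1\in L^q$, $q>1$, enters, also ensuring $c_t f_1\ln^p(1+c_t f_1)$ is integrable.

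Finally, working in the regime where $\Vert(\varphi_2-\varphi_1)^+\Vert_{L^{q^*}}$ is small — the complementary case being handled trivially as in Section~\ref{stability} — I would set $r:=\Vert(\varphi_2-\varphi_1)^+\Vert^{\frac{1}{n+2}}_{L^{q^*}}=\Vert(\varphi_2-\varphi_1)^+\Vert^{\frac{1}{5}}_{L^{q^*}}$, insert the resulting bound on $A_s$ into Lemma~\ref{lemma-2-2}, and rearrange $\tfrac{1-r}{r}\varphi_2-\tfrac{1}{r}\varphi_1+U_t\le C$ into $\varphi_2-\varphi_1\le 2(-U_t+C)\,r$, the claimed estimate (and, since $U\le U_t$, one may afterwards replace $-U_t$ by $-U$ for use in Section~\ref{limiting-function}). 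I expect the main technical point to be the accounting of the powers of $r$: because $\cot(\theta_0)<0$ there is no convenient rewriting of the equation in complex Monge-Amp\`ere form, so one must use the plain splitting $\chi+\tfrac{t}{2}\omega+\hat\chi$, which makes the cone bound carry exactly $r^{-3}$ instead of the $r^{-4}$ of the hypercritical case, and tracking this weight through the Skoda estimate and the De Giorgi iteration is precisely what produces the exponent $\tfrac{1}{5}$ in place of $\tfrac{1}{6}$; the rest is a faithful repetition of the $n\ge4$ argument of Section~\ref{stability}.
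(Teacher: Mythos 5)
Your proposal is correct and follows essentially the same route as the paper: the decomposition $\chi+\tilde\chi+t\omega+\sqrt{-1}\partial\bar\partial\varphi_1=(1-r)(\chi+\tilde\chi+t\omega+\sqrt{-1}\partial\bar\partial\varphi_2)+r(\chi+\tfrac{t}{2}\omega+\hat\chi)$ with no intermediate function $v_t$, the concavity of $S_3/S_2$ on $\Gamma^2$ together with the extra hypothesis $\bm\lambda(\chi)\in\bar\Gamma^2$ to place both interpolants in that cone, the pointwise cone bound $\hat\chi^3\le r^{-3}(c_tf_1-\cot\theta_0+C)\,\omega^3$, and then the auxiliary Monge--Amp\`ere/De~Giorgi machinery of Section~\ref{stability} run on the class $[\tilde\chi+\tfrac{t}{2}\omega]$, with the choice $r=\Vert(\varphi_2-\varphi_1)^+\Vert_{L^{q^*}}^{1/5}$. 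Your bookkeeping of how the $r^{-3}$ (rather than the $r^{-4}$ of the hypercritical case) propagates through $A_s$ to give the exponent $\tfrac{1}{5}$ instead of $\tfrac{1}{6}$ is exactly right, and your observation that the threshold for the iteration can be taken from the $L^\infty$ bound $-\varphi_1+U_t<K_0$ of Section~\ref{L-estimate} correctly replaces the role played by the intermediate function in the $n\ge4$ case.
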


Second, a limit function can be constructed through the stability estimate.
%
While $U$ is bounded, the limit function is indeed a weak solution in pluripotential sense. 

\medskip

\noindent
{\bf Acknowledgements}\quad
The author wish to thank  
Gao Chen, 
Bo Guan 
and
Rirong Yuan
for helpful discussions.  
The author is supported by 
National Natural Science Foundation of China (12371207) 
and
a start-up grant from ShanghaiTech University (2018F0303-000-04).

\medskip

\end{document}